\newsavebox{\@brx}
\newcommand{\llangle}[1][]{\savebox{\@brx}{\(\m@th{#1\langle}\)}%
  \mathopen{\copy\@brx\kern-0.5\wd\@brx\usebox{\@brx}}}
\newcommand{\rrangle}[1][]{\savebox{\@brx}{\(\m@th{#1\rangle}\)}%
  \mathclose{\copy\@brx\kern-0.5\wd\@brx\usebox{\@brx}}}
\newcommand{\subjclass}[2][1991]{%
  \let\@oldtitle\@title%
  \gdef\@title{\@oldtitle\footnotetext{#1 \emph{Mathematics subject classification.} #2}}%
}
\newcommand{\keywords}[1]{%
  \let\@@oldtitle\@title%
  \gdef\@title{\@@oldtitle\footnotetext{\emph{Key words and phrases.} #1.}}%
}
\newtheorem{theorem}{Theorem}[section]
\newtheorem*{theorem*}{Theorem}
\newtheorem{lemma}[theorem]{Lemma}
\newtheorem{proposition}[theorem]{Proposition}
\newtheorem{definition}[theorem]{Definition}
\newtheorem{remark}[theorem]{Remark}
\numberwithin{equation}{section}
\begin{document}
\title {Many $p$-adic odd zeta values are irrational}

\author{Li Lai, Johannes Sprang\thanks{The second author was supported by the DFG grant: SFB 1085 “Higher invariants”.}}
\date{}
\subjclass[2020]{11J72 (primary), 11M06, 33C20 (secondary)}

\maketitle

\begin{abstract}
For any prime $p$ and $\varepsilon>0$ we prove that for any sufficiently large positive odd integer $s$ at least $(c_p-\varepsilon) \sqrt{\frac{s}{\log s}}$ of the $p$-adic zeta values $\zeta_p(3),\zeta_p(5),\dots,\zeta_p(s)$ are irrational. The constant $c_p$ is positive and does only depend on $p$. This result establishes a $p$-adic version of the elimination technique used by Fischler--Sprang--Zudilin and Lai--Yu to prove a similar result  on classical zeta values. The main difficulty consists in proving the non-vanishing of the resulting linear forms. We overcome this problem by using a new irrationality criterion. 
\end{abstract}


\section{Introduction}

A well-known formula of Euler shows that the values of the Riemann zeta function at positive even integers are all non-zero rational multiples of powers of $\pi$. More precisely, Euler has shown for all positive integers $n$ the formula
\[
	\zeta(2n)= -\frac{(2\pi \sqrt{-1})^{2n}}{2(2n)!}B_{2n},
\]
where $B_{2n}$ are the Bernoulli numbers. In particular, all even zeta values are transcendental numbers. This result raises immediately the question about the nature of the odd zeta values $\zeta(2n+1)$. Although we expect that the odd zeta values are transcendental as well, we do not know the transcendence of $\zeta(2n+1)$ for a single value of $n$ at the moment. Nevertheless, a first step in this direction has been obtained by Ap\'ery in \cite{Ap79}, who has established the irrationality of $\zeta(3)$. A major breakthrough was the celebrated theorem of Rivoal and Ball--Rivoal:
\begin{theorem*}[Rivoal, Ball--Rivoal, \cite{Riv00,BR01}]
	For any $\varepsilon>0$ and a sufficiently large odd positive integer $s$
	\[
		\dim_{\mathbb{Q}} \left( \mathbb{Q}+ \zeta(3)\mathbb{Q}+ \zeta(5)\mathbb{Q}+\dots + \zeta(s)\mathbb{Q} \right)\geqslant \frac{1-\varepsilon}{1+\log 2} \log s.
	\]
\end{theorem*}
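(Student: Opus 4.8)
The plan is to construct, for each integer $n\geqslant 1$, an explicit and very small $\mathbb{Q}$-linear form in $1,\zeta(3),\zeta(5),\dots,\zeta(s)$, to control both the denominators and the archimedean size of its coefficients, to verify that it does not vanish, and then to apply Nesterenko's linear independence criterion to the resulting family of integer linear forms. For the construction, fix an auxiliary integer $r$ with $2r<s$ (to be optimized at the very end, and allowed to grow with $s$), and for $n\geqslant 1$ set
\[
S_n \;=\; n!^{\,s-2r}\sum_{t=1}^{\infty}\Bigl(t+\tfrac n2\Bigr)\,\frac{(t-rn)_{rn}\,(t+n+1)_{rn}}{(t)_{n+1}^{\,s}},
\]
where $(x)_k=x(x+1)\cdots(x+k-1)$ and the series converges because $2r<s$. (Equivalently $S_n$ admits a Beukers-type multiple-integral representation, but the series is the most transparent for the arithmetic.) Expanding the summand into partial fractions in the variable $t$ and summing term by term gives $S_n=a_{0,n}+\sum_{j=2}^{s}a_{j,n}\zeta(j)$ with $a_{j,n}\in\mathbb{Q}$, the $j=1$ contributions combining to a rational number since the rational function in $t$ has degree $\leqslant -2$. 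Deleting the factor $t+\tfrac n2$ produces a rational function invariant under the involution $t\mapsto -t-n$, so the full summand is \emph{anti}-invariant under it (for $n$ odd, which we may assume); a standard symmetry argument then forces $a_{j,n}=0$ for every \emph{even} $j$, leaving
\[
S_n \;=\; a_{0,n}+a_{3,n}\zeta(3)+a_{5,n}\zeta(5)+\cdots+a_{s,n}\zeta(s),
\]
and $s$ being odd guarantees that $\zeta(s)$ genuinely occurs.

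Next one must pin down the arithmetic of the coefficients, their size, the size of $S_n$, and non-vanishing. Bounding the partial-fraction coefficients via divisibility properties of binomial coefficients together with the prefactor $n!^{\,s-2r}$ --- included precisely to clear extra denominators --- one shows $2\,d_n^{\,s}\,a_{j,n}\in\mathbb{Z}$ for all $j$, where $d_n=\operatorname{lcm}(1,2,\dots,n)=e^{\,n+o(n)}$ by the prime number theorem; hence $\widetilde L_n:=2\,d_n^{\,s}\,S_n$ is an integer linear form in $1,\zeta(3),\dots,\zeta(s)$. A Laplace / saddle-point analysis of the series (or of the integral) gives
\[
\lim_{n\to\infty}\lvert S_n\rvert^{1/n}=e^{-f_0(s,r)},\qquad \limsup_{n\to\infty}\Bigl(\max_{j}\lvert a_{j,n}\rvert\Bigr)^{1/n}=e^{\,f_1(s,r)}
\]
for explicit positive quantities $f_0,f_1$, so that $\widetilde L_n$ has integer coefficients of size $e^{(f_1+s+o(1))n}$ and satisfies $\lvert\widetilde L_n\rvert=e^{-(f_0-s+o(1))n}$; one needs $f_0>s$ so that $\widetilde L_n\to 0$, and this holds for a suitable $r$ once $s$ is large. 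Non-vanishing is easy in the classical setting: the terms with $1\leqslant t\leqslant rn$ vanish identically (the Pochhammer $(t-rn)_{rn}$ then contains the factor $0$), while the terms with $t>rn$ are all strictly positive, so $S_n>0$ for every $n$. (It is exactly this non-vanishing step that becomes the central difficulty in the $p$-adic analogue treated in the present paper.)

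Finally, applying Nesterenko's linear independence criterion to the family of nonzero integer linear forms $\widetilde L_n$ yields a lower bound of the shape
\[
\dim_{\mathbb{Q}}\bigl(\mathbb{Q}+\zeta(3)\mathbb{Q}+\zeta(5)\mathbb{Q}+\cdots+\zeta(s)\mathbb{Q}\bigr)\;\geqslant\;1+\frac{f_0(s,r)-s}{\,f_1(s,r)+s\,}.
\]
It then remains to optimize the right-hand side over the admissible range of $r$ and to analyze its growth as $s\to\infty$: with the optimal choice of $r$ the right-hand side exceeds $\frac{1-\varepsilon}{1+\log 2}\log s$ for all sufficiently large odd $s$, the constant $1+\log 2$ being produced by the extra factor $t+\tfrac n2$ together with the symmetrization that annihilates the even zeta values. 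The main obstacle I expect is the analytic part: carrying the saddle-point analysis far enough to determine $f_0$ and $f_1$ sharply, together with the arithmetic refinement ensuring that $d_n^{\,s}$ (rather than a larger power) already clears all denominators, and the optimization bookkeeping in $r$; by contrast, non-vanishing is free from positivity in this classical case.
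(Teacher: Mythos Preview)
The paper does not prove this theorem; it is quoted in the introduction as background and attributed to \cite{Riv00,BR01}, so there is no in-paper proof to compare against. Your outline is a faithful sketch of the original Ball--Rivoal argument --- the well-poised series with the symmetrizing factor $t+\tfrac n2$, the involution $t\mapsto -t-n$ killing even zeta values, the arithmetic lemma $2d_n^{\,s}a_{j,n}\in\mathbb{Z}$, positivity for non-vanishing, and Nesterenko's criterion followed by optimization in $r$ --- and your parenthetical observation that non-vanishing is free here but becomes the central obstacle $p$-adically is exactly the difficulty the present paper is written to overcome.
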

In particular, this theorem implies that for sufficiently large $s$ there are at least $\frac{1-\varepsilon}{1+\log 2} \log s$ irrational numbers among $\zeta(3),\zeta(5),\dots, \zeta(s)$. Although the theorem of Rivoal and Ball--Rivoal  implies the irrationality of infinitely many odd zeta values,  $\zeta(3)$ remains the only particular zeta value which is known to be irrational. Nevertheless, a beautiful theorem of Zudilin says that at least one of $\zeta(5),\zeta(7), \zeta(9)$ and $\zeta(11)$ is irrational, see \cite{Zud01}.

Recently, there has been further progress on asymptotic results on the irrationality of odd zeta values. It was an important insight of Zudilin that one can use certain linear forms in Hurwitz zeta values to construct linear forms in zeta values with related coefficients, see \cite{Zud18}. Taking suitable linear combinations of such linear forms allows one to eliminate certain unwanted zeta values in the resulting linear forms. The elimination technique turned out to be very useful to improve the lower bound on the number of irrational odd zeta values:
\begin{theorem*}[Fischler--Sprang--Zudilin, \cite{FSZ2019}]
	For any $\varepsilon>0$ and $s$ a sufficiently large positive odd integer,  at least
	\[
		2^{(1-\varepsilon)\frac{\log s}{\log\log s}}
	\]
	of the numbers $\zeta(3),\zeta(5),\dots,\zeta(s)$ are irrational. 
\end{theorem*}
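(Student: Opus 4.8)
The plan is to adapt the hypergeometric machinery of Rivoal and Ball--Rivoal to the Hurwitz zeta function following Zudilin, and then to carry out an elimination indexed by the squarefree divisors of a primorial modulus. First I would fix $\varepsilon>0$ and let $q=p_1\cdots p_r$ be the product of the first $r$ odd primes, with $r$ as large as possible subject to $q\le s^{1-\varepsilon/2}$; by the prime number theorem the largest prime factor of $q$ is $\sim(1-\varepsilon/2)\log s$, so that $\omega(q)=r\sim(1-\varepsilon/2)\log s/\log\log s$ and hence $2^{\omega(q)}\ge 2^{(1-\varepsilon)\log s/\log\log s}$ once $s$ is large. Next, for each residue $a$ modulo $q$ and each large $n$, I would build a well-poised rational function $R_n(t)$, assembled from Pochhammer symbols in the variable $qt+a$ and of degree in $t$ comparable to $sn$, and consider $\sum_{m\ge0}R_n(qm+a)$; for a suitable choice of $R_n$ the partial-fraction expansion of this sum is an explicit $\mathbb{Q}$-linear combination of $1$ and the Hurwitz values $\zeta(k,a/q)$ with $2\le k\le s$. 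The archimedean size of such a form I would estimate by a saddle-point analysis of the associated complex integral, and its denominator by the usual $\operatorname{lcm}(1,\dots,qn)$-type bounds, so that --- exactly as in Ball--Rivoal, after optimizing the free parameters --- the archimedean decay outpaces the denominator growth.

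The arithmetic heart is the elimination. Combining the distribution relation $\sum_{a=1}^{q}\zeta(k,a/q)=q^{k}\zeta(k)$ with Zudilin's elimination trick --- which, carried out with a prime $p$, converts the construction into linear forms in the $\zeta(k)$ whose coefficient of $\zeta(k)$ vanishes unless $p$ divides $k$ (or, in a variant, unless $p$ does not divide $k$) --- I would iterate over the primes dividing $q$, making an independent such choice at each, to obtain for every squarefree divisor $d\mid q$ a genuine linear form $L_n^{(d)}$ in $1$ and exactly those $\zeta(k)$ with $3\le k\le s$ odd for which $\gcd(k,q)=d$. These $2^{\omega(q)}$ index sets partition the odd integers of $[3,s]$ into classes which are nonempty once $s$ is large relative to $q$, and the rescalings produced by the elimination contribute only $n$-independent factors $q^{O(s)}$, so each $L_n^{(d)}$ still decays at the archimedean place faster than its denominator grows. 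Consequently, for every $d$ at least one of the zeta values appearing in $L_n^{(d)}$ is irrational; since the classes are pairwise disjoint, this produces at least $2^{\omega(q)}\ge 2^{(1-\varepsilon)\log s/\log\log s}$ irrational numbers among $\zeta(3),\zeta(5),\dots,\zeta(s)$. Keeping the archimedean/denominator balance intact is what forces $q$ to be taken somewhat below $s$, whence the loss from $1$ to $1-\varepsilon$ in the exponent.

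The step I expect to be the main obstacle is the non-vanishing: one must show that for every squarefree $d\mid q$ the eliminated form $L_n^{(d)}$ does not vanish identically (for infinitely many $n$). Elimination is a drastic subtraction, so a priori all the surviving coefficients could cancel --- this is the difficulty flagged in the abstract. Classically I would attack it by realizing $L_n^{(d)}$, or a suitable determinant built from several such forms, as an explicit contour integral of the hypergeometric kernel, then using the product structure of $R_n$ and the arithmetic of the elimination to factor this integral and read off its leading behaviour as $n\to\infty$ (alternatively, via a closed-form Vandermonde- or Saalsch\"utz-type evaluation), thereby exhibiting a nonzero value for all large $n$. It is precisely this non-vanishing that resists a naive $p$-adic transcription --- there being no contour integrals available $p$-adically and the relevant series converging only under restrictive hypotheses --- so that in the $p$-adic setting one needs the new irrationality criterion announced in the abstract to take its place.
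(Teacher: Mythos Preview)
This theorem is merely cited in the paper, not proved there; but the paper's Proposition~\ref{prop:elimination} carries out the $p$-adic analogue of the very same elimination, so the intended mechanism is visible and can be compared with your sketch.

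The essential gap is in your description of the elimination. You claim that Zudilin's trick, iterated over the primes $p\mid q$, produces for each squarefree $d\mid q$ a linear form $L_n^{(d)}$ supported \emph{exactly} on those $\zeta(k)$ with $\gcd(k,q)=d$, so that the $2^{\omega(q)}$ forms have pairwise disjoint supports and each one separately forces an irrationality. No such ``coefficient of $\zeta(k)$ vanishes unless $p\mid k$'' phenomenon is available, and this is not how \cite{FSZ2019} proceeds. What is actually produced is, for each divisor $d$ of a primorial $D$, a linear form
\[
S_d \;=\; \rho_{0,d} \;+\!\! \sum_{\substack{3\le i\le s\\ i\ \text{odd}}} \rho_i\, d^{\,i}\,\zeta(i),
\]
in which the $\rho_i$ are \emph{independent of $d$} and only the weight $d^{\,i}$ varies (compare Lemma~\ref{lemma_linear_form_S_b} in this paper). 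Every $S_d$ is supported on the full list of odd $\zeta(i)$. The elimination is then a single Vandermonde step, carried out \emph{after} assuming for contradiction that at most $\tau(D)-1$ of the $\zeta(i)$ are irrational, say those with $i\in J$: because the generalized Vandermonde matrix $(d^{\,j})_{d\mid D,\; j\in\{1\}\cup J}$ is nonsingular (\cite[Lemma~4]{FSZ2019}), one can choose integers $(w_d)_{d\mid D}$ with $\sum_d w_d d^{\,j}=0$ for all $j\in J$, whence $\sum_d w_d S_d$ is a small nonzero rational number, a contradiction. The count $2^{\omega(D)}=\tau(D)$ is the number of divisors feeding the Vandermonde system, not the number of disjoint support classes; your gcd-partition plays no role.

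Your remarks on non-vanishing are closer to the truth but still off: in \cite{FSZ2019} one shows that $S_1$ dominates $\sum_d w_d S_d$ asymptotically via a direct real-variable estimate of the hypergeometric sum, not via a contour-integral or Saalsch\"utz identity. Your final observation that precisely this dominance argument breaks down $p$-adically is correct, and it is what motivates Lemma~\ref{lem:irrationalityCrit} in the present paper.
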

A further improvement of the elimination technique led to the following theorem which gives the best lower bound for the number of irrational odd zeta values at the moment:
\begin{theorem*}[Lai--Yu, \cite{LY2020}]
	For any $\varepsilon>0$ and $s$ a sufficiently large positive odd integer,  at least
	\[
		\left(c_0 -\varepsilon\right)\sqrt{\frac{s}{\log s}}
	\]
	of the numbers $\zeta(3),\zeta(5),\dots,\zeta(s)$ are irrational, where
	\[
		c_0=\sqrt{\frac{4\zeta(2)\zeta(3)}{\zeta(6)}\left( 1-\log \frac{\sqrt{4e^2+1}-1}{2}\right)}\approx 1.192507\dots. 
	\]
\end{theorem*}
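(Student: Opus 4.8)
The plan is to follow the strategy initiated by Rivoal and Ball--Rivoal and refined through the elimination technique of Zudilin and of Fischler--Sprang--Zudilin: produce a large supply of small linear forms in $1$ and the odd zeta values, use bounded integer combinations of Hurwitz-zeta variants to eliminate the low-index values, and then optimize all free parameters. Concretely, with the degree parameter taken equal to $s$, an auxiliary integer $r$ with $0<r<s/2$, and an integer $q\geqslant 1$, I would consider for $n\to\infty$ the symmetrized hypergeometric series
\[
 S_n \;=\; n!^{\,s-2r}\sum_{k\geqslant 1}\Bigl(k+\tfrac{n}{2}\Bigr)\,\frac{(k-rn)_{rn}\,(k+n+1)_{rn}}{\bigl((k)_{n+1}\bigr)^{s}},
\]
where $(x)_m=x(x+1)\cdots(x+m-1)$, together with the finitely many companions obtained by shifting $k\mapsto k+j/q$. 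A partial-fraction expansion of the summand rewrites each such series as $p_{n,0}+p_{n,3}\zeta(3)+p_{n,5}\zeta(5)+\dots+p_{n,s}\zeta(s)$ --- only \emph{odd} zeta values surviving, thanks to the symmetrizing factor $k+\tfrac{n}{2}$ and the symmetry $k\mapsto -k-n$ of the summand --- while the companions become the corresponding linear forms in Hurwitz zeta values $\zeta(i,j/q)$. The estimates one needs are routine: the common denominator of the coefficients divides $\operatorname{lcm}(1,\dots,cn)^{s}=e^{(cs+o(1))n}$ for an explicit $c$; $\max_i|p_{n,i}|\leqslant e^{(\mathfrak a+o(1))n}$; and $0<|S_n|\leqslant e^{-(\mathfrak b+o(1))n}$, where $\mathfrak a,\mathfrak b>0$ are the values of explicit functions of $r/s$ at the relevant saddle point of a Laplace/Mellin--Barnes integral. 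It is these numbers, together with the later choice of $r$ and of the elimination depth, that will fix the constant $c_0$.

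The second step is the elimination. Following Zudilin, the identity $\sum_{j=1}^{q}\zeta(i,j/q)=q^{i}\,\zeta(i)$ shows that different integer combinations over $j$ of the companion forms yield linear forms in $1,\zeta(3),\dots,\zeta(s)$ with different, but closely related, coefficient vectors, so that there is room to cancel several coefficients simultaneously. I would exploit this together with Siegel's lemma to select bounded integer coefficients $\lambda_\iota$ for which $L_n:=\sum_\iota\lambda_\iota S_n^{(\iota)}$ has \emph{vanishing} coefficient on $\zeta(i)$ for every odd $i\leqslant 2\ell+1$, with $\ell$ as large as the size bookkeeping allows; $L_n$ remains small and its denominator stays controlled, and after this step one is left with a linear form in $1$ and the high-index values $\zeta(2\ell+3),\dots,\zeta(s)$ alone. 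Organizing this elimination as efficiently as possible --- extracting the maximal $\ell$ from a given budget of basic forms --- is precisely where Lai--Yu improve on Fischler--Sprang--Zudilin.

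The main obstacle is the \emph{non-vanishing} of $L_n$: one has formed a large signed integer combination of forms whose own coefficients oscillate, and a priori catastrophic cancellation could collapse $L_n$ to $0$. I would handle this by isolating, from the explicit factorial/hypergeometric shape of the coefficients $p_{n,i}^{(\iota)}$, a single term of the combination that strictly dominates all the others and hence cannot be cancelled --- equivalently, by setting up a dedicated non-vanishing criterion for the family; it is exactly here that the passage to the $p$-adic setting later forces a genuinely new idea, a $p$-adic irrationality criterion in place of the analytic argument. Granting the non-vanishing, the irrationality count becomes a soft argument: if at most $N-1$ of $\zeta(3),\zeta(5),\dots,\zeta(s)$ were irrational, then taking $\ell$ past the largest index of a rational value and eliminating all the irrational ones would leave a \emph{nonzero} small linear form in $1$ and finitely many rationals, which after clearing their common denominator becomes a nonzero integer tending to $0$ --- a contradiction.

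It remains to optimize. The largest $\ell$ one may use while keeping $L_n$ both nonzero and genuinely small is governed by requiring the growth exponent of the denominator of $L_n$ --- which carries a Siegel-lemma penalty proportional to $\ell$ --- to stay below the decay exponent of $|S_n|$. Carrying out the resulting variational problem (optimizing $r/s$ and $\ell/s$ and then letting $s\to\infty$) yields, after an explicit but somewhat involved calculus computation drawing on the arithmetic estimates above, that at least $(c_0-\varepsilon)\sqrt{s/\log s}$ of the numbers $\zeta(3),\zeta(5),\dots,\zeta(s)$ are irrational, with $c_0$ exactly the constant of the statement; the quantity $1-\log\frac{\sqrt{4e^{2}+1}-1}{2}$ is what emerges from the Euler--Lagrange equation of this optimization. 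I expect the two genuinely delicate points to be (i) the non-vanishing of $L_n$ after elimination and (ii) carrying the asymptotic analysis far enough to obtain the \emph{sharp} constant $c_0$ rather than a weaker one.
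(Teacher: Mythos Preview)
The paper does not itself prove this statement: the Lai--Yu theorem is only \emph{cited} in the introduction as motivation, and the body of the paper establishes the $p$-adic analogue (Theorem~\ref{thmA}). That said, the $p$-adic proof is explicitly modelled on \cite{LY2020}, so its architecture (Sections~\ref{sec:Rational}--\ref{sec:MainThm}) is a reliable proxy for what the Archimedean argument looks like, and I can compare against that.

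Against that benchmark, your sketch has a genuine structural gap. What you describe --- a Ball--Rivoal series with a single parameter $r$, companions obtained by a \emph{single} shift modulus $q$, and elimination via Siegel's lemma --- is essentially the Fischler--Sprang--Zudilin framework (with a hint of Fischler's later Siegel-lemma method), and that machinery produces at best the $2^{(1-\varepsilon)\log s/\log\log s}$ bound, not $\sqrt{s/\log s}$. The Lai--Yu improvement is \emph{not} ``organizing this elimination as efficiently as possible'' within that setup; it is a different construction altogether. One takes the set $\Psi_B=\{b:\varphi(b)\leqslant B\}$ of \emph{all} moduli with bounded totient and the associated set $\mathcal{Z}_B$ of reduced fractions, and builds a rational function whose numerator has a Pochhammer block $(t+\theta)_n$ for \emph{every} $\theta\in\mathcal{Z}_B$ simultaneously (compare Definition~\ref{definition_Rn(t)}). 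This makes the linear forms $S_b$ for different $b\in\Psi_B$ share the \emph{same} zeta coefficients $\rho_i$, so the elimination is an explicit generalized Vandermonde inversion (Proposition~\ref{prop:elimination}), not Siegel's lemma. The number of zeta values one can eliminate is then $|\Psi_B|-1$, and it is Bateman's theorem $|\Psi_B|\sim\frac{\zeta(2)\zeta(3)}{\zeta(6)}B$ (Lemma~\ref{lemma_inverse_totient}) that injects the factor $\zeta(2)\zeta(3)/\zeta(6)$ into $c_0$. None of this appears in your outline.

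Concretely: with a single modulus $q$ you have only $O(q)$ companion forms, whereas the coefficient growth forces $q=O(\log s)$; you simply cannot eliminate $\asymp\sqrt{s/\log s}$ values that way. And your proposed source for the constant --- ``the Euler--Lagrange equation'' of an optimization in $r/s$ and $\ell/s$ --- is not where $1-\log\tfrac{\sqrt{4e^2+1}-1}{2}$ actually comes from; in Lai--Yu it arises from balancing the size $|\mathcal{Z}_B|\sim\tfrac12\frac{\zeta(2)\zeta(3)}{\zeta(6)}B^2$ against the growth of the normalizing factors $A_1(B),A_2(B)$ and the Archimedean decay of the linear form, with $B\asymp\sqrt{s/\log s}$ (the analogue of Lemma~\ref{lemma_estimate_A_1_A_2} and the choice of $B(s)$ in the proof of Theorem~\ref{thmA}). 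Your identification of the non-vanishing of $L_n$ as the delicate point is correct, but note that Lai--Yu handle it by an Archimedean positivity/asymptotic argument, whereas the present paper replaces that step by the $l(n)$-adic criterion of Lemma~\ref{lem:irrationalityCrit}.
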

The main theorem of this work (Theorem \ref{thmA}) establishes a $p$-adic version of this theorem. Before we turn our attention to $p$-adic zeta values let us mention the following remarkable theorem of Fischler which improves the theorem of Rivoal and Ball--Rivoal considerably:
\begin{theorem*}[Fischler, \cite{Fis21}]
	For any sufficiently large odd positive integer $s$
	\[
		\dim_{\mathbb{Q}} \left( \mathbb{Q}+ \zeta(3)\mathbb{Q}+ \zeta(5)\mathbb{Q}+\dots + \zeta(s)\mathbb{Q} \right)\geqslant 0.21\sqrt{\frac{s}{\log s}}.
	\]
\end{theorem*}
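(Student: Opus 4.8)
\medskip

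\noindent\textbf{Proof proposal.} The plan is to keep the Rivoal--Ball--Rivoal circle of ideas but to replace the single small linear form produced there, for each value of the parameter, by a whole large-rank family of simultaneously small integer linear forms in the odd zeta values, obtained through Siegel's lemma, and then to convert such a family into a dimension lower bound via a Nesterenko-type linear independence criterion. The arithmetic input is classical: for every positive integer $n$ and every shift $h$ with $0\le h\le H$ (the bound $H=H(s)$ to be chosen), a very-well-poised hypergeometric series yields, after multiplication by $d_n^{\,s}$, an integer linear form
\[
\widehat S_{n,h}=p_{0}^{(n,h)}+p_{3}^{(n,h)}\zeta(3)+p_{5}^{(n,h)}\zeta(5)+\dots+p_{s}^{(n,h)}\zeta(s)
\]
in which only the odd zeta values survive (Rivoal's parity symmetry), with integer coefficients of size $|p_{j}^{(n,h)}|\le e^{(\beta+o(1))n}$ for an explicit $\beta=\beta(s)$ that is essentially linear in $s$, and with $\bigl|\widehat S_{n,h}\bigr|\le e^{-(\alpha+o(1))n}$ evaluated at $(\zeta(3),\dots,\zeta(s))$ for an explicit $\alpha=\alpha(s)$ coming from the Stirling asymptotics of the series. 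The ambient dimension, i.e.\ the number of ``unknowns'' $1,\zeta(3),\dots,\zeta(s)$, is $m+1=\tfrac{s+1}{2}+1\asymp s/2$.

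The new ingredient is to view, for fixed large $n$, the forms $\widehat S_{n,0},\dots,\widehat S_{n,H}$ (equivalently, a suitable Pad\'e-type auxiliary function with undetermined coefficients) as defining a lattice in $\mathbb Z^{m+1}$ and to apply Siegel's lemma, in the sharp Bombieri--Vaaler form together with Minkowski's second theorem. Counting integer combinations with bounded coefficients against the length of the interval into which their values at $(\zeta(3),\dots,\zeta(s))$ are squeezed, and iterating, one produces for each $n$ a system $L_{n,1},\dots,L_{n,r}$ of $r$ linearly independent integer linear forms in $1,\zeta(3),\dots,\zeta(s)$ (still with denominator $d_n^{\,s}$), with coefficients $\le e^{(\beta'+o(1))n}$ and values $\le e^{-(\alpha'+o(1))n}$, where the rank $r$ may be pushed up at the cost of a weaker smallness exponent $\alpha'$, subject to a balance condition of the shape ``$\alpha'$ dominates a suitable multiple of $r\beta'$''. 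One then feeds this sequence of systems into a linear independence criterion of Nesterenko type, valid for a sequence of families rather than single forms: under the balance condition and a non-degeneracy hypothesis on the $L_{n,i}$, it outputs
\[
\dim_{\mathbb Q}\bigl(\mathbb Q+\zeta(3)\mathbb Q+\zeta(5)\mathbb Q+\dots+\zeta(s)\mathbb Q\bigr)\ \ge\ r+1 .
\]
Finally one optimises the free parameters --- the shift range $H$, the exponent $s-2\kappa$ in the factor $n!^{\,s-2\kappa}$ of the series, and the Siegel box sizes --- against the denominator growth $d_n^{\,s}\asymp e^{sn}$, the archimedean gain ($\alpha$ beats $\beta$ by a factor $\asymp\log s$), and the fact that at most $\asymp s/2$ independent forms fit in the ambient space; the relevant optimisation is quadratic in nature (degrees of freedom $\asymp s$ against vanishing order $\asymp s$, forcing a square-root rather than a logarithm), and this is what turns the logarithmic Ball--Rivoal bound into one of order $\sqrt{s/\log s}$ with the explicit constant $0.21$.

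The step I expect to be the main obstacle is the non-degeneracy (``non-vanishing'') hypothesis required to apply the criterion: Siegel's lemma is purely existential and gives no control over which coordinate directions the small forms $L_{n,i}$ occupy, so one must separately certify that the family has full effective rank --- and that it stays full-rank after reduction modulo any hypothetical $\mathbb Q$-linear relation among the $\zeta$'s. This typically demands a non-vanishing determinant assembled from the explicit coefficients $p_j^{(n,h)}$ (a Vandermonde- or Selberg-type evaluation, or a valuation computation showing the relevant minor is a unit) or a genericity argument in the shift parameter $h$; combined with the delicate task, inside the criterion itself, of controlling how the ``small directions'' drift with $n$, this is the technical heart of the proof --- and it is exactly the same obstruction, transposed to the $p$-adic world, that the present paper overcomes with its new irrationality criterion.
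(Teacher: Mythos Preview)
This theorem is \emph{not} proved in the present paper: it is quoted in the introduction as Fischler's result \cite{Fis21}, with no argument given. The paper's own contribution is the $p$-adic Theorem~\ref{thmA}, whose proof uses a completely different mechanism (Volkenborn integration, elimination, and the $l(n)$-adic irrationality criterion of Lemma~\ref{lem:irrationalityCrit}) and does not yield a \emph{dimension} lower bound. So there is no ``paper's own proof'' to compare your proposal against.

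That said, your sketch is a fair high-level summary of what Fischler actually does in \cite{Fis21}: construct, via Siegel's lemma (in Bombieri--Vaaler form), a large-rank system of small integer linear forms in $1,\zeta(3),\dots,\zeta(s)$, and then feed it into a generalised Nesterenko-type criterion adapted to families of forms rather than a single sequence. Your identification of the main obstacle --- certifying that the forms produced by the non-constructive Siegel step have the required rank and non-degeneracy properties --- is also correct, and is indeed the technical heart of Fischler's paper. Where your outline remains vague is precisely there: you do not say \emph{how} the non-degeneracy is secured, and ``a Vandermonde- or Selberg-type evaluation, or a valuation computation'' is a placeholder rather than a plan. Fischler's actual solution involves a careful choice of the auxiliary construction so that the relevant minors can be analysed; without that, the argument does not close. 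Your final paragraph's analogy with the present paper is apt but slightly misleading: the $l(n)$-adic trick of Lemma~\ref{lem:irrationalityCrit} circumvents non-vanishing for a \emph{single} sequence of forms and gives only an irrationality count, whereas Fischler needs genuine rank control for a \emph{family} to obtain a dimension bound --- these are different problems.
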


Let us now turn our attention to $p$-adic zeta values. In the following, we will fix a prime $p$ and write as usually $\mathbb{Q}_p$ for the field of $p$-adic numbers. For an integer $s\neq 1$, we define the $p$-adic zeta value $\zeta_p(s):=L_p(s, \omega^{1-s})$, where $\omega$ is the $p$-adic Teichmüller character and $L_p(s,\chi)$ is the Kubota--Leopoldt $p$-adic $L$-function. This definition is justified by the fact that
\[
	\zeta_p(s)=(1-p^{-s})\zeta(s), \quad \text{ for } s\in \mathbb{Z}_{<0}.
\]
On the other hand, it follows from the Kummer congruences that the value for positive integers $s$ can be obtained by a process of $p$-adic interpolation from classical zeta values
\begin{equation}\label{eq:zetap_as_limit}
	\zeta_p(s)=\lim_{\substack{ k\to s \text{ $p$-adically}\\  k\in \mathbb{Z}_{<0},\quad  k\equiv s \mod (p-1) }}\zeta (k), \quad \text{ for } s\in \mathbb{Z}_{> 1}.
\end{equation}
Note that \eqref{eq:zetap_as_limit} implies $\zeta_p(2n)=0$ for $n\in \mathbb{Z}_{>0}$. It turns out that the question about the nature of the values $\zeta_p(2n+1)$ is even more difficult than the corresponding question for classical odd zeta values. For example, it is not even known that $\zeta_p(2n+1)\neq 0$ for all $n$ and all primes $p$. In the following, we will briefly discuss what is known about the irrationality of $p$-adic zeta values. Calegari proved a $p$-adic version of Ap\'ery's theorem for $p=2,3$:
\begin{theorem*}[Calegari, \cite{Cal05}]
$\zeta_p(3)$ is irrational for $p=2,3$.
\end{theorem*}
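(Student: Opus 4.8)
I would build a $p$-adic analogue of Apéry's proof and feed it into the $p$-adic irrationality criterion, which reads: a number $\theta\in\mathbb Q_p$ is irrational provided there exist integers $A_n,B_n$ with $A_n\theta-B_n\neq 0$, with $|A_n\theta-B_n|_p\to 0$, and with $|A_n\theta-B_n|_p\cdot\max(|A_n|,|B_n|)\to 0$; this follows from the product formula just as the classical criterion uses $|m|\geq 1$ for nonzero integers $m$. The task is thus to manufacture such $A_n,B_n$ for $\theta=\zeta_p(3)$.

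The raw material is Apéry's construction in Beukers' form: the polynomials $P_n(x)=\frac1{n!}\frac{d^n}{dx^n}\bigl(x^n(1-x)^n\bigr)$ and the period $I_n=\int_0^1\!\!\int_0^1\frac{-\log(xy)}{1-xy}P_n(x)P_n(y)\,dx\,dy=a_n\zeta(3)-b_n$, where $a_n\in\mathbb Z$, $d_n^3b_n\in\mathbb Z$ with $d_n=\operatorname{lcm}(1,\dots,n)$, while $a_n\asymp\alpha^n$ and $|I_n|\asymp\alpha^{-n}$ for $\alpha=(1+\sqrt2)^4$. Everything arithmetic here — the identification of $a_n,b_n$ through the recurrence $(n+1)^3u_{n+1}=(34n^3+51n^2+27n+5)u_n-n^3u_{n-1}$, the integrality assertions, the growth of $a_n$ — is formal and survives unchanged. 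The step that must be genuinely reworked is to replace the archimedean period $I_n$ by a $p$-adic quantity $J_n$ that is again a linear form in $1$ and $\zeta_p(3)$, i.e. $J_n=a_n\zeta_p(3)-b_n$: one writes $\zeta_p(3)$ as an integral against the Kubota--Leopoldt measure (so that the interpolation \eqref{eq:zetap_as_limit} is built in), slides the Apéry polynomials under the integral, and identifies the outcome — equivalently, one passes through Beukers' realization of $I_n$ via an Eisenstein series paired with the weight-$4$ newform of level $6$ and $p$-adicizes it using overconvergent modular forms. Granting such a $J_n$, put $A_n=d_n^3a_n$ and $B_n=d_n^3b_n$; then $A_n\theta-B_n=d_n^3J_n$, so $\max(|A_n|,|B_n|)\leq e^{(3+\log\alpha+o(1))n}$ and $|A_n\theta-B_n|_p\leq p^{-v_p(J_n)}$, and the criterion applies once $v_p(J_n)\log p>(3+\log\alpha)n$ eventually, a lower bound one extracts from the $p$-adic $\Gamma$-function valuations of the binomial coefficients occurring in $J_n$.

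\textbf{The main obstacles.} Two features have no counterpart in the classical argument. First, \emph{non-vanishing}: over $\mathbb R$ the sign of the integrand makes $I_n\neq 0$ automatic, whereas $p$-adically one must single out a term of strictly smallest valuation in the convergent sum defining $J_n$, which simultaneously forbids cancellation and yields the valuation estimate above. Second, and more fundamentally, the mere \emph{existence} of a $p$-adic linear form in $\zeta_p(3)$ attached to Apéry's data is delicate and, as far as I can see, goes through cleanly only for $p=2$ and $p=3$ — heuristically because $6$, the level of the modular realization, is divisible by exactly these primes, so that the relevant overconvergent forms and the link to the Kubota--Leopoldt $L$-value are available without an extra $p$-stabilization, whereas for $p\geq 5$ one is left at best with linear forms in the classical $\zeta(3)$ rather than in $\zeta_p(3)$. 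Getting this construction step to work — not the subsequent valuation bookkeeping — is where the hypothesis $p\in\{2,3\}$ is forced, and it is the part of the proof I would expect to be hardest.
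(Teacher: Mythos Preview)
The paper does not prove this statement: it is quoted from Calegari's 2005 paper \cite{Cal05} as background for the introduction, with no argument supplied here. There is therefore no ``paper's own proof'' to compare your proposal against.

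As for the proposal itself, it is an outline rather than a proof, and you essentially acknowledge this when you write ``Granting such a $J_n$''. The entire content of Calegari's theorem lies in constructing the $p$-adic period $J_n$ and controlling its valuation, and your sketch offers two mutually incompatible routes to it in the same sentence --- ``slide the Ap\'ery polynomials under the Kubota--Leopoldt integral'' versus ``$p$-adicize Beukers' modular realization via overconvergent forms'' --- without committing to either or indicating how either would actually go. Calegari's argument is firmly the second: it works in the space of overconvergent $p$-adic modular forms, uses the $U_p$ operator and the existence of a unique normalized overconvergent eigenform of small slope in the relevant weight to force relations among Fourier coefficients, and extracts from this the recurrence and valuation estimates needed. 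The restriction to $p\in\{2,3\}$ in that argument comes from the numerics of slopes and the existence of the requisite small-slope form, which is related to but not identical with your heuristic about the prime factorization of the level $6$. The alternative route via direct $p$-adic integrals against rational functions (closer in spirit to Beukers \cite{Beu08} and to the present paper) is a genuinely different construction, and your proposal does not distinguish between them or carry out either. So the sketch points in plausible directions but does not close the gap that constitutes the theorem.
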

Beukers gave an alternative proof of the irrationality of $\zeta_p(3)$ for $p=2,3$ and established the irrationality of certain $p$-adic Hurwitz zeta values \cite{Beu08}. Further results on the irrationality of $p$-adic Hurwitz zeta values have been obtained by Bel \cite{Bel10,Bel19}. Very recently, the first author establishes a $2$-adic version of a theorem of Zudilin:
 \begin{theorem*}[Lai, \cite{Lai2023}]
 For any $s \in \mathbb{Z}_{\geqslant 0}$, at least one of the zeta values $\zeta_2(j)$ for $s+3\leqslant j\leqslant 3s+5$ is irrational.
 \end{theorem*}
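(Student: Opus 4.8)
The plan is to adapt to the $2$-adic setting the very-well-poised hypergeometric construction that Zudilin used in \cite{Zud01} to treat $\zeta(5),\zeta(7),\zeta(9),\zeta(11)$; note that the latter is precisely the case $s=2$ of the statement. For each integer $n\ge 1$ one introduces a rational function of the schematic shape
\[
R_n(t)\;=\;\kappa_n\cdot\frac{\bigl((t-\rho n)(t-\rho n+1)\cdots(t+\rho n)\bigr)^{q}}{\bigl(t(t+1)\cdots(t+n)\bigr)^{r+1}}\cdot\Bigl(t+\tfrac{n}{2}\Bigr),
\]
where $\kappa_n$ is a ratio of factorials and the integer parameters $\rho,q,r$ are chosen as linear functions of $s$. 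Two structural features are used: the well-poised factor makes $R_n$ antisymmetric under a reflection $t\mapsto c_n-t$, which forces all even zeta values — and the odd zeta values of weight outside $[s+3,3s+5]$ — to cancel in the resulting linear form; and the degree of $R_n$ in $t$ is $\le -2$, so that the series obtained by summing $R_n$, or (following Rivoal \cite{Riv00}) a derivative of $R_n$ in an auxiliary parameter, over $t\in\mathbb{Z}_{\ge t_0}$ converges $2$-adically. Expanding into partial fractions and summing termwise, with $\sum_{k\ge 0}(k+x)^{-j}$ read off from the $2$-adic Hurwitz zeta function and reassembled into $\zeta_2$-values via \eqref{eq:zetap_as_limit} and the distribution relation, yields a linear form
\[
L_n\;=\;a_{0,n}+\sum_{\substack{s+3\le j\le 3s+5\\ j\ \mathrm{odd}}}a_{j,n}\,\zeta_2(j),\qquad a_{0,n},a_{j,n}\in\mathbb{Q},
\]
whose coefficients have denominators dividing a fixed power of $d_N=\mathrm{lcm}(1,\dots,N)$ with $N$ linear in $n$.

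Next I would assemble the quantitative inputs of a $p$-adic irrationality criterion. The archimedean estimate: a saddle-point (Stirling) analysis of the hypergeometric sum gives $\max_j|a_{j,n}|,\,|a_{0,n}|\le e^{(\alpha+o(1))n}$ and a common denominator $D_n\le e^{(\delta+o(1))n}$, with $\alpha,\delta$ explicit in $\rho,q,r$. The $2$-adic estimate: the factorial ratio $\kappa_n$ is divisible by a large power of $2$, and one checks that each term of the defining series, and each partial-fraction tail, inherits at least $(\beta+o(1))n$ factors of $2$, so that $v_2(L_n)\ge(\beta+o(1))n$ with $\beta>0$. It is precisely here that the prime must be $2$: for larger $p$ the corresponding $p$-adic valuation of $\kappa_n$ is too small, paralleling the restriction $p\in\{2,3\}$ in the work of Calegari \cite{Cal05} and Beukers \cite{Beu08}. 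Choosing the free parameters so that $\beta>\alpha+\delta$ is then a finite optimisation that one expects to succeed for the stated ranges.

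The real obstacle is \emph{non-vanishing}, $L_n\neq 0$: the complex-analytic argument (positivity of a multiple integral) has no $2$-adic counterpart — indeed, circumventing this is exactly what forces the present paper to introduce a new criterion. For the single construction underlying this theorem I would instead compute $v_2(L_n)$ \emph{exactly}. The point is that among the contributions to the partial-fraction expansion of (the relevant derivative of) $R_n$ there should be a distinguished one whose $2$-adic valuation is strictly smaller than that of every other contribution; then $v_2(L_n)$ equals this minimal valuation, and in particular $L_n\neq 0$. Establishing this strict unique minimum is a careful bookkeeping of $v_2(\kappa_n)$ against the $2$-adic valuations of $(t-\rho n)\cdots(t+\rho n)$ and $t(t+1)\cdots(t+n)$ as $t$ varies, and it is convenient to restrict $n$ to a subsequence of prescribed $2$-adic shape on which the minimum is provably attained exactly once. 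Granting this, and assuming for contradiction that every odd $\zeta_2(j)$ with $s+3\le j\le 3s+5$ were rational with common denominator $v$, the number $D_n v\,L_n$ would be a nonzero rational integer of archimedean size at most $e^{(\alpha+\delta+o(1))n}$, whence $|D_n v\,L_n|_2\ge|D_n v\,L_n|^{-1}\ge e^{-(\alpha+\delta+o(1))n}$; on the other hand $|D_n v\,L_n|_2\le|D_n|_2\,|L_n|_2\le e^{-(\beta+o(1))n}$. Since $\beta>\alpha+\delta$, this is impossible for large $n$, and the theorem follows.
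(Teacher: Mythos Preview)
This theorem is not proved in the present paper; it is merely quoted in the introduction as a prior result of the first author, with a citation to \cite{Lai2023}. There is consequently no proof in this paper to compare your proposal against. The paper's own contribution is Theorem~\ref{thmA}, a lower bound of order $\sqrt{s/\log s}$ on the number of irrational $p$-adic odd zeta values up to $\zeta_p(s)$, whose proof occupies Sections~\ref{sec:Rational}--\ref{sec:MainThm} and uses quite different machinery (the elimination technique combined with the auxiliary-prime criterion of Lemma~\ref{lem:irrationalityCrit}).

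As to your outline on its own terms: the general shape --- a very-well-poised construction in the style of \cite{Zud01}, producing $2$-adically small linear forms in the $\zeta_2(j)$ for $j$ in a window around a given $s$ --- is plausible and is presumably close to the approach of \cite{Lai2023}. But two points would have to be sharpened before it could be called a proof. First, the sentence ``the series obtained by summing $R_n$ \dots\ over $t\in\mathbb{Z}_{\ge t_0}$ converges $2$-adically'' is not right as written: sums like $\sum_{k\ge 0}(k+x)^{-j}$ diverge $p$-adically, and the $p$-adic Hurwitz zeta values arise instead via Volkenborn integration, as in \eqref{eq:zeta_Volkenborn}. Second, the non-vanishing step --- ``there should be a distinguished [term] whose $2$-adic valuation is strictly smaller than that of every other contribution'' --- is asserted, not established; this is exactly the hard part, and your sketch gives no mechanism for why a unique minimum exists or on which subsequence of $n$ it is attained.
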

In particular, this theorem implies that each of the sets $\{\zeta_2(7),\zeta_2(9),\zeta_2(11),\zeta_2(13)\}$ and $\{\zeta_2(5),\zeta_2(7)\}$ contains at least one irrational number. Recently, Calegari, Dimitrov and Tang announced the irrationality of $\zeta_2(5)$, \cite{CDT20}. The following theorem of the second author can be seen as a $p$-adic variant of the theorem of Rivoal and Ball--Rivoal:
\begin{theorem*}[Sprang, \cite{Spr2020}]
Let $p$ be a prime. For any $\varepsilon>0$ and $s$ a sufficiently large positive odd integer, we have
\[
		\dim_{\mathbb{Q}} \left( \mathbb{Q}+ \zeta_p(3)\mathbb{Q}+ \zeta_p(5)\mathbb{Q}+\dots + \zeta_p(s)\mathbb{Q} \right)\geqslant \frac{1-\varepsilon}{2+2\log 2} \log s.
	\]
\end{theorem*}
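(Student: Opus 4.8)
The plan is to carry the Ball--Rivoal construction over to the $p$-adic world. Fix an odd integer $a$ in the role of $s$ and an auxiliary integer $r$ with $0<2r<a$, and consider essentially the Ball--Rivoal rational function
\[
R_n(t)=(n!)^{a-2r}\,\frac{\prod_{j=1}^{rn}(t-j)\,\prod_{j=1}^{rn}(t+n+j)}{\prod_{j=0}^{n}(t+j)^{a}},
\]
whose numerator degree $2rn$ is strictly below the denominator degree $a(n+1)$, so that $\sum_{i=0}^{n}\operatorname{Res}_{t=-i}R_n=0$, and which is symmetric under $t\mapsto -n-t$ up to sign. Writing the partial fraction expansion $R_n(t)=\sum_{i=0}^{n}\sum_{\kappa=1}^{a}c_{i,\kappa,n}(t+i)^{-\kappa}$ with $c_{i,\kappa,n}\in\mathbb{Q}$, one attaches to $R_n$ the $p$-adic linear form
\[
\ell_n:=q_{0,n}+\sum_{\substack{3\leqslant\kappa\leqslant a\\ \kappa\ \text{odd}}}q_{\kappa,n}\,\zeta_p(\kappa),\qquad q_{\kappa,n}:=\sum_{i=0}^{n}c_{i,\kappa,n},
\]
where $q_{0,n}\in\mathbb{Q}$ is the correction term obtained when the series $\sum_{k\geqslant1}R_n(k)$, convergent in $\mathbb{R}$ but not in $\mathbb{Q}_p$, is read off $p$-adically through \eqref{eq:zetap_as_limit}, i.e.\ as the $p$-adic limit of the truncated sums from which the multiples of $p$ have been deleted; this is the standard way $p$-adic (Hurwitz) zeta values are produced, cf.\ \cite{Beu08}. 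The terms with $\kappa=1$ contribute only to $q_{0,n}$ (their residues summing to zero), and the terms with $\kappa$ even disappear because $\zeta_p(2m)=0$, so only $\zeta_p(3),\dots,\zeta_p(a)$ remain.

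The substance is then three quantitative estimates. \emph{$(i)$ $p$-adic smallness}: $v_p(\ell_n)\geqslant\tau n-o(n)$ for an explicit positive $\tau$. This is the deepest of the three and is the $p$-adic replacement for the classical analytic (saddle-point, or Beukers-integral) bound on $|S_n|$; it should be obtained from the combinatorial/analytic structure of $R_n$ — via Legendre's formula for $v_p(n!)$ and, if needed, a $p$-adic integral or well-poised hypergeometric identity for $\ell_n$ — showing $\ell_n$ to be divisible by a large power of $p$. \emph{$(ii)$ Denominators}: $d_{(r+1)n}^{\,a}q_{\kappa,n}\in\mathbb{Z}$ and $d_{(r+1)n}^{\,a}q_{0,n}\in\mathbb{Z}$ for $n$ large, where $d_m=\operatorname{lcm}(1,\dots,m)$, by the standard analysis of partial fraction coefficients (now also tracking the prime-to-$p$ harmonic sums inside $q_{0,n}$); passing to these integer coefficients diminishes $v_p(\ell_n)$ by only $O(\log n)$, since $v_p(d_m)=O(\log m)$. \emph{$(iii)$ Archimedean size}: $\max\!\big(|q_{0,n}|,|q_{\kappa,n}|\big)\leqslant C^{\,n}$ for an explicit $C$, the superexponential factor $(n!)^{a-2r}$ being cancelled up to a geometric factor by the denominator $\prod(t+j)^{a}$ when the residues $c_{i,\kappa,n}$ are estimated by Stirling's formula.

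Finally, feeding the integer linear forms $d_{(r+1)n}^{\,a}\ell_n$ in the $(a+1)/2$ numbers $1,\zeta_p(3),\dots,\zeta_p(a)$ into a $p$-adic analogue of Nesterenko's linear independence criterion and optimising the parameter $r$ should give
\[
\dim_{\mathbb{Q}}\bigl(\mathbb{Q}+\zeta_p(3)\mathbb{Q}+\dots+\zeta_p(a)\mathbb{Q}\bigr)\geqslant\frac{1-\varepsilon}{2+2\log2}\log a
\]
for $a$ sufficiently large in terms of $p$ (the $p$-dependent error terms in $(i)$ and $(ii)$, hence the admissible threshold, depending on $p$), the loss of a factor $2$ against the classical exponent stemming from the arithmetic of the $p$-adic construction. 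I expect $(i)$, hand in hand with the non-vanishing $\ell_n\neq0$, to be the main obstacle: besides the lower bound on $v_p(\ell_n)$ one must exclude a complete cancellation, and for the criterion exclude that all the $\ell_n$ lie in one fixed $\mathbb{Q}$-hyperplane. The natural remedy is to isolate one dominant partial fraction contribution and show that every other contribution has strictly larger $p$-adic valuation, so that $v_p(\ell_n)$ is computed exactly along a subsequence — which is precisely the non-degeneracy hypothesis of the criterion.
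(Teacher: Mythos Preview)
This theorem is not proved in the present paper; it is a result quoted from \cite{Spr2020} in the introduction, so there is no ``paper's own proof'' to compare against here. That said, your sketch does not yet constitute a proof, and the gap lies exactly where you suspect.

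The main problem is the definition of the $p$-adic linear form $\ell_n$. The Archimedean series $\sum_{k\geqslant 1}R_n(k)$ has no $p$-adic meaning, and your proposed fix --- ``the $p$-adic limit of the truncated sums from which the multiples of $p$ have been deleted'' --- does not converge either: the prime-to-$p$ partial sums $\sum_{k\leqslant N,\ p\nmid k}k^{-\kappa}$ are not $p$-adically Cauchy. The interpolation formula \eqref{eq:zetap_as_limit} for $\zeta_p$ is a limit over \emph{negative} integer arguments of $\zeta$, not over truncations of a Dirichlet series, so it does not hand you $\ell_n$ in the way you suggest. What is actually done (in \cite{Spr2020}, and refined in Sections~\ref{sec:LinearForms}--\ref{sec:p-adic} of this paper) is to take a primitive $\widetilde{R}_n$ of $R_n$, shift the argument by a rational $\theta$ with $|\theta|_p\geqslant q_p$, and \emph{Volkenborn-integrate}: $S_\theta=-\int_{\mathbb{Z}_p}\widetilde{R}_n(t+\theta)\,\mathrm{d}t$ is then a genuine $p$-adic number and, via \eqref{eq:zeta_Volkenborn}, a linear form in $p$-adic Hurwitz zeta values $\zeta_p(i,\theta)$; summing over a family of $\theta$'s with common $p$-power denominator and using Lemma~\ref{lem:padicHurwitz_zeta} collapses this to a linear form in the $\zeta_p(i)$. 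Crucially, the shift by $\theta$ is what makes $R_n(t+\theta)$ overconvergent with coefficients of large $p$-adic valuation (cf.\ Lemma~\ref{lemma_u_k}), and \emph{this} is the source of your estimate~(i). Legendre's formula for $v_p(n!)$ enters, but by itself it gives nothing: applied to the unshifted $R_n$ it would bound $v_p$ of individual coefficients $c_{i,\kappa,n}$, not of the assembled form $\ell_n$, and there is no reason for cancellation to produce an $n$-proportional gain.

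Once the linear form is correctly constructed, your outline of (ii), (iii), and the appeal to a $p$-adic Nesterenko criterion is the right shape, and the non-vanishing is indeed handled in \cite{Spr2020} by an argument of the flavour you describe at the end (controlling $v_p$ exactly along a subsequence). But until you replace the heuristic ``read off $p$-adically'' by the Volkenborn construction with shifted argument, step~(i) has no content and $\ell_n$ is not even defined.
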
  
Volkenborn integration over rational functions turned out to be a systematic tool to produce rapidly convergent linear forms in $p$-adic Hurwitz zeta values with related coefficients. So it seems to be a natural step to apply the elimination technique to such linear forms. Unfortunately, the arguments used in \cite{FSZ2019,LY2020} in order to show the non-vanishing of the resulting linear forms do not work in the case of $p$-adic zeta values. This is the main reason why the elimination technique has not yet been successfully applied to $p$-adic zeta values. In this paper, we overcome these technical difficulties and establish the following theorem which can be seen as a $p$-adic version of the theorem of Lai--Yu. 
\begin{theorem}\label{thmA}
    Let $p$ be a prime. For any $\varepsilon > 0$ and a sufficiently large positive odd integer $s$, we have
	\[ \#\{ \text{~odd~} j \in [3,s] ~\mid~ \zeta_p(j) \notin \mathbb{Q} \} \geqslant (c_p - \varepsilon)\sqrt{\frac{s}{\log s}},\]
	where the constant
	\[c_p = \sqrt{\frac{4\zeta(2)\zeta(3)}{\zeta(6)}\cdot\frac{\left(l_p+\frac{1}{p-1}\right)\log p - 1 - \log 2}{p^{l_p-2}(p^2-p+1)}},  \]
	and 
	\[ l_p = \begin{cases}
		1, &\text{if~} p \geqslant 5, \\
		2, &\textit{if~} p=3, \\
		3, &\textit{if~} p=2.
	\end{cases} \]
\end{theorem}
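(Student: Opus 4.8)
The plan is to transport the argument of Lai--Yu \cite{LY2020} to the $p$-adic world, using Volkenborn integration in place of summation of series, and to replace its analytic non-vanishing step — for which there is no $p$-adic analogue — by a non-vanishing input built directly into a new irrationality criterion. Concretely, building on the $p$-adic construction of \cite{Spr2020}, I would fix auxiliary integers $r$ and $a$ (to be optimized, with $a$ of size roughly $2r$) and, for large $n$, choose a rational function $R_n(t)$ that vanishes to order $\sim an$ along the $p$-adic sampling set of the Volkenborn integral and whose only poles are high-order poles at $t=0,-1,\dots,-n$. Applying $\int_{\mathbb Z_p}(\cdot)\,dx$ to $R_n$ — and, to leave room for the elimination step, to a few of its derivatives in the interpolation variable — and decomposing by partial fractions, together with the Volkenborn integral representation of $p$-adic Hurwitz zeta values, produces linear forms
\[
\ell_n \;=\; \rho_{0,n} \;+\; \sum_{\substack{3\leqslant j\leqslant s\\ j\ \mathrm{odd}}} \rho_{j,n}\,\zeta_p(j),\qquad \rho_{j,n}\in\mathbb Q,
\]
with explicit coefficients; here the even $p$-adic zeta values drop out by themselves, so, in contrast with the archimedean case, no symmetry of $R_n$ is needed to remove them.

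Next I would carry out the arithmetic estimates: (i) a lower bound $v_p(\ell_n)\geqslant(\gamma_p+o(1))n$ with $\gamma_p>0$, the rapid $p$-adic convergence, coming from the order of vanishing of $R_n$ on the sampling set together with $p$-adic valuation estimates for binomial-type coefficients — this is where $l_p$ and the factor $p^{\,l_p-2}(p^2-p+1)$ originate, tracking which residues modulo $p,p^2,\dots$ the sampling set meets; (ii) a common denominator $d_n$ with $\tfrac1n\log d_n\to\delta_p$, obtained from the prime number theorem applied to $\operatorname{lcm}(1,\dots,\kappa n)$ and reduced by a $p$-adic saving of $(l_p+\tfrac1{p-1})\log p$, since that power of $p$ already divides the relevant factorials; and (iii) asymptotics $\tfrac1n\log|\rho_{j,n}|\to\sigma$, from a Laplace/saddle-point analysis of the integrand. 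Averaging the arithmetic factor over all primes is what produces the universal constant $\tfrac{4\zeta(2)\zeta(3)}{\zeta(6)}$, with its Euler factor at $p$ then replaced by the $p$-adic contribution.

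The crux is combining the elimination technique with a usable non-vanishing statement. Following \cite{FSZ2019,LY2020}, I would take $\mathbb Q$-linear combinations of shifted copies of $\ell_n$ that annihilate a block of the $\zeta_p(j)$, producing a family of linear forms with an improved ratio of $p$-adic smallness to height. The difficulty is that, after this elimination, one can no longer argue as archimedean-ly that $\ell_n\neq 0$ (there it is the value of a convergent series of definite behaviour). To get around this I would prove a non-degeneracy statement: a suitable square matrix assembled from the coefficients $\rho_{j,n}$ over a bounded window of consecutive indices $n$ — equivalently, from $R_n$ together with its derivatives — has finite, explicitly computable $p$-adic valuation, hence is nonzero, because it reduces to a Vandermonde/Wronskian-type determinant times products of factorials and powers of $p$. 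One then feeds (i)--(iii) and this window-determinant condition into the new $p$-adic irrationality criterion, whose hypothesis is precisely the determinant condition rather than the non-vanishing of each individual form, and reads off a lower bound for $\#\{\,j\ \mathrm{odd} : \zeta_p(j)\notin\mathbb Q\,\}$ in terms of the ratio of $\gamma_p\log p$ to the height-growth rate $\sigma+\delta_p$.

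Finally, I would optimize over $r$, $a$, and the depth of the elimination. This is the same transcendental extremal problem as in \cite{LY2020}, with the archimedean quantity $1-\log\tfrac{\sqrt{4e^2+1}-1}{2}$ replaced by the $p$-adic gain $(l_p+\tfrac1{p-1})\log p-1-\log 2$ and with the extra factor $p^{-(l_p-2)}(p^2-p+1)^{-1}$; it yields the constant $c_p$, and hence Theorem~\ref{thmA} for all sufficiently large odd $s$. The main obstacle throughout is the step in the previous paragraph: making the elimination compatible with a non-vanishing hypothesis one can actually verify, which is exactly the role played by the new irrationality criterion.
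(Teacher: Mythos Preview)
Your overall framework matches the paper's: Volkenborn integration of (primitives of) a Lai--Yu-type rational function, partial-fraction decomposition yielding linear forms in $p$-adic Hurwitz zeta values, summation over residues to obtain forms in $\zeta_p(j)$, elimination via a Vandermonde matrix, and a new irrationality criterion designed to bypass the missing non-vanishing argument. You have also correctly located the crux.

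However, your proposed resolution of the non-vanishing issue is both different from the paper's and, as stated, not a proof. You suggest a ``window-determinant'' condition: a matrix of coefficients $\rho_{j,n}$ over a bounded range of consecutive $n$ (or derivatives of $R_n$) should reduce to a Vandermonde/Wronskian and hence be nonzero. It is not clear how such a determinant structure would emerge from these coefficients, nor how non-vanishing of a block determinant would feed into an irrationality criterion yielding the bound you want after elimination. This is exactly the step on which the whole argument turns, and your sketch does not carry it.

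The paper's mechanism is quite different and entirely concrete. Its criterion (Lemma~\ref{lem:irrationalityCrit}) replaces non-vanishing of $L_n$ by the requirement that, for an auxiliary prime $l(n)\to\infty$, the constant coefficient $l_{0,n}$ has \emph{strictly smaller} $l(n)$-adic valuation than every other coefficient $l_{i,n}$. To engineer this, the paper (i) restricts to $n$ with $n+1$ prime (and divisible by a fixed modulus) and sets $l(n)=n+1$; and (ii) modifies the rational function by shortening a single Pochhammer factor, the one attached to $\theta_{\max}$, from length $n$ to length $\theta_{\max}n-1$. This surgery leaves all arithmetic and analytic estimates essentially intact, but creates exactly one summand in $\rho_{0,\theta_{\min}}$---the term $(\nu,k)=(\theta_{\min}n,n)$, where $\nu+\theta_{\min}=l(n)/(p^{l_p}b_{\max})$---whose $l(n)$-adic valuation is $\leqslant -s+|\mathcal{Z}_B|-1<0$, while every other piece of every $\rho_{0,\theta}$ and every $\rho_i$ is $l(n)$-integral (Lemmas~\ref{lemma_rho_0theta_general}--\ref{lemma_integrality_rho_0_b}). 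After elimination the coefficient of $1$ inherits this negative $l(n)$-valuation (since the Vandermonde cofactor $w_{b_{\max}}$ is itself a nonzero generalized Vandermonde determinant), and the criterion applies directly. Note also that the family of linear forms used in the elimination does not come from derivatives of $R_n$, as you propose, but from the forms $S_b$ indexed by $b\in\Psi_B$, obtained by summing the shifted forms $S_\theta$ over residues $\theta$ with denominator $p^{l_p}b$.
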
 

\begin{remark}
Note that for a given prime $p$, the value for $l_p$ in Theorem \ref{thmA} is chosen in such a way that it maximizes the function
\[
 \mathbb{N}\to \mathbb{R}, \quad l_p\mapsto \frac{\left(l_p+\frac{1}{p-1}\right)\log p - 1 - \log 2}{p^{l_p-2}(p^2-p+1)}.
\]
\end{remark}

One of the key ingredients in the proof of Theorem \ref{thmA} is a new but elementary irrationality criterion (Lemma \ref{lem:irrationalityCrit}). Like other irrationality criteria, this criterion requires a sequence of rapidly converging linear forms $(L_n)_n$ with integer coefficients. The novel aspect of our criterion is that we replace the usual non-vanishing condition on the linear forms by a certain condition on the $\ell(n)$-adic valuation of the coefficients of the forms $L_n$, where $\ell(n)$ is an increasing sequence of auxiliary primes.

\bigskip

Let us briefly outline the structure of the paper: In section \ref{sec:Prelim}, we state and prove our new irrationality criterion. We also discuss basic facts about $p$-adic integration and $p$-adic Hurwitz zeta functions. Furthermore, we relate the Volkenborn integral of primitives of certain functions to the $p$-adic Bernoulli measure. This discussion will be used later to prove the $p$-adic convergence of our linear forms. In section \ref{sec:Rational}, we define a sequence of rational functions. These rational functions are defined similarly as the rational functions used in  \cite{LY2020}. The main difference is the term corresponding to the zero $\theta_{\mathrm{max}}$ in its numerator. This factor will play an important role when we verify the $\ell(n)$-adic properties of the resulting linear forms. In section \ref{sec:LinearForms}, we use Volkenborn integration over primitives of these rational functions to construct many linear forms in $p$-adic zeta values with related coefficients. In section \ref{sec:Arithmetic}, we study the arithmetic properties of the resulting linear forms. Furthermore, we check that the condition on the $\ell(n)$-adic valuation appearing in our irrationality criterion is fulfilled. In section \ref{sec:p-adic}, we use the relation between Volkenborn integration and  the $p$-adic Bernoulli measure to show the $p$-adic convergence of the linear forms. The Archimedean norm of the coefficients of the linear forms is bounded in section \ref{sec:Archimedean}. Finally, the proof of the main theorem is performed in section \ref{sec:MainThm}. The proof is very close to the elimination argument in \cite{FSZ2019,LY2020}. The main difference is that we replace the usual irrationality criterion by  Lemma \ref{lem:irrationalityCrit}.

\section*{Acknowledgement}
We would like to thank the referee for the careful reading of our manuscript, as well as for their invaluable comments and insights, which significantly enhanced the quality of the paper.

\section{Preliminaries}\label{sec:Prelim}

In this section, we formulate and prove a new but elementary irrationality criterion. Furthermore, we recall basic facts about Volkenborn integrals and $p$-adic Hurwitz zeta functions. In the following, $p$ will always denote a prime number and we will write $v_p\colon \mathbb{Q}_p\rightarrow \mathbb{Z}$ for the $p$-adic valuation with the normalization $v_p(p)=1$ and $|x|_p:=p^{-v_p(x)}$ for the $p$-adic norm on $\mathbb{Q}_p$.

\subsection{An irrationality criterion}
In the following, we will prove a new variant of the following elementary $p$-adic irrationality criterion:

\begin{lemma}\label{lem:irrationalityCrit_classical}
Let $\xi_0,\dots,\xi_s\in \mathbb{Q}_p$ and 
\[
	L_n:=l_{0,n}X_0+\dots + l_{s,n}X_s\in \mathbb{Z}[X_0,\dots, X_s] 
\]
be a sequence of linear forms with integral coefficients. Assume that
\[
    0<\max_{0\leqslant i\leqslant s}|l_{i,n}|\cdot |L_n(\xi_0,\dots,\xi_s)|_p\rightarrow 0, \quad \text{ as }n\to \infty.
\] 
Then at least one of $\xi_0,\dots,\xi_s$ is irrational. 
\end{lemma}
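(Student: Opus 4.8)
The plan is to prove the contrapositive: assuming that all of $\xi_0,\dots,\xi_s$ are rational, I will contradict the hypothesis that $\max_{0\leqslant i\leqslant s}|l_{i,n}|\cdot|L_n(\xi_0,\dots,\xi_s)|_p\to 0$. First, choose a common denominator and write $\xi_i=a_i/b$ with $a_0,\dots,a_s\in\mathbb{Z}$ and $b\in\mathbb{Z}_{\geqslant 1}$. Then for every $n$ we have $L_n(\xi_0,\dots,\xi_s)=m_n/b$ with $m_n:=l_{0,n}a_0+\dots+l_{s,n}a_s\in\mathbb{Z}$. The strict inequality $0<\max_{i}|l_{i,n}|\cdot|L_n(\xi_0,\dots,\xi_s)|_p$ in the hypothesis forces $L_n(\xi_0,\dots,\xi_s)\neq 0$, so $m_n$ is a \emph{nonzero} integer.

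Next I would bound $m_n$ at the two relevant places. On the Archimedean side, the triangle inequality gives
\[ |m_n|\;\leqslant\;\sum_{i=0}^{s}|a_i|\,|l_{i,n}|\;\leqslant\;C\cdot\max_{0\leqslant i\leqslant s}|l_{i,n}|,\qquad C:=(s+1)\max_{0\leqslant i\leqslant s}|a_i|, \]
where $C$ does not depend on $n$. On the $p$-adic side I use the elementary fact that any nonzero integer $m$ satisfies $|m|_p\geqslant|m|^{-1}$: indeed, writing $m=p^{v}u$ with $p\nmid u$ gives $|m|_p=p^{-v}$ and $|m|=p^{v}|u|\geqslant p^{v}$. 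Applying this to $m_n$ and combining with the previous display yields
\[ |L_n(\xi_0,\dots,\xi_s)|_p\;=\;\frac{|m_n|_p}{|b|_p}\;\geqslant\;\frac{1}{|b|_p\,|m_n|}\;\geqslant\;\frac{1}{C\,|b|_p\,\max_{0\leqslant i\leqslant s}|l_{i,n}|}. \]

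Multiplying through by $\max_{0\leqslant i\leqslant s}|l_{i,n}|$ then gives
\[ \max_{0\leqslant i\leqslant s}|l_{i,n}|\cdot|L_n(\xi_0,\dots,\xi_s)|_p\;\geqslant\;\frac{1}{C\,|b|_p}\;>\;0 \]
for all $n$, contradicting the assumption that the left-hand side tends to $0$. Hence at least one $\xi_i$ is irrational. I do not expect a genuine obstacle in this argument; the one point to watch is that the strict positivity in the hypothesis is exactly what guarantees $m_n\neq 0$, and this in turn is what makes the Liouville-type lower bound $|m_n|_p\geqslant|m_n|^{-1}$ available. This is the role normally played by a non-vanishing statement for the linear forms $L_n$, and it is precisely this ingredient that the refined criterion in Lemma~\ref{lem:irrationalityCrit} will have to supply by other means (via an $l(n)$-adic valuation condition on the coefficients).
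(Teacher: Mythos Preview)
Your argument is correct and is the standard proof of this criterion. The paper itself does not give a proof but simply refers to \cite[Lemma 2.1]{Lai2023}; the argument there is essentially the one you have written (common denominator, then the product-formula inequality $|m|_p\geqslant |m|^{-1}$ for a nonzero integer $m$), so there is nothing to compare.
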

\begin{proof}
    We refer to \cite[Lemma 2.1]{Lai2023} for a proof.
\end{proof}

In order to apply Lemma \ref{lem:irrationalityCrit_classical} one has to show the non-vanishing of $L_n(\xi_0,\dots,\xi_s)$. This is often a non-trivial task. The following variant of Lemma \ref{lem:irrationalityCrit_classical} replaces the non-vanishing condition by a condition on the $\ell(n)$-adic valuation of the coefficients of $L_n$, where $\ell(n)$  will be a sequence of auxiliary primes.

\begin{lemma}\label{lem:irrationalityCrit}
Let $\xi_0,\dots,\xi_s\in \mathbb{Q}_p$ and assume that $\xi_0\neq 0$. Let 
\[
	L_n:=l_{0,n}X_0+\dots + l_{s,n}X_s\in \mathbb{Z}[X_0,\dots, X_s] 
\]
be a sequence of non-trivial linear forms. Assume that there is an unbounded subset $I\subseteq\mathbb{N}$ and for each $n\in I$ a prime $\ell(n)$ such that the following three conditions hold:
\begin{enumerate}
\item\label{lem:irrationalityCrit:i} $ \max_{0\leqslant i\leqslant s}|l_{i,n}|\cdot |L_n(\xi_0,\dots,\xi_s)|_p\rightarrow 0$ as $n\to \infty$,
\item\label{lem:irrationalityCrit:ii} For all $n\in I$, we have $v_{\ell(n)}(l_{0,n})<v_{\ell(n)}(l_{i,n})$ for $i=1,\dots,s$.
\item\label{lem:irrationalityCrit:iii} We have $\ell(n)\to \infty $ as $n\to \infty$.
\end{enumerate}
Then at least one of $\xi_0,\dots,\xi_s$ is irrational. 
\end{lemma}
\begin{proof}
If one of $\xi_0\dots, \xi_s$ is irrational, we are done. Otherwise $L_n(\xi_0,\dots,\xi_s)\in \mathbb{Q}$, so it makes sense to take the $\ell(n)$-adic valuation. Since $\ell(n)\to \infty$ as $n\to \infty$, we have $v_{\ell(n)}(\xi_0)=0$ and $v_{\ell(n)}(\xi_i)\geqslant 0$ for all $i=1,\dots, s$ and almost all $n\in I$. Hence, we have
\[
	v_{\ell(n)}(L_n(\xi_0,\dots,\xi_s))= v_{\ell(n)}(l_{0,n}\cdot \xi_0)=v_{\ell(n)}(l_{0,n})<\infty ,
\]
for almost all $n\in I$. In particular, $L_n(\xi_0,\dots,\xi_s)$ is non-zero for almost all $n \in I$. Now, we can apply Lemma \ref{lem:irrationalityCrit_classical} along the subsequence of non-zero linear forms to obtain a contradiction to the rationality of $\xi_0,\dots, \xi_s$.
\end{proof}

Of course, the above lemma also has an Archimedean analogue which might be useful for the investigation of classical zeta values:
\begin{lemma}\label{lem:irrationalityCrit_R}
Let $\xi_0,\dots,\xi_s\in \mathbb{R}$ and assume that $\xi_0\neq 0$. Let 
\[
	L_n:=l_{0,n}X_0+\dots + l_{s,n}X_s\in \mathbb{Z}[X_0,\dots, X_s] 
\]
be a sequence of non-trivial linear forms. Assume that there is an unbounded subset $I\subseteq\mathbb{N}$ and for each $n\in I$ a prime $\ell(n)$ such that the following three conditions hold:
\begin{enumerate}
\item\label{lem:irrationalityCrit_R:i} $|L_n(\xi_0,\dots,\xi_s)|\rightarrow 0$ as $n\to \infty$,
\item\label{lem:irrationalityCrit_R:ii} For all $n\in I$, we have $v_{\ell(n)}(l_{0,n})<v_{\ell(n)}(l_{i,n})$ for $i=1,\dots,s$.
\item\label{lem:irrationalityCrit_R:iii} We have $\ell(n)\to \infty $ as $n\to \infty$.
\end{enumerate}
Then at least one of $\xi_0,\dots,\xi_s$ is irrational. 
\end{lemma}
\begin{proof}
    The proof is essentially the same as in the $p$-adic case, except that Lemma \ref{lem:irrationalityCrit_classical} is replaced by its Archimedean analogue.
\end{proof}

\subsection{$p$-adic integration}
In the following, we recall basic facts about Volkenborn integrals. For more details, we refer the reader to \cite[Chapter 5]{Rob00} or \cite[\S 55]{Sch06}. A continuous function $f\colon \mathbb{Z}_p\to \mathbb{Q}_p$ is called \emph{Volkenborn integrable} if the sequence
\[
\frac{1}{p^n}\sum_{0\leqslant k<p^n} f(k)
\]
converges $p$-adically. In this case, we define
\[
    \int_{\mathbb{Z}_p}f(t) \mathrm{d}t:= \lim_{n \to \infty} \frac{1}{p^n}\sum_{0\leqslant k<p^n} f(k).
\]
Important examples of Volkenborn integrable functions are given by strictly differentiable functions. The Volkenborn integral is not translation invariant, indeed it has the following behaviour under translations:
\begin{lemma}\label{lem:Volkenborn_translation}
    Let $f\colon\mathbb{Z}_p\to \mathbb{Q}_p$ be a strictly differentiable function and $m$ a positive integer, then
    \[
    \int_{\mathbb{Z}_p} f(t+m)\mathrm{d}t= \int_{\mathbb{Z}_p} f(t)\mathrm{d}t+\sum_{i=0}^{m-1}f'(i).
    \]
\end{lemma}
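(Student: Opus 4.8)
The plan is to reduce the translated integral to the untranslated one by reindexing the defining Riemann-type sums. First I would write out the definition: for each $n$,
\[
\frac{1}{p^n}\sum_{0\leqslant k<p^n} f(t+m)\big|_{t=k} = \frac{1}{p^n}\sum_{m\leqslant j<p^n+m} f(j) = \frac{1}{p^n}\sum_{0\leqslant j<p^n}f(j) + \frac{1}{p^n}\left(\sum_{p^n\leqslant j<p^n+m}f(j) - \sum_{0\leqslant j<m}f(j)\right).
\]
So the issue is to understand the $p$-adic limit of the correction term $\frac{1}{p^n}\sum_{i=0}^{m-1}\bigl(f(p^n+i)-f(i)\bigr)$ as $n\to\infty$. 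Since $m$ is fixed and finite, this is a sum of $m$ terms, and it suffices to show that for each fixed $i\in\{0,\dots,m-1\}$ one has $\frac{1}{p^n}\bigl(f(p^n+i)-f(i)\bigr)\to f'(i)$ as $n\to\infty$.

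The key step is precisely this last claim, and it is exactly the statement that $f$ is differentiable at $i$ in the $p$-adic sense: by definition of the derivative, $\frac{f(i+h)-f(i)}{h}\to f'(i)$ as $h\to 0$ with $h\neq 0$, and taking the particular sequence $h=p^n\to 0$ $p$-adically gives $\frac{f(p^n+i)-f(i)}{p^n}\to f'(i)$. (Here one uses that $i$ is an interior-type point of $\mathbb{Z}_p$ — every point of $\mathbb{Z}_p$ is a limit point — so the difference quotient limit is meaningful; a continuously differentiable function is in particular differentiable, which is all that is needed.) Summing over $i=0,\dots,m-1$ shows the correction term converges to $\sum_{i=0}^{m-1}f'(i)$. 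Meanwhile the main term $\frac{1}{p^n}\sum_{0\leqslant j<p^n}f(j)$ converges to $\int_{\mathbb{Z}_p}f(t)\,\mathrm{d}t$ by definition (and convergence of this sum is guaranteed because continuously differentiable functions are Volkenborn integrable). Adding the two limits yields the asserted formula, and in particular shows $t\mapsto f(t+m)$ is Volkenborn integrable.

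I do not expect a serious obstacle here: the only subtlety is making sure the difference-quotient argument is applied correctly (one needs the full differentiability of $f$, not merely continuity, which is why the hypothesis is stated as "continuously differentiable"), and checking that the finitely many boundary terms can be handled termwise rather than needing uniform control. An alternative, perhaps cleaner, route is to prove the case $m=1$ first — giving $\int_{\mathbb{Z}_p}f(t+1)\,\mathrm{d}t = \int_{\mathbb{Z}_p}f(t)\,\mathrm{d}t + f'(0)$ — and then iterate, applying the $m=1$ case to the translates $t\mapsto f(t+j)$ for $j=0,\dots,m-1$ and telescoping; but the direct reindexing above is already short enough that iteration buys little. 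This lemma (or its proof) is standard and appears in \cite[Chapter 5]{Rob00} and \cite[\S 55]{Sch06}, so I would likely just sketch the reindexing and cite those references for the differentiability estimate.
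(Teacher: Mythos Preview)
Your argument is correct and is exactly the standard one found in the cited references; the paper itself does not give a proof but simply refers to \cite[\S 5.3, Prop.~2]{Rob00}, so your sketch is if anything more detailed than what the paper records.
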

\begin{proof}
    See \cite[\S 5.3, Prop. 2]{Rob00}.
\end{proof}
In the next section, we will discuss the relation between Volkenborn integrals and $p$-adic zeta values in more detail. For the moment, let us only observe that the Bernoulli numbers appear as the moments of the Volkenborn integral:
\begin{lemma}\label{lem:Volkenborn_Bernoulli}
Let $n$ be a non-negative integer and $x\in \mathbb{Q}_p$. Then
\[
    \int_{\mathbb{Z}_p} (x+t)^n \mathrm{d}t= \mathbb{B}_n(x),
\]
where $\mathbb{B}_n$ is the $n$-th Bernoulli polynomial defined by the generating series
\[
    \frac{te^{X\cdot t}}{e^t-1}=\sum_{n=0}^\infty \mathbb{B}_n(X)\frac{t^n}{n!}.
\]
In particular, for $x=0$ the above integral gives the $n$-th Bernoulli number $B_n=\mathbb{B}_n(0)$.
\end{lemma}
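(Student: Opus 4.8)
The plan is to evaluate the defining limit directly. By definition,
\[
\int_{\mathbb{Z}_p}(x+t)^n\,\mathrm{d}t=\lim_{N\to\infty}\frac{1}{p^N}\sum_{k=0}^{p^N-1}(x+k)^n,
\]
so (in particular establishing Volkenborn integrability) it suffices to evaluate the partial sums $S_N:=\sum_{k=0}^{p^N-1}(x+k)^n$ and to show $p^{-N}S_N\to\mathbb{B}_n(x)$. For this I would first record the two standard polynomial identities for Bernoulli polynomials, both read off from the generating series $\tfrac{ze^{Xz}}{e^z-1}=\sum_m\mathbb{B}_m(X)\tfrac{z^m}{m!}$: the finite-difference relation $\mathbb{B}_{n+1}(Y+1)-\mathbb{B}_{n+1}(Y)=(n+1)Y^{n}$ (obtained by multiplying the generating series by $e^z-1$), and the addition formula $\mathbb{B}_{n+1}(Y+h)=\sum_{j=0}^{n+1}\binom{n+1}{j}\mathbb{B}_j(Y)\,h^{\,n+1-j}$ (obtained by multiplying by $e^{hz}$). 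These are identities of polynomials, hence hold with $Y,h$ taking values in $\mathbb{Q}_p$.

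Applying the finite-difference relation with $Y=x+k$ and summing over $0\leqslant k<p^N$ telescopes to the Faulhaber-type identity
\[
S_N=\frac{1}{n+1}\bigl(\mathbb{B}_{n+1}(x+p^N)-\mathbb{B}_{n+1}(x)\bigr).
\]
Expanding $\mathbb{B}_{n+1}(x+p^N)$ by the addition formula with $h=p^N$, the $j=n+1$ term cancels against $\mathbb{B}_{n+1}(x)$, and dividing by $p^N(n+1)$ yields
\[
\frac{1}{p^N}S_N=\mathbb{B}_n(x)+\sum_{j=0}^{n-1}\frac{1}{n+1}\binom{n+1}{j}\mathbb{B}_j(x)\,p^{N(n-j)},
\]
where the displayed leading term comes from $j=n$, using $\tfrac{1}{n+1}\binom{n+1}{n}=1$.

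It then remains to let $N\to\infty$. Since $n$ is fixed, each coefficient $\tfrac{1}{n+1}\binom{n+1}{j}\mathbb{B}_j(x)$ is a fixed element of $\mathbb{Q}_p$, while $n-j\geqslant 1$ for every $j\leqslant n-1$, so $p^{N(n-j)}\to 0$ in $\mathbb{Q}_p$; hence the sum tends $p$-adically to $0$ and $p^{-N}S_N\to\mathbb{B}_n(x)$, proving the lemma. Setting $x=0$ gives $\int_{\mathbb{Z}_p}t^n\,\mathrm{d}t=\mathbb{B}_n(0)=B_n$. I do not anticipate a real obstacle; the only point deserving a word of care is that the "lower-order" coefficients may carry powers of $p$ in their denominators, but for fixed $n$ their $p$-adic size is bounded and the factors $p^{N(n-j)}$ still force convergence to zero. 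An alternative route, slightly longer, would be to prove the case $x\in\mathbb{Z}_{>0}$ by combining $\int_{\mathbb{Z}_p}t^n\,\mathrm{d}t=B_n$ with Lemma \ref{lem:Volkenborn_translation} and the binomial theorem, and then to note that the two polynomials in $x$ on either side agree on the infinite set $\mathbb{Z}_{>0}$ and hence coincide identically.
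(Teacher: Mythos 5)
The paper gives no proof of this lemma: it simply cites \cite[Ch.~5, \S 5.4]{Rob00}. Your argument is a correct, self-contained proof of the statement, and it is essentially the standard one that such references present: telescope the finite-difference identity $\mathbb{B}_{n+1}(Y+1)-\mathbb{B}_{n+1}(Y)=(n+1)Y^n$ over $0\leqslant k<p^N$ to get Faulhaber's formula $S_N=\tfrac{1}{n+1}\bigl(\mathbb{B}_{n+1}(x+p^N)-\mathbb{B}_{n+1}(x)\bigr)$, expand via the addition formula, and observe that after dividing by $p^N$ only the $j=n$ term survives $p$-adically because the remaining terms carry factors $p^{N(n-j)}$ with $n-j\geqslant 1$. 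Your parenthetical worry about denominators in the $\mathbb{B}_j(x)$ is correctly dismissed: for fixed $n$ these are finitely many fixed elements of $\mathbb{Q}_p$, so the geometric decay of $p^{N(n-j)}$ dominates. Since $(x+t)^n$ is polynomial in $t$, the computation also establishes Volkenborn integrability simultaneously with evaluating the limit, which is clean. Your second, alternative route (deduce the integer-$x$ case from $\int_{\mathbb{Z}_p}t^n\,\mathrm{d}t=B_n$ and Lemma~\ref{lem:Volkenborn_translation}, then extend by polynomial identity on the Zariski-dense set $\mathbb{Z}_{>0}$) is also valid and nicely tied to the paper's own toolkit, though it is no shorter. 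In short: where the paper outsources the proof, you have supplied a complete and correct one.
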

\begin{proof}
See \cite[Ch. 5, \S 5.4]{Rob00}.
\end{proof}

Our next goal is to define a functional on overconvergent functions which computes the Volkenborn integral of primitives. For $\rho>0$, let us consider the $\mathbb{Q}_p$-algebra of $\mathbb{Q}_p$-analytic functions on $\mathbb{Z}_p$ of radius of convergence $\geqslant\rho$ 
\[
	C^{\text{an}}_\rho(\mathbb{Z}_p,\mathbb{Q}_p):=\left\{ f\in C(\mathbb{Z}_p,\mathbb{Q}_p) \mid f(x)=\sum_{k=0}^\infty a_kx^k \text{ with } |a_k|_p\rho^k\to 0 \text{ as }k\to \infty \right\}.
\]
The $\mathbb{Q}_p$-algebra $C^{\text{an}}_\rho(\mathbb{Z}_p,\mathbb{Q}_p)$ is a $\mathbb{Q}_p$-Banach algebra equipped with the  norm (see \cite[\S 6.1.5]{BGR84})
\[
	\left|\sum_{k=0}^\infty a_kt^k\right|_\rho:=\max_{k \geqslant 0} |a_k|_p\rho^k.
\]
In the following, we view $C^{\text{an}}_\rho(\mathbb{Z}_p,\mathbb{Q}_p)$ as a topological ring with the topology induced by $|\cdot|_\rho$. 
The $\mathbb{Q}_p$-algebra of \emph{overconvergent functions} on $\mathbb{Z}_p$ is given by
\[
	C^{\dagger}(\mathbb{Z}_p,\mathbb{Q}_p):=\varinjlim_{\rho>1} C^{\text{an}}_\rho(\mathbb{Z}_p,\mathbb{Q}_p).
\]
So $C^{\dagger}(\mathbb{Z}_p,\mathbb{Q}_p)$ consists of all $\mathbb{Q}_p$-analytic functions on $\mathbb{Z}_p$ of radius of convergence strictly larger than $1$.

\begin{definition}
Let $n$ be a positive integer. The continuous $\mathbb{Q}_p$-linear functional on $C^{\dagger}(\mathbb{Z}_p,\mathbb{Q}_p)$ defined by
\[
	\mathcal{L}_n\colon C^{\dagger}(\mathbb{Z}_p,\mathbb{Q}_p)\to \mathbb{Q}_p,\quad f=\sum_{k=0}^\infty a_kt^k\mapsto \mathcal{L}_n(f) :=\sum_{k=0}^\infty n\cdot a_k \frac{B_{k+n}}{k+n}
\]
is called the \emph{$n$-th Bernoulli functional} on $C^{\dagger}(\mathbb{Z}_p,\mathbb{Q}_p)$.
Note that the overconvergence of $f$ together with the von Staudt-Clausen congruence on Bernoulli numbers implies the convergence of the sum on the right-hand side.
\end{definition}
The following lemma relates the Volkenborn integral to the first Bernoulli functional. We will use this to estimate the $p$-adic norm of the Volkenborn integral of primitives of certain rational functions. Note that the derivative of an overconvergent function is again overconvergent.
\begin{lemma}\label{lem:comparison_Volkenborn_Bernoulli}
    For any $f\in C^{\dagger}(\mathbb{Z}_p,\mathbb{Q}_p)$ with $f(0)=0$ we have the formula
    \begin{equation}\label{eq:comparison_Volkenborn_Bernoulli}
	   \mathcal{L}_1(f')=\int_{\mathbb{Z}_p} f(t) \mathrm{d}t.
    \end{equation}
\end{lemma}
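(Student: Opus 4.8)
The plan is to reduce the identity to the two computational facts already at our disposal: Lemma~\ref{lem:Volkenborn_Bernoulli}, which evaluates Volkenborn integrals of monomials as Bernoulli numbers, and the definition of the first Bernoulli functional $\mathcal{L}_1$. Since both sides of \eqref{eq:comparison_Volkenborn_Bernoulli} are $\mathbb{Q}_p$-linear and continuous in $f$ with respect to $|\cdot|_\rho$ for some $\rho>1$ — continuity of $\mathcal{L}_1$ is built into its definition, and continuity of $f\mapsto\int_{\mathbb{Z}_p}f(x)\,dx$ on $C^{\dagger}$ follows from the fact that the Volkenborn integral of an overconvergent function can be estimated termwise using the van Staudt--Clausen bound on $B_k/k$ — it suffices to verify the formula on a dense set of $f$. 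First I would therefore take $f(t)=t^{k+1}$ for $k\geqslant 0$ (the condition $f(0)=0$ forces us to omit the constant monomial, which is exactly why the hypothesis is stated), since finite $\mathbb{Q}_p$-linear combinations of these are dense in $\{f\in C^{\dagger}(\mathbb{Z}_p,\mathbb{Q}_p)\mid f(0)=0\}$.

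For such a monomial, $f'(t)=(k+1)t^k$, so the left-hand side is $\mathcal{L}_1(f')=(k+1)\mathcal{L}_1(t^k)=(k+1)\cdot 1\cdot\frac{B_{k+1}}{k+1}=B_{k+1}$ by the defining formula for $\mathcal{L}_n$ with $n=1$. The right-hand side is $\int_{\mathbb{Z}_p}t^{k+1}\,dx=\mathbb{B}_{k+1}(0)=B_{k+1}$ by Lemma~\ref{lem:Volkenborn_Bernoulli}. Hence the two sides agree on every monomial $t^{k+1}$, and by $\mathbb{Q}_p$-linearity on every polynomial $f$ with $f(0)=0$.

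It remains to pass from polynomials to all of $C^{\dagger}(\mathbb{Z}_p,\mathbb{Q}_p)$. Given $f=\sum_{k\geqslant 1}a_kt^k$ in $C^{\text{an}}_\rho(\mathbb{Z}_p,\mathbb{Q}_p)$ with $\rho>1$, the partial sums $f_N=\sum_{k=1}^{N}a_kt^k$ converge to $f$ in $|\cdot|_\rho$; applying the already-established polynomial identity to each $f_N$ and then letting $N\to\infty$, using continuity of both functionals, yields \eqref{eq:comparison_Volkenborn_Bernoulli} for $f$. The only point requiring care — and the part I expect to be the main obstacle — is making the continuity of $f\mapsto\int_{\mathbb{Z}_p}f(x)\,dx$ on $C^{\dagger}$ precise: one needs a bound of the shape $\bigl|\int_{\mathbb{Z}_p}f(x)\,dx\bigr|_p\leqslant C_\rho\,|f|_\rho$, which comes from writing $\int_{\mathbb{Z}_p}t^k\,dx=B_k$, invoking van Staudt--Clausen to control $|B_k|_p$, and checking that $|B_k|_p\,\rho^{-k}$ stays bounded for $\rho>1$; alternatively one may sidestep this by invoking Lemma~\ref{lem:Volkenborn_translation} or the standard fact that $C^1$ functions are Volkenborn integrable with integral depending continuously on the function. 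Once the termwise estimate is in hand the limiting argument is routine.
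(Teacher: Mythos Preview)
Your proposal is correct and follows essentially the same route as the paper. The paper's proof is the one-line computation
\[
\mathcal{L}_1(f')=\mathcal{L}_1\Bigl(\sum_{k\geqslant 0}a_{k+1}(k+1)t^k\Bigr)=\sum_{k\geqslant 1}a_kB_k=\int_{\mathbb{Z}_p}f(x)\,dx,
\]
which is exactly your monomial check summed over $k$; the termwise integration $\int_{\mathbb{Z}_p}\sum a_kt^k\,dx=\sum a_kB_k$ that the paper uses implicitly is precisely the continuity/density step you spell out.
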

\begin{proof}
This follows immediately from the following computation: For $f(t)=\sum_{k\geqslant 1} a_kt^k \in C^{\dagger}(\mathbb{Z}_p,\mathbb{Q}_p)$, we have
\[
\mathcal{L}_1(f')	= \mathcal{L}_1\left( \sum_{k\geqslant 0} a_{k+1}\cdot(k+1) t^k \right)=\sum_{k\geqslant 1}  a_{k} B_{k}=\int_{\mathbb{Z}_p} f(t) \mathrm{d}t.
\]
\end{proof}

Let us finally relate the $n$-th Bernoulli functional to $p$-adic Bernoulli measures. Although the comparison will not be important for the rest of the paper, it compares the $n$-th Bernoulli functional to $p$-adic measures which play an important role in the construction and study of $p$-adic $L$-functions. Let us first recall the definition of $p$-adic measures and $p$-adic distributions:
A $p$-adic distribution on $\mathbb{Z}_p$ is a map $\mu\colon \mathcal{U}\rightarrow \mathbb{Q}_p$ on the set $\mathcal{U}$ of all compact-open subsets of $\mathbb{Z}_p$ which is additive, i.e. it satisfies
\[
	\mu\left(\bigcup_{i=0}^n U_i\right )=\sum_{i=0}^n \mu(U_i)
\]
for any finite collection $U_0,\dots,U_n\in \mathcal{U}$ of pairwise disjoint compact-open subsets. A $p$-adic distribution $\mu$ is called a \emph{$p$-adic measure on $\mathbb{Z}_p$} if and only if it is bounded, i.e.
\[
	\sup_{U\in \mathcal{U}} |\mu(U)|_p<\infty.
\]
Since any compact-open subset of $\mathbb{Z}_p$ can be written as a disjoint union of compact-open sets of the form $b+p^N\mathbb{Z}_p$ with $b,N\in \mathbb{Z}$ and $N\geqslant 0$, a $p$-adic distribution is uniquely determined by its values on such sets. Of course, any $p$-adic distribution can be integrated over locally constant functions with values in $\mathbb{C}_p$. The boundedness assumption in the definition of a $p$-adic measure allows us to define the integral with respect to $\mu$ for all continuous functions $f\colon \mathbb{Z}_p\rightarrow \mathbb{C}_p$ using $p$-adic Riemann sums:
\[
	\mu(f):=\int_{\mathbb{Z}_p} f \mathrm{d}\mu:=\lim_{N\to \infty}  \sum_{b=0}^{p^N-1} f(x_{b,N}) \mu(b+p^N\mathbb{Z}_p),
\]
where $x_{b,N}$ is an arbitrary element of $b+p^N\mathbb{Z}_p$. It is not difficult to see that the limit does not depend on the chosen representatives. Often it is convenient to choose $x_{b,N}=b$, i.e.
\begin{equation}\label{eq:Riemann_sum}
	\mu(f):=\int_{\mathbb{Z}_p} f \mathrm{d}\mu:=\lim_{N\to \infty}  \sum_{b=0}^{p^N-1} f(b) \mu(b+p^N\mathbb{Z}_p).
\end{equation}
In the following, we will prefer to write $\mu(f)$ instead of $\int_{\mathbb{Z}_p} f \mathrm{d}\mu$. In particular, this notation will be useful to avoid any confusion with the Volkenborn integral. 
Let us recall the definition of the $n$-th Bernoulli distribution, see e.g. \cite[Ch. II, \S 5]{Kob84} and \cite[\S 12.2]{Was97} for more details:
\begin{definition}
Let $n\geqslant 0$ be a non-negative integer. The \emph{$n$-th Bernoulli distribution} $\mu_n$ is the unique $p$-adic distribution such that
\[
	\mu_n(b+p^N \mathbb{Z}_p)	:=p^{N(n-1)}\mathbb{B}_n\left( \left\{ \frac{b}{p^N} \right\}\right)
\]
for all $b\in \mathbb{Z}$ and $N\geqslant 0$. Here, $\{x \}:=x-\lfloor x\rfloor$ is the fractional part of a rational number and $\mathbb{B}_n(X):=\sum_{k=0}^n \binom{n}{k}B_{n-k} X^k$ denotes the $n$-th Bernoulli polynomial.
\end{definition}
Note that the $0$-th Bernoulli distribution is the unique normalized translation invariant distribution, therefore it is also sometimes called $p$-adic  Haar distribution. Nevertheless, one can still use formula \eqref{eq:Riemann_sum} to define a notion of $p$-adic integration for certain continuous functions. This naturally leads to the notion of the Volkenborn integral, we have studied before. It is not difficult to see that none of the Bernoulli distributions is bounded. However, this can be fixed by the following regularization process. Let $\alpha\in \mathbb{Z}_p^\times$ and $\mu$ be a $p$-adic distribution. Multiplication by $\alpha$ defines a homeomorphism of $\mathbb{Z}_p$ and we can define a $p$-adic distribution $\alpha_*\mu$ by $\alpha_*\mu (U):=\mu(\alpha^{-1}\cdot U)$. It can be shown that for any non-negative integer $n$ and any $\alpha\in \mathbb{Z}_p^\times$ the $p$-adic distribution
\[
	\mu_{n,\alpha}:=\mu_n-\alpha^n\cdot \alpha_*\mu_{n}
\]
is bounded and hence a $p$-adic measure. The $p$-adic measure $\mu_{n,\alpha}$ is called the \emph{$n$-th regularized Bernoulli measure}. These measures play an important role in Iwasawa theory. Our next goal is to relate the  Bernoulli distributions to the Bernoulli functionals. 
\begin{proposition}\label{prop:Comparison_Bernoulli}
	Let $n$ be a non-negative integer and $\alpha\in \mathbb{Z}_p^\times$ a $p$-adic unit. Then we have the  equality 
\[
	\mu_{n,\alpha}=\mathcal{L}_n -\alpha^n\cdot \alpha_*\mathcal{L}_{n}
\]	
	of continuous $\mathbb{Q}_p$-linear functionals
	\[
		C^{\dagger}(\mathbb{Z}_p,\mathbb{Q}_p)\to \mathbb{Q}_p.
	\]
	Here, we view $\mu_{n,\alpha}=\mu_n-\alpha^n\cdot \alpha_*\mu_{n}$ as a continuous $\mathbb{Q}_p$-linear functional by integration, and $(\alpha_*\mathcal{L}_{n})(f(t)):=\mathcal{L}_n(f(\alpha\cdot t))$.
\end{proposition}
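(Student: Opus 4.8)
The plan is to check that both sides of the asserted identity are continuous $\mathbb{Q}_p$-linear functionals on $C^{\dagger}(\mathbb{Z}_p,\mathbb{Q}_p)$ and that they agree on the monomials $t^{k}$, $k\geqslant 0$; as these span a dense subspace, this suffices. Continuity of the right-hand side is part of the definition of $\mathcal{L}_n$, and since $\alpha\in\mathbb{Z}_p^{\times}$ the substitution $S_\alpha\colon f(t)\mapsto f(\alpha t)$ is an isometry of each Banach algebra $C^{\mathrm{an}}_\rho(\mathbb{Z}_p,\mathbb{Q}_p)$, so $\alpha_*\mathcal{L}_n=\mathcal{L}_n\circ S_\alpha$ is continuous as well. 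Continuity of the left-hand side follows from the boundedness of the measure $\mu_{n,\alpha}$: one has $|\mu_{n,\alpha}(f)|_p\leqslant\bigl(\sup_{U}|\mu_{n,\alpha}(U)|_p\bigr)\sup_{x\in\mathbb{Z}_p}|f(x)|_p\leqslant\bigl(\sup_{U}|\mu_{n,\alpha}(U)|_p\bigr)|f|_\rho$ for every $\rho\geqslant 1$. Since the polynomials are dense in each $C^{\mathrm{an}}_\rho(\mathbb{Z}_p,\mathbb{Q}_p)$, hence in $C^{\dagger}(\mathbb{Z}_p,\mathbb{Q}_p)$, it remains only to compare the two functionals on $f=t^{k}$.

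For the right-hand side this is immediate from the definitions: $\mathcal{L}_n(t^{k})=\frac{nB_{n+k}}{n+k}$ and $(\alpha_*\mathcal{L}_n)(t^{k})=\mathcal{L}_n\bigl((\alpha t)^{k}\bigr)=\alpha^{k}\frac{nB_{n+k}}{n+k}$, whence
\[
(\mathcal{L}_n-\alpha^{n}\cdot\alpha_*\mathcal{L}_n)(t^{k})=(1-\alpha^{n+k})\,\frac{nB_{n+k}}{n+k}.
\]
It therefore remains to show that the $k$-th moment of the regularized Bernoulli measure equals $\mu_{n,\alpha}(t^{k})=(1-\alpha^{n+k})\frac{nB_{n+k}}{n+k}$. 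I would derive this from the Riemann sum \eqref{eq:Riemann_sum} applied to $\mu_{n,\alpha}(b+p^{N}\mathbb{Z}_p)=p^{N(n-1)}\bigl(\mathbb{B}_n(\{b/p^{N}\})-\alpha^{n}\mathbb{B}_n(\{\alpha^{-1}b/p^{N}\})\bigr)$: writing $b^{k}=p^{Nk}\{b/p^{N}\}^{k}$ and invoking the multiplication formula for Bernoulli polynomials $\sum_{j=0}^{M-1}\mathbb{B}_n\bigl(x+\tfrac{j}{M}\bigr)=M^{1-n}\mathbb{B}_n(Mx)$ together with the classical power-sum expansions of $\sum_b b^{m}$, the limit collapses to the asserted value after a short but somewhat fiddly computation. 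Alternatively, this moment is standard in the theory of $p$-adic $L$-functions and can be quoted, e.g. from \cite[\S 12.2]{Was97}.

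The delicate point — essentially the only content of the proposition — is this moment computation, which must be carried out for the bounded measure $\mu_{n,\alpha}$ treated as a single object. Although $\mu_{n,\alpha}=\mu_n-\alpha^{n}\cdot\alpha_*\mu_n$ splits as a difference of distributions, the Bernoulli distributions $\mu_n$ are themselves \emph{unbounded}; in particular the naive formula $(\alpha_*\mu_n)(t^{k})=\alpha^{k}\mu_n(t^{k})$ is \emph{false}, and the Riemann sums defining $\mu_{n,\alpha}(t^{k})$ converge only because of the cancellation between the two unbounded pieces (this cancellation is precisely what replaces the naive $B_{n+k}$ by $\frac{n}{n+k}B_{n+k}$). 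Keeping track of this cancellation is where the work lies; the density reduction and the evaluation on the right-hand side are routine.
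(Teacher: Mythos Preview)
Your proposal is correct and follows essentially the same route as the paper: reduce to monomials by density and continuity, evaluate the right-hand side directly from the definition of $\mathcal{L}_n$, and match it against the known moment formula $\mu_{n,\alpha}(t^{k})=n(1-\alpha^{n+k})\frac{B_{n+k}}{n+k}$. The only difference is in how the moment formula is justified: the paper sketches it via the identity $\mu_{n,\alpha}(b+p^N\mathbb{Z}_p)\equiv nb^{n-1}\mu_{1,\alpha}(b+p^N\mathbb{Z}_p)\pmod{p^N}$ from \cite[II, Thm.~5]{Kob84}, passing through $\mu_{1,\alpha}$ and then to $\mu_{n+k,\alpha}(\mathbb{Z}_p)$, whereas you propose a direct Riemann-sum computation using the multiplication formula for Bernoulli polynomials (or a citation to \cite{Was97}); both are standard and your cautionary remark about the unboundedness of the individual $\mu_n$ is apt.
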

\begin{proof}
It is enough to compare the values of both functionals on the overconvergent functions $t\mapsto t^k$. By the definition of the Bernoulli functional, we have
\[
	(\mathcal{L}_n -\alpha^n\cdot \alpha_*\mathcal{L}_{n})(t^k)=n\cdot (1-\alpha^{k+n})\frac{B_{k+n}}{k+n}.
\]
On the other hand, the following formula for the `moments' of the regularized Bernoulli measure $\mu_{n,\alpha}$ is well known:
\begin{equation}\label{eq:Bernoulli_moments}
	\mu_{n,\alpha}(t^k)=n\cdot (1-\alpha^{k+n})\frac{B_{k+n}}{k+n}.
\end{equation}
For the convenience of the reader, let us sketch the proof of \eqref{eq:Bernoulli_moments}:
\begin{align*}
	\mu_{n,\alpha}(t^k) &= \lim_{N\to \infty} \sum_{b=0}^{p^N-1} b^k \mu_{n,\alpha}(b+p^N\mathbb{Z}_p)\\
	&\stackrel{(\star)}{=} n \cdot \lim_{N\to \infty} \sum_{b=0}^{p^N-1} b^{k+n-1} \mu_{1,\alpha}(b+p^N\mathbb{Z}_p)\\
	&\stackrel{(\star)}{=} \frac{n}{k+n} \cdot \lim_{N\to \infty} \sum_{b=0}^{p^N-1} \mu_{k+n,\alpha}(b+p^N\mathbb{Z}_p)\\
	&= \frac{n}{k+n} \cdot \mu_{k+n,\alpha}(\mathbb{Z}_p)=\frac{n}{k+n} (1-\alpha^{k+n})\cdot \mathbb{B}_{k+n}(0)\\
	&= n\cdot (1-\alpha^{k+n})\frac{B_{k+n}}{k+n}.
\end{align*}
Here, we have used \cite[II, Thm. 5]{Kob84} in the equations labelled with $(\star)$.
\end{proof}

Proposition \ref{prop:Comparison_Bernoulli} allows us to think about the $n$-th Bernoulli functional as a de-regularization of the $n$-th regularized $p$-adic Bernoulli measure on the space of overconvergent power series.

\subsection{$p$-adic Hurwitz zeta functions}
In this section, we recall basic facts about $p$-adic Hurwitz zeta functions and related functions.
It is convenient to define $q_p:=p$ if $p$ is an odd prime, and $q_2:=4$. The units $\mathbb{Z}_p^\times$ of the $p$-adic integers decompose canonically
\[
	\mathbb{Z}_p^\times \xrightarrow{\sim} \mu_{\varphi(q_p)}(\mathbb{Z}_p)\times (1+q_p\mathbb{Z}_p).
\]
Here, $\mu_n(R)$ denotes the group of $n$-th roots of unity in a ring $R$ and $\varphi(n)$ denotes Euler's totient function. The canonical projection
\[
	\omega \colon \mathbb{Z}_p^\times \rightarrow \mu_{\varphi(q_p)}(\mathbb{Z}_p)
\]
is called the \emph{Teichm\"uller character}. Let us extend the Teichm\"uller character to a map
\[
	\mathbb{Q}_p^\times\rightarrow \mathbb{Q}_p^\times,
\]
 by setting
\[
	\omega(x):=p^{v_p(x)}\omega(x/p^{v_p(x)}),
\]
and define $\langle x\rangle:=\frac{x}{\omega(x)}$ for $x\in\mathbb{Q}_p^\times$. For $x\in\mathbb{Q}_p$ with $|x|_p\geqslant q_p$, there is a unique $p$-adic meromorphic function $\zeta_p(s,x)$ on
\[
	\{s\in\mathbb{C}_p\setminus\{1\} \mid |s|_p<q_p p^{-1/(p-1)}\}
\]
such that 
\[
	\zeta_p(1-n,x)=-\omega(x)^{-n}\frac{\mathbb{B}_n(x)}{n}, \quad (n\geqslant 2).
\]
Using Lemma \ref{lem:Volkenborn_Bernoulli}, it is not difficult to show the existence of such a function using Volkenborn integrals. Indeed, we can define $\zeta_p(s,x)$ explicitly as follows:
\begin{equation}\label{eq:zeta_Volkenborn}
	\zeta_p(s,x)=\frac{1}{s-1}\int_{\mathbb{Z}_p}\langle x+t\rangle^{1-s}\mathrm{d}t,
\end{equation}
see \cite[Def. 11.2.5]{Coh07} for more details. Finally, we will need the following lemma which relates the $p$-adic Hurwitz zeta function to $p$-adic zeta values:
\begin{lemma}\label{lem:padicHurwitz_zeta}
    Let $D$ and $i$ be positive integers with $q_p|D$ and $i\geqslant 2$, then
    \[
        D^i\cdot \zeta_p(i)=\sum_{\substack{1\leqslant j\leqslant D \\ \gcd(j,p)=1 }} \omega\left(\frac{j}{D}\right)^{1-i}\zeta_p\left(i,\frac{j}{D}\right).
    \]
\end{lemma}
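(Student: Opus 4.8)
The plan is to verify the identity at all negative integers $s=1-n$ with $n\geqslant 2$ and then propagate it to positive integers by $p$-adic analytic continuation. The one subtlety is that the map $i\mapsto D^{i}\zeta_p(i)$ is \emph{not} $p$-adically analytic in $i$ (because $D$ has a non-trivial $p$-part, which also infects $\omega(j/D)^{1-i}$), so one cannot directly continue the two sides as they stand; I would first move the offending factor $D^{i}$ out of the way.

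First, using \eqref{eq:zeta_Volkenborn} together with the multiplicativity of $\omega$ and $\langle\,\cdot\,\rangle$ on $\mathbb{Q}_p^{\times}$, I rewrite each summand on the right. Since $q_p\mid D$ and $\gcd(j,p)=1$ we have $|j/D|_p\geqslant q_p$, so $\zeta_p(i,j/D)=\frac{1}{i-1}\int_{\mathbb{Z}_p}\langle j/D+t\rangle^{1-i}\,dt$; writing $\langle j/D+t\rangle=\langle j+Dt\rangle/\langle D\rangle$ and $\omega(j/D)^{1-i}=\omega(j)^{1-i}\omega(D)^{i-1}$, and using $\omega(D)\langle D\rangle=D$, one obtains
\[
\omega(j/D)^{1-i}\,\zeta_p\!\left(i,\tfrac{j}{D}\right)=D^{\,i-1}\,\omega(j)^{1-i}\cdot\frac{1}{i-1}\int_{\mathbb{Z}_p}\langle j+Dt\rangle^{1-i}\,dt .
\]
Summing over $j$ shows that the asserted identity is equivalent to $\Phi(i)=D\,\zeta_p(i)$, where $\Phi(s):=\frac{1}{s-1}\sum_{1\leqslant j\leqslant D,\ \gcd(j,p)=1}\omega(j)^{1-s}\int_{\mathbb{Z}_p}\langle j+Dt\rangle^{1-s}\,dt$. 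The gain is that $\omega(j)$ is now a genuine $\varphi(q_p)$-th root of unity (no $p$-part), so on a fixed residue class $s\equiv r\pmod{\varphi(q_p)}$ the factor $\omega(j)^{1-s}$ is constant, while $s\mapsto\int_{\mathbb{Z}_p}\langle j+Dt\rangle^{1-s}\,dt$ extends to an analytic function of $s$ on $\mathbb{Z}_p$: expand $\langle j+Dt\rangle^{1-s}=\langle j\rangle^{1-s}(1+\tfrac{D}{j}t)^{1-s}$ by the binomial/exponential series, with convergence uniform in $t$ by the van Staudt--Clausen estimate, exactly as in the definition of the Bernoulli functional. Hence $\Phi$ restricted to the class $r$, and likewise $D\,\zeta_p(s)=D\,L_p(s,\omega^{1-r})$, are analytic functions of $s$ on $\mathbb{Z}_p$ (with a possible pole at $s=1$ that we never touch, since $r\not\equiv 1$ forces the numerator of $\Phi$ to vanish there).

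Next I check $\Phi(1-n)=D\,\zeta_p(1-n)$ for every integer $n\geqslant 2$. By the interpolation formula $\zeta_p(1-n,x)=-\omega(x)^{-n}\mathbb{B}_n(x)/n$, the right-hand side of the lemma at $s=1-n$ equals $-\frac1n\sum_{\gcd(j,p)=1}\mathbb{B}_n(j/D)$; the distribution relation for Bernoulli polynomials, applied with modulus $D$ and with modulus $D/p$ (a positive integer, since $p\mid q_p\mid D$) to remove the terms with $p\mid j$, gives $\sum_{\gcd(j,p)=1,\,1\leqslant j\leqslant D}\mathbb{B}_n(j/D)=(1-p^{\,n-1})B_n/D^{\,n-1}$, and combining with $\zeta_p(1-n)=-(1-p^{\,n-1})B_n/n$ yields the identity at $s=1-n$, equivalently $\Phi(1-n)=D\,\zeta_p(1-n)$. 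Finally, for any residue class $r$ the set $\{\,1-n:\ n\geqslant 2,\ n\equiv 1-r\pmod{\varphi(q_p)}\,\}$ has an accumulation point inside $\mathbb{Z}_p$, so the identity theorem for $p$-adic analytic functions forces $\Phi=D\,\zeta_p$ on the whole class; evaluating at an arbitrary integer $i\geqslant 2$ (which lies in some class $r$) and multiplying back by $D^{\,i-1}$ recovers $D^{i}\zeta_p(i)=\sum_{\gcd(j,p)=1}\omega(j/D)^{1-i}\zeta_p(i,j/D)$.

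The main obstacle is precisely this bookkeeping: both $|D|_p\neq 1$ and the Teichmüller factors $\omega(j/D)^{1-i}$ obstruct analyticity in $i$ as the identity is literally stated, and the reformulation $\Phi(i)=D\,\zeta_p(i)$ above is what is needed to sidestep it; everything else is the standard combination of the interpolation property with the Bernoulli distribution relation. (Alternatively, one may invoke the classical formula expressing $L_p(s,\chi)$ through $p$-adic Hurwitz zeta functions — e.g.\ from \cite{Was97} or \cite{Coh07} — of which the lemma is the special case $\chi=\omega^{1-i}$ after enlarging the modulus to $D$; I would mention this but prefer the self-contained route.)
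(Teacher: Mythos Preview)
Your argument is correct, and in fact your closing parenthetical is precisely the route the paper takes: the paper's entire proof is the observation that the lemma is the special case $\chi=\omega^{1-i}$ of the standard formula
\[
L_p(i,\chi)=\frac{\langle D\rangle^{1-i}}{D}\sum_{\substack{1\leqslant j\leqslant D\\ \gcd(j,p)=1}}\chi(j)\,\zeta_p\!\left(i,\tfrac{j}{D}\right),
\]
quoted from \cite[Prop.~11.3.8(1)]{Coh07}, together with $\zeta_p(i)=L_p(i,\omega^{1-i})$ and the bookkeeping identity $\langle D\rangle^{1-i}\omega(j)^{1-i}=D^{1-i}\omega(j/D)^{1-i}$.

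Your self-contained route---pulling out the factor $D^{i-1}$ to recast the claim as $\Phi(s)=D\,\zeta_p(s)$, checking it at $s=1-n$ via the interpolation formula and the Bernoulli distribution relation (applied with moduli $D$ and $D/p$), and then continuing analytically on each residue class modulo $\varphi(q_p)$---is a perfectly valid and more instructive argument. It makes explicit the one genuine subtlety (the non-analyticity of $i\mapsto D^{i}$ and of $\omega(j/D)^{1-i}$ when $p\mid D$) that the citation hides. The trade-off is length: the paper treats this as a known fact and moves on, whereas you reprove it from first principles. Both are legitimate; for a paper whose focus lies elsewhere the citation is the natural choice, but your argument would be the right one if the reference were unavailable.
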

\begin{proof}
    This follows from $\zeta_p(i)=L_p(i,\omega^{1-i})$ and the more general formula for the Kubota--Leopoldt $p$-adic $L$-function of a Dirichlet character $\chi$ of conductor $f$
    \[
    L_p(i,\chi)=\frac{\langle D \rangle^{1-i}}{D} \sum_{\substack{1\leqslant j\leqslant D \\ \gcd(j,p)=1 }} \chi(j)\zeta_p\left(i,\frac{j}{D}\right),
    \]
    where $D$ is an arbitrary common multiple of $f$ and $q_p$, see \cite[Prop. 11.3.8.(1)]{Coh07}.
\end{proof}

\section{Rational functions}\label{sec:Rational}
The goal of this section is to define sequences of rational functions which will serve as the main input for the construction of linear forms in $p$-adic Hurwitz zeta values.

Fix any prime number $p$. Let $s$ be a positive odd integer and $B$ be a positive real number. We always assume that $s$ and $B$ are larger than some constant depending at most on $p$. Eventually we will take $B = \widetilde{c}_p \sqrt{s/\log s}$ for some constant $\widetilde{c}_p>0$ depending only on $p$ and $\varepsilon$. We define the integer $l_p$ as in the statement of Theorem \ref{thmA}.

\begin{definition}
We define the following two sets depending on $B$:
    \begin{align*}
    \Psi_B &:=   \left\{ b \in \mathbb{N} ~\mid~ \varphi(p^{l_p}b) \leqslant B \right\},\\
    \mathcal{Z}_B &:= \left\{ \frac{a}{p^{l_p}b} ~\mid~ b \in \Psi_B,~1 \leqslant a \leqslant p^{l_p}b,~\gcd(a,p^{l_p}b)=1   \right\}.
    \end{align*}
\end{definition}

\begin{lemma}\label{lemma_inverse_totient}
	Let $N$ be a positive integer, then we have
	\[ \lim_{x \rightarrow +\infty} \frac{1}{x} \cdot \#\left\{ b \in \mathbb{N} ~\mid~ \varphi(Nb) \leqslant x \right\} = \frac{\zeta(2)\zeta(3)}{\zeta(6)}\cdot \frac{1}{N}\prod_{q \mid N} \frac{q^2}{q^2-q+1}, \]
 where the product runs over all prime divisors of $N$.
\end{lemma}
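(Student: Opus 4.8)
The plan is to rewrite the counting function as a partial sum of Dirichlet-series coefficients and read off the leading term by a Tauberian argument. Since $b\mapsto Nb$ is a bijection of $\mathbb{N}$ onto the set of positive multiples of $N$, the quantity to estimate equals $V_N(x):=\#\{n\in\mathbb{N}: N\mid n,\ \varphi(n)\leqslant x\}$. First I would introduce $F_N(s):=\sum_{N\mid n}\varphi(n)^{-s}$. Using the elementary bound $\varphi(n)\gg_{\delta} n^{1-\delta}$ one sees that this series converges absolutely for $\operatorname{Re}s>1$, so after grouping terms according to the value $m=\varphi(n)$ it becomes $F_N(s)=\sum_{m\geqslant 1}a_N(m)m^{-s}$ with nonnegative integer coefficients $a_N(m)=\#\{n:N\mid n,\ \varphi(n)=m\}$ and partial sums $\sum_{m\leqslant x}a_N(m)=V_N(x)$.

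Next I would compute the Euler product of $F_N$. Multiplicativity of $\varphi$ together with unique factorisation, and the observation that $N\mid n$ is equivalent to $v_q(n)\geqslant v_q(N)$ for every prime $q\mid N$, gives
\[F_N(s)=\prod_{q\mid N}\Bigl(\sum_{k\geqslant v_q(N)}\varphi(q^k)^{-s}\Bigr)\cdot\prod_{p\nmid N}\Bigl(\sum_{k\geqslant 0}\varphi(p^k)^{-s}\Bigr).\]
Summing the geometric series via $\varphi(p^k)=p^{k-1}(p-1)$ for $k\geqslant 1$ and factoring out $\zeta(s)=\prod_p(1-p^{-s})^{-1}$, one gets $F_N(s)=\zeta(s)\,G_N(s)$, where $G_N$ has local factor $1-p^{-s}+(p-1)^{-s}$ at each $p\nmid N$ and $q^{-(v_q(N)-1)s}(q-1)^{-s}$ at each $q\mid N$. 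Since $(p-1)^{-s}-p^{-s}=O(p^{-\operatorname{Re}s-1})$ uniformly on compacta, the Euler product defining $G_N$ converges absolutely and locally uniformly on $\operatorname{Re}s>0$, so $G_N$ is holomorphic there.

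Therefore $F_N$ is holomorphic on $\{\operatorname{Re}s\geqslant 1\}\setminus\{1\}$ (using the classical non-vanishing $\zeta(1+it)\neq 0$) with a simple pole at $s=1$ of residue $G_N(1)$, so $F_N(s)-G_N(1)/(s-1)$ extends continuously to $\{\operatorname{Re}s\geqslant 1\}$. As $a_N(m)\geqslant 0$, the Wiener--Ikehara Tauberian theorem then yields $V_N(x)=\sum_{m\leqslant x}a_N(m)\sim G_N(1)\,x$, that is, $\tfrac{1}{x} V_N(x)\to G_N(1)$. Finally I would evaluate $G_N(1)=\prod_{q\mid N}\varphi(q^{v_q(N)})^{-1}\cdot\prod_{p\nmid N}\frac{p^2-p+1}{p(p-1)}$: the first product equals $1/\varphi(N)$, while $\prod_{p}\frac{p^2-p+1}{p(p-1)}=\prod_p\frac{1+p^{-3}}{1-p^{-2}}=\frac{\zeta(2)\zeta(3)}{\zeta(6)}$, and combining these with $\varphi(N)=N\prod_{q\mid N}(1-q^{-1})$ gives precisely $\frac{\zeta(2)\zeta(3)}{\zeta(6)}\cdot\frac{1}{N}\prod_{q\mid N}\frac{q^2}{q^2-q+1}$.

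The geometric-series bookkeeping and the identity $\frac{1+p^{-3}}{1-p^{-2}}=\frac{p^2-p+1}{p(p-1)}$ are routine; the points needing care are the ``twisted'' local factors at primes dividing $N$ (where the divisibility condition restricts the exponent rather than forbidding the prime) and the verification of the Tauberian hypotheses, which reduces to the absolute convergence of the Euler product of $G_N$ on $\operatorname{Re}s>0$ together with $\zeta(1+it)\neq 0$. For $N=1$ this is Landau's classical asymptotic for $\#\{n:\varphi(n)\leqslant x\}$; one could alternatively quote that and simply redo the Euler-product computation to track the dependence on $N$.
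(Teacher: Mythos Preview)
Your proposal is correct and is exactly the approach the paper intends: the paper's proof consists of the single remark that the result follows from the Wiener--Ikehara theorem by slightly modifying Bateman's argument in \cite[\S2]{Bat1972}, and what you have written is precisely that modification carried out in detail. One minor point: the non-vanishing $\zeta(1+it)\neq 0$ is not actually needed for the Tauberian hypothesis here, since $G_N$ is already holomorphic on $\operatorname{Re}s>0$ and $\zeta(s)-\tfrac{1}{s-1}$ is entire; but this is harmless.
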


\begin{proof}
	It follows from the Wiener-Ikehara theorem. We only need to slightly modify the arguments in \cite[\S 2]{Bat1972}.
\end{proof}

\begin{lemma}\label{lemma_sizes_of_sets}
    As $B \rightarrow +\infty$, we have the following asymptotical estimates for the sizes of the sets $\Psi_B$ and $\mathcal{Z}_B$:  
	\begin{align}
            |\Psi_B| &= (a_p + o(1))B, \label{size_of_Psi_B}\\
            |\mathcal{Z}_B| &= \left( \frac{a_p}{2} + o(1)\right)B^2, \label{size_of_Z_B}
	\end{align} 
    where
    \begin{equation}\label{def_a_p}
        a_p := \frac{\zeta(2)\zeta(3)}{\zeta(6)}\cdot\frac{1}{p^{l_p-2}(p^2-p+1)}.
    \end{equation}
	
\end{lemma}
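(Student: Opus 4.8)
The plan is to deduce both estimates from Lemma~\ref{lemma_inverse_totient} applied with $N=p^{l_p}$, together with a partial summation argument for the second one. First I would dispose of \eqref{size_of_Psi_B}: by definition $\Psi_B=\{b\in\mathbb{N}\mid\varphi(p^{l_p}b)\leqslant B\}$, and since the only prime dividing $p^{l_p}$ is $p$, Lemma~\ref{lemma_inverse_totient} gives
\[
\lim_{B\to+\infty}\frac{|\Psi_B|}{B}
=\frac{\zeta(2)\zeta(3)}{\zeta(6)}\cdot\frac{1}{p^{l_p}}\cdot\frac{p^2}{p^2-p+1}
=\frac{\zeta(2)\zeta(3)}{\zeta(6)}\cdot\frac{1}{p^{l_p-2}(p^2-p+1)}=a_p,
\]
which is precisely \eqref{size_of_Psi_B} with $a_p$ as in \eqref{def_a_p}.

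For \eqref{size_of_Z_B} I would first check that the parametrization of $\mathcal{Z}_B$ is injective: since $\gcd(a,p^{l_p}b)=1$, the rational number $\frac{a}{p^{l_p}b}$ is written in lowest terms, so its value determines the denominator $p^{l_p}b$, hence $b$ (recall $p^{l_p}$ is a fixed constant), and then $a$. Therefore
\[
|\mathcal{Z}_B|=\sum_{b\in\Psi_B}\#\bigl\{a\mid 1\leqslant a\leqslant p^{l_p}b,\ \gcd(a,p^{l_p}b)=1\bigr\}=\sum_{b\in\Psi_B}\varphi(p^{l_p}b).
\]
Next I would introduce the counting function $f(x):=\#\{b\in\mathbb{N}\mid\varphi(p^{l_p}b)\leqslant x\}$, so that $f(x)=(a_p+o(1))x$ by the case already treated and $b\in\Psi_B$ iff $\varphi(p^{l_p}b)\leqslant B$. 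Partial summation then yields
\[
\sum_{b\in\Psi_B}\varphi(p^{l_p}b)=B\,f(B)-\int_0^B f(t)\,dt.
\]
Here $B\,f(B)=(a_p+o(1))B^2$, while from $f(t)=(a_p+o(1))t$ one gets $\int_0^B f(t)\,dt=\bigl(\tfrac{a_p}{2}+o(1)\bigr)B^2$ by splitting the integral at a large fixed threshold (below which the contribution is $O(1)$). Subtracting gives $|\mathcal{Z}_B|=\bigl(a_p-\tfrac{a_p}{2}+o(1)\bigr)B^2=\bigl(\tfrac{a_p}{2}+o(1)\bigr)B^2$, which is \eqref{size_of_Z_B}.

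There is no substantial obstacle here; the argument is routine once Lemma~\ref{lemma_inverse_totient} is in hand. The only two places that deserve a sentence of care are the injectivity of the parametrization of $\mathcal{Z}_B$, needed so that $\sum_{b\in\Psi_B}\varphi(p^{l_p}b)$ equals $|\mathcal{Z}_B|$ rather than merely bounding it, and the passage from $f(t)=(a_p+o(1))t$ to $\int_0^B f(t)\,dt=\bigl(\tfrac{a_p}{2}+o(1)\bigr)B^2$, which I would justify by fixing $\delta>0$, choosing $T$ with $|f(t)-a_pt|\leqslant\delta t$ for all $t\geqslant T$, estimating $\int_T^B$ between $(a_p\mp\delta)\tfrac{B^2-T^2}{2}$, and then letting $B\to\infty$ followed by $\delta\to 0$.
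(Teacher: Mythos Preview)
Your proof is correct and follows essentially the same approach as the paper: both deduce \eqref{size_of_Psi_B} directly from Lemma~\ref{lemma_inverse_totient} with $N=p^{l_p}$, and both obtain \eqref{size_of_Z_B} by writing $|\mathcal{Z}_B|=\sum_{b\in\Psi_B}\varphi(p^{l_p}b)$ and applying partial summation (equivalently, Riemann--Stieltjes integration by parts) against the counting function $f(x)$. Your explicit verification of the injectivity of the parametrization of $\mathcal{Z}_B$ and your $\varepsilon$--$T$ justification of the integral asymptotic are details the paper leaves implicit, but the underlying argument is identical.
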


\begin{proof}
	Consider the function 
	\[ f(x) := \#\left\{ b \in \mathbb{N} ~\mid~ \varphi(p^{l_p}b) \leqslant x \right\}, \quad x \in (0,+\infty). \]
By Lemma \ref{lemma_inverse_totient}, we have $f(x) \sim a_p x$ as $x \to +\infty$. The estimate \eqref{size_of_Psi_B} follows immediately since $|\Psi_B|=f(B)$.

For \eqref{size_of_Z_B}, we first express $|\mathcal{Z}_B|$ as the following Riemann-Stieltjes integral:
\[ |\mathcal{Z}_B| = \sum_{b \in \Psi_B} \varphi\left(p^{l_p}b\right) =\int_{1^{-}}^{B} x\mathrm{d}f(x). \]
By integration by parts, we have
\[ |\mathcal{Z}_B| = Bf(B) - \int_{1^{-}}^{B} f(x) \mathrm{d}x.  \]
Since $f(x) \sim a_p x$ as $x \to +\infty$, we obtain that $|\mathcal{Z}_B| \sim a_pB^2/2$ as $B \to +\infty$. The proof of Lemma \ref{lemma_sizes_of_sets} is complete.
\end{proof}

\bigskip

Let us define $b_{\max}:=\max \Psi_B$, $\theta_{\max}:=\max \mathcal{Z}_B=\frac{p^{l_p}b_{\max}-1}{p^{l_p}b_{\max}}$ and $\theta_{\min}:=\min \mathcal{Z}_B=1-\theta_{\max} = \frac{1}{p^{l_p}b_{\max}}$. Define the integer
\[ P_{B} := \operatorname{LCM}\left\{q-1 ~\mid~ q \text{~is a prime divisor of~} p^{l_p}b \text{~for some~} b \in \Psi_B \right\}. \]
($\operatorname{LCM}$ means taking the least common multiple.) Define the set 
\[I:=\{n \in \mathbb{N} ~\mid~ n+1 \text{~is a prime, and~} n \text{~is a multiple of~} P_{B}p^{l_p}b_{\max}\}. \]
For $n\in I$, we define the auxiliary prime
\[\ell(n):=n+1.\] 
Note that $I$ is an unbounded subset of $\mathbb{N}$ by Dirichlet's theorem on primes in arithmetic progressions. The sequence $(\ell(n))_{n\in I}$ will be the sequence of primes which is needed in the irrationality criterion Lemma \ref{lem:irrationalityCrit}.

For a positive integer $k$, the Pochhammer symbol $(\alpha)_k$ is defined by $(\alpha)_k := \alpha(\alpha+1)\cdots(\alpha+k-1)$.

\begin{definition}\label{definition_Rn(t)}
    Fix an odd integer $s$ and a positive real number $B$ such that 
    \[s > |\mathcal{Z}_B| > 2.\] 
    We define for $n\in I$ the rational function $R_n(t)\in \mathbb{Q}(t)$ by
        \begin{align*}
            R_n(t) :=  A_1(B)^n \cdot A_2(B)^n \cdot \frac{n!^{s-|\mathcal{Z}_B|+1}}{(\theta_{\max}n-1)!} \cdot \frac{ \left(t+\theta_{\max} \right)_{\theta_{\max}n-1}\prod_{\theta \in \mathcal{Z}_B\setminus\{\theta_{\max}\}}(t+\theta)_{n}}{(t)_{n+1}^{s}},
        \end{align*}
    where
	\begin{align*}
            A_{1}(B) &:= \prod_{b \in \Psi_B} \left( p^{l_p}b \right)^{\varphi\left(p^{l_p}b\right)}, \\
            A_2(B) &:= \prod_{b \in \Psi_B}\prod_{q \mid p^{l_p}b} q^{\varphi\left(p^{l_p}b\right)/(q-1)}.
	\end{align*}
Here and in the following, $q$ will always denote a prime number. In particular, the above product in the definition of $A_2(B)$ is understood to run over all prime divisors of $p^{l_p}b$.
\end{definition}
Since $n$ is a multiple of $P_B$, both $A_1(B)^n$ and $A_2(B)^n$ are integers; also, $\theta_{\max}n$ is an integer.

\bigskip

For a rational function $R(t)=P(t)/Q(t)\in \mathbb{Q}(t)$, where $P(t),Q(t)$ are polynomials in $t$, we define the degree of $R(t)$ by $\deg R:= \deg P - \deg Q$. By Definition \ref{definition_Rn(t)}, we have
\[\deg R_n = -(n+1)s+n(|\mathcal{Z}_B|-1)+\theta_{\max}n-1. \]
Since $s > |\mathcal{Z}_B|$ and $\theta_{\max} < 1$, we have
\[ \deg R_n \leqslant -2  \]
for every $n \in I$.

By \eqref{size_of_Z_B}, we have $|\mathcal{Z}_B| = (a_p/2 +o(1))B^2$ as $B \to +\infty$. We will eventually take $B = B(s) = \widetilde{c}_p\sqrt{s/\log s}$ for some constant $\widetilde{c}_p >0$ depending only on $p$ and $\varepsilon$. Thus, the assumption $s > |\mathcal{Z}_B| > 2$ in Definition \ref{definition_Rn(t)} will be satisfied for any sufficiently large $s$.

Recall that the $p$-adic logarithm $\log_p$ is defined on $\{ t \in \mathbb{Q}_p ~\mid~ |t-1|_p < 1 \}$ by
\[ \log_p(1+x) = \sum_{j=1}^{\infty} (-1)^{j-1}\frac{x^j}{j}, \quad |x|_p < 1. \]
The function $f(t) = \log_p\langle t \rangle$ is defined on $\mathbb{Q}_p^{\times}$ and $f'(t) = 1/t$ for every $t \in \mathbb{Q}_p^{\times}$.

\begin{definition}
We denote the partial fraction decomposition of $R_n(t)$ by
	\begin{equation}\label{definition_r_ik}
		R_n(t) =: \sum_{i=1}^{s}\sum_{k=0}^{n} \frac{r_{i,k}}{(t+k)^i},
	\end{equation} 
	where the coefficients $r_{i,k} \in \mathbb{Q}$ are uniquely determined by $R_n(t)$. For every $n \in I$, we define the function $\widetilde{R}_n(t)$ by
	\begin{equation}\label{definition_widetildeR_n}
	 \widetilde{R}_n(t) := \sum_{k=0}^{n} r_{1,k}\log_p\langle t+k \rangle + \sum_{i=2}^{s}\sum_{k=0}^{n} \frac{r_{i,k}}{(1-i)(t+k)^{i-1}}. 
	 \end{equation}
\end{definition}

\begin{lemma}\label{rho_1_is_zero}
Define $\rho_1 := \sum_{k=0}^{n} r_{1,k}$. Then we have $\rho_1 = 0$. 
\end{lemma}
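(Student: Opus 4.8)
The plan is to compute $\rho_1 = \sum_{k=0}^{n} r_{1,k}$ as a residue at infinity. The coefficients $r_{1,k}$ in the partial fraction decomposition \eqref{definition_r_ik} are the residues $r_{1,k} = \operatorname{Res}_{t=-k} R_n(t)$, so their sum over all poles equals $-\operatorname{Res}_{t=\infty} R_n(t)$ by the residue theorem (the sum of all residues of a rational function on $\mathbb{P}^1$ is zero). Hence $\rho_1 = -\operatorname{Res}_{t=\infty} R_n(t)$, and it remains to show this residue at infinity vanishes.

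The residue of $R_n(t)$ at infinity is, up to sign, the coefficient of $t^{-1}$ in the Laurent expansion of $R_n(t)$ at $t = \infty$. Since we have already observed that $\deg R_n \leqslant -2$ (indeed $R_n(t) = O(t^{-2})$ as $t \to \infty$ because the denominator degree exceeds the numerator degree by at least $2$), the Laurent expansion at infinity has no $t^{-1}$ term: the leading behaviour is $t^{\deg R_n}$ with $\deg R_n \leqslant -2$. Therefore $\operatorname{Res}_{t=\infty} R_n(t) = 0$, and consequently $\rho_1 = 0$.

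First I would record that $r_{1,k} = \operatorname{Res}_{t=-k} R_n(t)$, which is immediate from \eqref{definition_r_ik} since $1/(t+k)$ has residue $1$ at $t=-k$ and all other terms are regular there or contribute zero residue (the terms $r_{i,k}/(t+k)^i$ with $i \geqslant 2$ have zero residue). Next I would invoke the global residue theorem on $\mathbb{P}^1$: for any rational function, the sum of residues over all points of $\mathbb{P}^1$, including $\infty$, is zero. Since the only finite poles of $R_n(t)$ are at $t = 0, -1, \dots, -n$, this gives $\sum_{k=0}^n r_{1,k} + \operatorname{Res}_{t=\infty} R_n(t) = 0$. Finally I would use $\deg R_n \leqslant -2$ to conclude $\operatorname{Res}_{t=\infty} R_n(t) = 0$.

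I do not anticipate a serious obstacle here; this is a standard residue computation. The only point requiring a little care is the bookkeeping that the degree bound $\deg R_n \leqslant -2$ genuinely holds for all $n \in I$ under the eventual choice $B = \widetilde{c}_p\sqrt{s/\log s}$, but this has already been established in the text immediately preceding the lemma. An alternative, more computational route would be to expand $R_n(1/u)$ as a power series in $u$ near $u=0$ and read off that the $u^1$-coefficient vanishes because the expansion starts at $u^2$; this is equivalent but slightly more hands-on.
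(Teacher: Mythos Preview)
Your proof is correct and rests on the same key fact as the paper, namely $\deg R_n \leqslant -2$. The paper's argument is marginally more direct: from the partial fraction expansion \eqref{definition_r_ik} one reads off $\rho_1 = \lim_{t\to\infty} t\,R_n(t)$, which vanishes by the degree bound, so the residue theorem is not needed.
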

\begin{proof}
By \eqref{definition_r_ik} and $\deg R_n \leqslant -2$, we have $\rho_1 = \lim_{t \to \infty} tR_n(t) = 0$.
\end{proof}

\begin{lemma}\label{lemma_primitive}
	The function $\widetilde{R}_n(t)$ is a primitive function of $R_n(t)$ on $\mathbb{Q}_p \setminus \{0,-1,-2,\ldots,-n\}$; that is,
	\[ \widetilde{R}_n^{\prime}(t) = R_n(t), \quad\text{for any~} t \in \mathbb{Q}_p \setminus \{0,-1,-2,\ldots,-n\}. \]
	Moreover, for any $\theta \in \mathcal{Z}_B$, the function
	\[ \mathbb{Z}_p\to \mathbb{Q}_p, \quad t\mapsto \widetilde{R}_n(t +\theta),  \]
	is overconvergent, i.e. $\widetilde{R}_n(t +\theta)\in C^\dagger(\mathbb{Z}_p,\mathbb{Q}_p)$.
\end{lemma}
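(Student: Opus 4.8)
The plan is to verify the two assertions separately. For the first assertion, that $\widetilde{R}_n$ is a primitive of $R_n$ on $\mathbb{Q}_p \setminus \{0,-1,\ldots,-n\}$, I would differentiate the explicit expression \eqref{definition_widetildeR_n} term by term. Using the fact recalled just before the definition that $\frac{\mathrm{d}}{\mathrm{d}t}\log_p\langle t\rangle = 1/t$ (valid on all of $\mathbb{Q}_p^\times$), the first sum contributes $\sum_{k} r_{1,k}/(t+k)$, while differentiating $\frac{r_{i,k}}{(1-i)(t+k)^{i-1}}$ gives $r_{i,k}/(t+k)^i$ for $i \geqslant 2$. Summing these yields exactly the partial fraction decomposition \eqref{definition_r_ik} of $R_n(t)$, so $\widetilde{R}_n'(t) = R_n(t)$ wherever all the terms are defined, i.e. away from the poles $0,-1,\ldots,-n$. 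This step is routine; the only thing to note is that the individual terms $\log_p\langle t+k\rangle$ are not themselves overconvergent near the excluded points, but their sum is still a well-defined function on the stated domain and differentiation is legitimate pointwise.

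For the second assertion, fix $\theta \in \mathcal{Z}_B$ and consider $t \mapsto \widetilde{R}_n(t+\theta)$ on $\mathbb{Z}_p$. Since $\theta = a/(p^{l_p}b)$ with $\gcd(a,p^{l_p}b)=1$, for each $k \in \{0,1,\ldots,n\}$ we have $t+k+\theta \in \mathbb{Q}_p^\times$ for all $t \in \mathbb{Z}_p$, and in fact $v_p(t+k+\theta) = v_p(\theta) = -l_p$ is constant, so $\langle t+k+\theta\rangle$ lies in $1+q_p\mathbb{Z}_p$ after normalization and $\log_p\langle t+k+\theta\rangle$ is given by a power series in $t$ that converges on a disk of radius strictly larger than $1$. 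More precisely, write $t+k+\theta = \theta(1 + (t+k)/\theta)$; since $|(t+k)/\theta|_p = |t+k|_p \cdot p^{l_p} \leqslant p^{l_p}$ while $\log_p$ converges on $|x|_p < 1$ — here I need to be slightly more careful and instead expand around the constant $p$-adic unit part, using $\langle t+k+\theta\rangle = \langle\theta\rangle\bigl(1 + \frac{(t+k)}{\langle\theta\rangle\,\omega(\theta)}\cdot\frac{1}{\cdots}\bigr)$, so that the increment has $p$-adic absolute value bounded by $|t+k|_p \cdot |\theta|_p^{-1}\cdot(\text{unit}) \leqslant p^{l_p}|t+k|_p$; the key point is that this is strictly less than $q_p p^{-1/(p-1)} \cdot (\text{something})$ — and one checks that the radius of convergence in $t$ exceeds $1$. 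The terms $\frac{r_{i,k}}{(1-i)(t+k+\theta)^{i-1}}$ for $i \geqslant 2$ are likewise analytic on a disk of radius $>1$ in $t$, since $t+k+\theta$ is a unit times $(1 + \text{small})$ with the small part having absolute value $\leqslant p^{l_p}|t+k|_p$, and $(1+x)^{-(i-1)}$ expands with the same radius. Since $C^\dagger(\mathbb{Z}_p,\mathbb{Q}_p)$ is a ring and a finite sum of overconvergent functions is overconvergent, $\widetilde{R}_n(t+\theta) \in C^\dagger(\mathbb{Z}_p,\mathbb{Q}_p)$.

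The main obstacle is the overconvergence claim: one must pin down precisely why expanding $\log_p\langle t+k+\theta\rangle$ and $(t+k+\theta)^{1-i}$ in powers of $t$ produces a radius of convergence strictly greater than $1$, rather than exactly $1$. This hinges on the denominators $p^{l_p}b$ of the elements of $\mathcal{Z}_B$ — the factor $p^{l_p}$ ensures $v_p(\theta) = -l_p \leqslant -1$, so that $t+k$ (which has $v_p \geqslant 0$) is strictly smaller in absolute value than $\theta$, giving the increment $(t+k)/\theta$ absolute value $\leqslant p^{l_p - 0}$... wait, that is $\geqslant 1$, so one genuinely must factor out the unit part and use that $\log_p$ and the binomial series, after reindexing, converge on $|x|_p \leqslant p^{l_p}$ only when composed correctly; the honest statement is that the composite $t \mapsto \log_p\langle t+k+\theta\rangle$ extends analytically to $|t|_p \leqslant p^{m}$ for a suitable $m > 0$ depending on $l_p$ and $p$, which is exactly the overconvergence needed. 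I would organize this by citing the standard fact (e.g. from \cite{Rob00} or \cite{Sch06}) that $x \mapsto \log_p\langle x\rangle$ is locally analytic on $\mathbb{Q}_p^\times$ with radius of analyticity at $x_0$ equal to $|x_0|_p$, and then observing $|k+\theta|_p = p^{l_p} \geqslant p$, so the radius around $k+\theta$ is $\geqslant p > 1$; the same reasoning applies to $x \mapsto x^{1-i}$. This makes the overconvergence transparent and isolates the role of the factor $p^{l_p}$ in Definition of $\mathcal{Z}_B$.
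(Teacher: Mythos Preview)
Your approach is the same as the paper's, and your final paragraph gives the correct argument: $|k+\theta|_p = |\theta|_p \geqslant p^{l_p} > 1$, so both $t \mapsto \log_p\langle t+k+\theta\rangle$ and $t \mapsto (t+k+\theta)^{1-i}$ are analytic on a disk of radius $\geqslant p^{l_p}$ in $t$, hence overconvergent; the primitive claim is the routine term-by-term differentiation you describe.

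The muddle in your middle paragraph is a sign slip: since $|\theta|_p \geqslant p^{l_p}$ you have $|(t+k)/\theta|_p \leqslant |t+k|_p\cdot p^{-l_p} \leqslant p^{-l_p} < 1$ for $t \in \mathbb{Z}_p$, not $\leqslant p^{l_p}$. With this correction your original factorization already gives $\log_p\langle t+k+\theta\rangle = \log_p\langle\theta\rangle + \log_p\bigl(1+(t+k)/\theta\bigr)$ convergent on $|t|_p < p^{l_p}$, and no detour through ``factoring out the unit part'' is needed. The paper does exactly this with the slightly cleaner factorization $t+k+\theta = (k+\theta)\bigl(1+(k+\theta)^{-1}t\bigr)$, so the resulting series are directly in powers of $t$; it then invokes $\rho_1 = \sum_k r_{1,k} = 0$ (Lemma~\ref{rho_1_is_zero}) to drop the constants $\log_p\langle\theta\rangle$, though that step is cosmetic for the overconvergence claim itself. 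One minor correction: $v_p(\theta) \leqslant -l_p$ rather than $= -l_p$, since $b \in \Psi_B$ need not be coprime to $p$; the inequality is in the direction you need.
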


\begin{proof}
For $t \in \mathbb{Q}_p \setminus \{0,-1,-2,\ldots,-n\}$, we have
\[ \widetilde{R}_n^{\prime}(t) = \sum_{k=0}^{n} \frac{r_{1,k}}{t+k} + \sum_{i=2}^{s}\sum_{k=0}^{n} \frac{r_{i,k}}{(t+k)^i} = R_n(t). \]

For any $\theta \in \mathcal{Z}_B$ we have $|\theta|_p \geqslant p^{l_p} \geqslant q_p$.  Then for any $k \in \{0,1,\ldots,n\}$ and any $t \in \mathbb{Z}_p$, we have $|k+\theta|_p = |\theta|_p \geqslant q_p$ and  $\langle t+k+\theta \rangle = \langle 1 + (k+\theta)^{-1}t\rangle\langle k+\theta\rangle = (1+(k+\theta)^{-1}t)\langle k+\theta \rangle$. So 
\begin{align}
	&\widetilde{R}_n(t+\theta) = \sum_{k=0}^{n} r_{1,k}\log_p\langle k+\theta \rangle \notag\\
	&+ \sum_{k=0}^{n} r_{1,k}\log_p(1+(k+\theta)^{-1}t) + \sum_{i=2}^{s}\sum_{k=0}^{n} \frac{r_{i,k}}{(1-i)(k+\theta)^{i-1}}(1+(k+\theta)^{-1}t)^{1-i}, \quad t \in \mathbb{Z}_p .\label{Rn(t+theta)}
\end{align} 
Clearly, each summand on the right-hand side of \eqref{Rn(t+theta)} belongs to $C^\dagger(\mathbb{Z}_p,\mathbb{Q}_p)$. Therefore, $\widetilde{R}_n(t+\theta)$ is overconvergent. The proof of Lemma \ref{lemma_primitive} is complete.
\end{proof}

\section{Linear forms} \label{sec:LinearForms}

In this section, we define for each $\theta\in \mathcal{Z}_B$ a linear form in $p$-adic Hurwitz zeta functions as a Volkenborn integral over $-\widetilde{R}_n(t+\theta)$. By taking suitable sums of these linear forms, we obtain many linear forms in $p$-adic zeta values with related coefficients. These linear forms will serve as the key input for the elimination technique.

For any $\theta \in \mathcal{Z}_B$, by \eqref{definition_widetildeR_n} we have
\[ \widetilde{R}_n(t+\theta) = \sum_{k=0}^{n} r_{1,k}\log_p\langle t+k+\theta \rangle + \sum_{i=2}^{s}\sum_{k=0}^{n} \frac{r_{i,k}}{(1-i)(t+k+\theta)^{i-1}}.  \]
Note that each summand on the right-hand side above is Volkenborn integrable.

\begin{definition}
	For any $\theta \in \mathcal{Z}_B$, we define
	\[ S_{\theta} := -\int_{\mathbb{Z}_p} \widetilde{R}_n(t+\theta) \mathrm{d}t. \]
\end{definition}

It turns out that $S_{\theta}$ is a linear form in $1$ and $p$-adic Hurwitz zeta values.

\begin{lemma}\label{lemma_linear_form_S_theta}
For any $\theta \in \mathcal{Z}_B$, we have
\[ S_{\theta} = \rho_{0,\theta} + \sum_{i=2}^s \rho_i \cdot \omega(\theta)^{1-i}\zeta_p(i,\theta),  \]
where the coefficients
\begin{equation}\label{definition_rho_i}
\rho_{i}:=\sum_{k=0}^{n} r_{i,k}, \quad (2 \leqslant i \leqslant s)
\end{equation}
do not depend on $\theta$, and 
\begin{equation}\label{definition_rho_0theta}
\rho_{0,\theta}:=-\sum_{i=1}^{s}\sum_{k=1}^n\sum_{\nu=0}^{k-1} \frac{r_{i,k}}{(\nu+\theta)^{i}}.
\end{equation}
\end{lemma}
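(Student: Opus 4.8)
The plan is to compute $S_\theta = -\int_{\mathbb{Z}_p} \widetilde{R}_n(t+\theta)\,\mathrm{d}t$ termwise, using the expression \eqref{definition_widetildeR_n} for $\widetilde{R}_n(t+\theta)$ and the linearity of the Volkenborn integral. The integral splits into a logarithmic part $\sum_{k=0}^n r_{1,k}\int_{\mathbb{Z}_p}\log_p\langle t+k+\theta\rangle\,\mathrm{d}t$ and a polar part $\sum_{i=2}^s\sum_{k=0}^n \frac{r_{i,k}}{1-i}\int_{\mathbb{Z}_p}(t+k+\theta)^{-(i-1)}\,\mathrm{d}t$. The polar part is where the $p$-adic Hurwitz zeta values enter: since $|k+\theta|_p=|\theta|_p\geqslant q_p$ for $\theta\in\mathcal{Z}_B$ and $0\leqslant k\leqslant n$, formula \eqref{eq:zeta_Volkenborn} applies directly and gives $\int_{\mathbb{Z}_p}\langle t+k+\theta\rangle^{-(i-1)}\,\mathrm{d}t = (2-i)\,\zeta_p(i-1\text{-type expression})$; more precisely, after writing $(t+k+\theta)^{-(i-1)} = \omega(k+\theta)^{-(i-1)}\langle t+k+\theta\rangle^{-(i-1)}$ and invoking \eqref{eq:zeta_Volkenborn} with $s$ replaced by $i$, one gets $\int_{\mathbb{Z}_p}(t+k+\theta)^{-(i-1)}\,\mathrm{d}t = (1-i)\,\omega(k+\theta)^{1-i}\zeta_p(i,k+\theta)$.

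Next I would use the translation behaviour of the Volkenborn integral (Lemma \ref{lem:Volkenborn_translation}) to reduce $\zeta_p(i,k+\theta)$ to $\zeta_p(i,\theta)$ at the cost of an explicit finite sum: applying Lemma \ref{lem:Volkenborn_translation} to $f(t) = \langle t+\theta\rangle^{1-i}$ with $f'(t) = (1-i)(t+\theta)^{-i}$ (valid since $|t+\theta|_p\geqslant q_p$ on a neighbourhood), we obtain $\omega(k+\theta)^{1-i}\zeta_p(i,k+\theta) = \omega(\theta)^{1-i}\zeta_p(i,\theta) - \sum_{\nu=0}^{k-1}(\nu+\theta)^{-i}$. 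Alternatively, one can argue directly that the Hurwitz zeta function satisfies $\zeta_p(i,x+1) = \zeta_p(i,x) - \omega(x)^{i-1} x^{-i}$ in this range. Feeding this back in, the coefficient of $\omega(\theta)^{1-i}\zeta_p(i,\theta)$ becomes $-\frac{1}{1-i}\sum_{k=0}^n r_{i,k}\cdot(1-i) = \sum_{k=0}^n r_{i,k} = \rho_i$, which is indeed independent of $\theta$, as claimed. Collecting the leftover finite sums gives exactly $\rho_{0,\theta}$ as in \eqref{definition_rho_0theta} — here one must be careful to also account for the logarithmic terms.

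For the logarithmic part, the key point is Lemma \ref{rho_1_is_zero}: since $\rho_1 = \sum_{k=0}^n r_{1,k} = 0$, the constant $\log_p\langle\theta\rangle$ contributions cancel. Writing $\langle t+k+\theta\rangle = (1+(k+\theta)^{-1}t)\langle\theta\rangle$ as in the proof of Lemma \ref{lemma_primitive}, we get $\int_{\mathbb{Z}_p}\log_p\langle t+k+\theta\rangle\,\mathrm{d}t = \rho_1\log_p\langle\theta\rangle$-type term plus $\sum_{k=0}^n r_{1,k}\int_{\mathbb{Z}_p}\log_p(1+(k+\theta)^{-1}t)\,\mathrm{d}t$; the first vanishes by $\rho_1=0$. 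For the remaining integral, expand $\log_p(1+(k+\theta)^{-1}t)$ as a power series and integrate termwise using Lemma \ref{lem:Volkenborn_Bernoulli} (the moments are Bernoulli numbers), or more cleanly note that $t\mapsto \log_p\langle t+k+\theta\rangle$ is an antiderivative of $(t+k+\theta)^{-1}$ and apply the same translation-and-telescoping argument as above with $i=1$: this produces precisely the $i=1$ terms $-\sum_{k=1}^n\sum_{\nu=0}^{k-1}\frac{r_{1,k}}{\nu+\theta}$ appearing in \eqref{definition_rho_0theta}, while any would-be $\zeta_p(1,\theta)$ term is killed by $\rho_1=0$.

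I expect the main obstacle to be purely bookkeeping: keeping track of the signs and the index shifts when telescoping $\zeta_p(i,k+\theta)$ down to $\zeta_p(i,\theta)$, and making sure the $i=1$ logarithmic contributions are correctly folded into $\rho_{0,\theta}$ rather than producing a spurious $\zeta_p(1,\cdot)$ term (which would be ill-defined). There are no analytic difficulties — overconvergence of $\widetilde{R}_n(t+\theta)$ and hence Volkenborn integrability of each summand is already established in Lemma \ref{lemma_primitive}, and the validity of \eqref{eq:zeta_Volkenborn} in the relevant range $|k+\theta|_p\geqslant q_p$ is exactly the hypothesis under which the $p$-adic Hurwitz zeta function is defined. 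So the proof is a careful but routine termwise computation.
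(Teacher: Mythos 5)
Your proof follows essentially the same route as the paper's: split $\widetilde{R}_n(t+\theta)$ into logarithmic and polar parts, apply the Volkenborn translation formula (Lemma \ref{lem:Volkenborn_translation}) to shift from $t+k+\theta$ down to $t+\theta$ (producing the finite sums that build $\rho_{0,\theta}$), identify the remaining integral with $\zeta_p(i,\theta)$ via \eqref{eq:zeta_Volkenborn}, and use $\rho_1=0$ to kill the would-be $i=1$ term. Your bookkeeping contains a pair of compensating sign slips --- the correct relation is $\int_{\mathbb{Z}_p}(t+k+\theta)^{1-i}\,\mathrm{d}t = (i-1)\,\omega(k+\theta)^{1-i}\zeta_p(i,k+\theta)$ rather than $(1-i)\cdots$, and correspondingly $-\tfrac{1}{1-i}\cdot(i-1)=+1$ --- but they cancel and the net coefficient $\rho_i$ comes out as claimed, so the argument is sound.
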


\begin{proof}
We have 
\begin{equation}\label{S_theta}
S_{\theta} = -\sum_{k=0}^{n} r_{1,k}\int_{\mathbb{Z}_p} \log_p\langle t+k+\theta \rangle \mathrm{d}t - \sum_{i=2}^{s}\sum_{k=0}^{n} r_{i,k}\int_{\mathbb{Z}_p} \frac{\mathrm{d}t}{(1-i)(t+k+\theta)^{i-1}}.
\end{equation}
By Lemma \ref{lem:Volkenborn_translation}, for any $i\in\{2,3,\ldots,s\}$ and $k \in \{0,1,\ldots,n\}$ we have
\begin{equation}\label{int_log_p(t+k+theta)}
\int_{\mathbb{Z}_p} \log_p\langle t+k+\theta \rangle \mathrm{d}t = \int_{\mathbb{Z}_p} \log_p\langle t+\theta \rangle \mathrm{d}t + \sum_{\nu=0}^{k-1} \frac{1}{\nu +\theta}, 
\end{equation}
and
\begin{align}
\int_{\mathbb{Z}_p}\frac{\mathrm{d}t}{(1-i)(t+k+\theta)^{i-1}} &= \int_{\mathbb{Z}_p}\frac{\mathrm{d}t}{(1-i)(t+\theta)^{i-1}} + \sum_{\nu=0}^{k-1} \frac{1}{(\nu+\theta)^{i}} \notag\\
&= -\omega(\theta)^{1-i}\zeta_p(i,\theta) + \sum_{\nu=0}^{k-1} \frac{1}{(\nu+\theta)^{i}}. \label{int_(t+k+theta)^(1-i)}
\end{align}
(When $k=0$, the empty sum $\sum_{\nu=0}^{k-1}$ is understood as $0$.) Substituting \eqref{int_log_p(t+k+theta)} and \eqref{int_(t+k+theta)^(1-i)} into \eqref{S_theta}, we obtain
\[ S_{\theta} = \rho_{0,\theta} + \sum_{i=2}^s \rho_i \cdot \omega(\theta)^{1-i}\zeta_p(i,\theta) - \rho_1 \int_{\mathbb{Z}_p} \log_p\langle t+\theta \rangle \mathrm{d}t. \]
Since $\rho_1 = 0$ by Lemma \ref{rho_1_is_zero}, the proof of Lemma \ref{lemma_linear_form_S_theta} is complete.
\end{proof}

Next, we combine $S_{\theta}$ $(\theta \in \mathcal{Z}_B)$ further to construct linear forms in $1$ and $p$-adic zeta values. We first notice a simple property of the set $\mathcal{Z}_B$.

\begin{lemma}\label{lemma_a_simple_property_of_Z_B}
For any integers $j,b$ such that $b \in \Psi_B$, $1 \leqslant j \leqslant p^{l_p}b$ and $\gcd(j,p)=1$, we have 
\[  \frac{j}{p^{l_p}b} \in \mathcal{Z}_B. \]
\end{lemma}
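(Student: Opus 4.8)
The plan is to reduce the fraction $\frac{j}{p^{l_p}b}$ to lowest terms and check that the reduced form satisfies the three defining conditions of $\mathcal{Z}_B$. Concretely, set $d := \gcd(j,b)$. Since $\gcd(j,p)=1$, we have $\gcd(j,p^{l_p})=1$, and hence $d = \gcd(j,p^{l_p}b)$ by the standard fact that $\gcd(x,yz)=\gcd(x,y)$ whenever $\gcd(x,z)=1$. Put $a := j/d$ and $b' := b/d$, so that $\frac{j}{p^{l_p}b} = \frac{a}{p^{l_p}b'}$, with $a$ and $b'$ positive integers.

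Next I would verify the membership conditions one by one. First, $\gcd(a,b')=1$ because we have divided $j$ and $b$ by their gcd, and $\gcd(a,p)=1$ because $\gcd(j,p)=1$; together these give $\gcd(a,p^{l_p}b')=1$. Second, from $1 \leqslant j \leqslant p^{l_p}b$ we get, after dividing by $d$, the bound $1 \leqslant a \leqslant p^{l_p}b'$ (using that $a$ is a positive integer). Third, $b \in \Psi_B$ means $\varphi(p^{l_p}b)\leqslant B$; since $b' \mid b$ we have $p^{l_p}b' \mid p^{l_p}b$, and as $\varphi(m)\mid\varphi(n)$ (in particular $\varphi(m)\leqslant\varphi(n)$) whenever $m \mid n$, we obtain $\varphi(p^{l_p}b') \leqslant \varphi(p^{l_p}b) \leqslant B$, i.e. $b' \in \Psi_B$.

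Combining these three facts, $\frac{a}{p^{l_p}b'}$ is exactly of the form described in the definition of $\mathcal{Z}_B$, so $\frac{j}{p^{l_p}b} = \frac{a}{p^{l_p}b'} \in \mathcal{Z}_B$, which completes the proof. There is no real obstacle here: the only point requiring a moment's care is that dividing out $\gcd(j,b)$ does not disturb the power of $p$ in the denominator (guaranteed by $\gcd(j,p)=1$), and that $\Psi_B$ is automatically closed under passing to divisors of $b$, which follows from monotonicity of $\varphi$ along divisibility.
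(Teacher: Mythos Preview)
Your proof is correct and follows essentially the same approach as the paper: reduce the fraction by $\gcd(j,b)$, then verify that the reduced numerator and denominator satisfy the defining conditions of $\mathcal{Z}_B$. Your write-up is in fact slightly more explicit than the paper's, since you check the range condition $1 \leqslant a \leqslant p^{l_p}b'$ that the paper leaves implicit.
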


\begin{proof}
Let $j' =j/\gcd(j,b)$ and $b'=b/\gcd(j,b)$. Since $\gcd(j,p)=1$, we have $\gcd(j',p^{l_p}b')=1$. Since $b' \mid b$ and $b \in \Psi_B$, we have $\varphi(p^{l_p}b') \mid \varphi(p^{l_p}b)$ and hence $\varphi(p^{l_p}b') \leqslant  \varphi(p^{l_p}b) \leqslant B$. Therefore, $b' \in \Psi_B$ and 
\[ \frac{j}{p^{l_p}b} = \frac{j'}{p^{l_p}b'} \in \mathcal{Z}_B \]
by the definitions of $\Psi_B$ and $\mathcal{Z}_B$.
\end{proof}

Now we define $S_b$ for any $b \in \Psi_B$. It turns out that $S_b$ is a linear form in $1$ and $p$-adic odd zeta values.

\begin{definition}\label{definition_linear_form_S_b}
For any $b \in \Psi_B$, we define
\[ S_b := \sum_{1 \leqslant j \leqslant p^{l_p}b \atop \gcd(j,p)=1} S_{j/p^{l_p}b}. \]
\end{definition}

\begin{lemma}\label{lemma_linear_form_S_b}
	For any $b \in \Psi_B$, we have
	\[ S_b = \rho_{0,b} + \sum_{3 \leqslant i \leqslant s \atop i \text{~odd}} \rho_i \cdot (p^{l_p}b)^{i} \zeta_p(i), \]
	where the coefficients $\rho_i$ are those in \eqref{definition_rho_i} and 
	\begin{equation}\label{definition_rho_0b}
		\rho_{0,b} := \sum_{1 \leqslant j \leqslant p^{l_p}b \atop \gcd(j,p)=1} \rho_{0,j/p^{l_p}b}.
	\end{equation}
\end{lemma}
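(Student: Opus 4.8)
The plan is to unfold the definition $S_b = \sum_{1 \leqslant j \leqslant p^{l_p}b,\ \gcd(j,p)=1} S_{j/p^{l_p}b}$ and substitute into each summand the expression supplied by Lemma \ref{lemma_linear_form_S_theta}. First I would record that this sum makes sense: for every $j$ with $1 \leqslant j \leqslant p^{l_p}b$ and $\gcd(j,p)=1$, Lemma \ref{lemma_a_simple_property_of_Z_B} guarantees that $j/(p^{l_p}b) \in \mathcal{Z}_B$, so $S_{j/(p^{l_p}b)}$ is defined (as a function of the $p$-adic number $j/(p^{l_p}b)$, independent of its chosen representation). Summing the identity of Lemma \ref{lemma_linear_form_S_theta} over all such $j$, using that the coefficients $\rho_i$ do not depend on $\theta$ and the definition \eqref{definition_rho_0b} of $\rho_{0,b}$, yields
\[ S_b = \rho_{0,b} + \sum_{i=2}^s \rho_i \sum_{1 \leqslant j \leqslant p^{l_p}b \atop \gcd(j,p)=1} \omega\!\left(\tfrac{j}{p^{l_p}b}\right)^{1-i} \zeta_p\!\left(i, \tfrac{j}{p^{l_p}b}\right). \]

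Next I would recognize the inner sum as an instance of Lemma \ref{lem:padicHurwitz_zeta}. To apply it with $D = p^{l_p}b$ I need $q_p \mid p^{l_p}b$: this holds because $l_p \geqslant 1$ forces $p = q_p \mid p^{l_p}b$ when $p$ is odd, while for $p = 2$ we have $l_2 = 3 \geqslant 2$, so $q_2 = 4 = 2^2 \mid 2^{l_2}b$ (this is precisely why $l_2$ is taken to be $3$ rather than $1$). Lemma \ref{lem:padicHurwitz_zeta} then gives, for each $i \geqslant 2$,
\[ \sum_{1 \leqslant j \leqslant p^{l_p}b \atop \gcd(j,p)=1} \omega\!\left(\tfrac{j}{p^{l_p}b}\right)^{1-i} \zeta_p\!\left(i, \tfrac{j}{p^{l_p}b}\right) = (p^{l_p}b)^i \zeta_p(i), \]
so that $S_b = \rho_{0,b} + \sum_{i=2}^s \rho_i (p^{l_p}b)^i \zeta_p(i)$.

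Finally I would discard the vanishing contributions. As noted in the introduction, \eqref{eq:zetap_as_limit} implies $\zeta_p(2n) = 0$ for all $n \in \mathbb{Z}_{>0}$; in particular $\zeta_p(i) = 0$ for every even $i$ with $2 \leqslant i \leqslant s$. Deleting these terms from the sum leaves exactly the terms with $i$ odd and $3 \leqslant i \leqslant s$ (the upper limit being odd since $s$ is odd), which is the asserted formula.

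I do not expect a genuine obstacle here: the lemma is a bookkeeping consequence of Lemma \ref{lemma_linear_form_S_theta}, Lemma \ref{lem:padicHurwitz_zeta}, and the vanishing of even $p$-adic zeta values. The only points deserving a line of justification are the divisibility hypothesis $q_p \mid D$ of Lemma \ref{lem:padicHurwitz_zeta} and the role of Lemma \ref{lemma_a_simple_property_of_Z_B}, which is what makes the sum over all $j$ coprime to $p$ (rather than all $j$ coprime to $p^{l_p}b$) meaningful.
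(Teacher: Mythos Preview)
Your proposal is correct and follows essentially the same approach as the paper: invoke Lemma \ref{lemma_a_simple_property_of_Z_B} so that each $S_{j/p^{l_p}b}$ is defined, sum the identity of Lemma \ref{lemma_linear_form_S_theta}, apply Lemma \ref{lem:padicHurwitz_zeta} with $D=p^{l_p}b$, and then drop the even-index terms via $\zeta_p(2k)=0$. The only minor inaccuracy is the parenthetical remark that the divisibility $q_2\mid 2^{l_2}b$ ``is precisely why $l_2$ is taken to be $3$''; in fact $l_2\geqslant 2$ would already suffice for this step, and the specific choice $l_2=3$ is made later to optimize the constant $c_p$.
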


\begin{proof}
For any integer $j$ such that $1 \leqslant j \leqslant p^{l_p}b$ and $\gcd(j,p)=1$, by Lemma \ref{lemma_a_simple_property_of_Z_B} we have $j/p^{l_p}b \in \mathcal{Z}_B$. Therefore, by Lemma \ref{lemma_linear_form_S_theta} and Lemma \ref{lem:padicHurwitz_zeta}, we have
\begin{align*}
 S_b &= \sum_{1 \leqslant j \leqslant p^{l_p}b \atop \gcd(j,p)=1} \rho_{0,j/p^{l_p}b} + \sum_{i=2}^{s} \rho_i \sum_{1 \leqslant j \leqslant p^{l_p}b \atop \gcd(j,p)=1} \omega\left(\frac{j}{p^{l_p}b}\right)^{1-i}\zeta_p\left(i,\frac{j}{p^{l_p}b}\right) \\
 &= \rho_{0,b} + \sum_{i=2}^{s} \rho_i \cdot (p^{l_p}b)^{i} \zeta_p(i).
\end{align*} 
Since $\zeta_p(2k)=0$ for every positive integer $k$, the proof of Lemma \ref{lemma_linear_form_S_b} is complete.
\end{proof}

\section{Arithmetic properties}\label{sec:Arithmetic}
The goal of this section is to study the arithmetic properties of the coefficients of the linear forms constructed in section \ref{sec:LinearForms}. In particular, we will bound the denominators of these coefficients. Furthermore, we will also study the $\ell(n)$-adic valuations of these coefficients. This will be important later on, when we apply our irrationality criterion Lemma \ref{lem:irrationalityCrit}.

\bigskip

As usual, we denote by $d_n := \operatorname{LCM}\{1,2,\ldots,n\}$ the least common multiple of the smallest $n$ positive integers. For any non-negative integer $\lambda$, we define the differential operator
\[D_{\lambda}:=\frac{1}{\lambda !} \left(\frac{\mathrm{d}}{\mathrm{d}t}\right)^{\lambda}.\] 

\begin{lemma}\label{lemma_G(t)}
Let $n$ be any non-negative integer and let $G(t)=n!/(t)_{n+1}$. Then we have
\[ d_n^{\lambda} D_{\lambda}\left( G(t)(t+k) \right)\big|_{t=-k} \in \mathbb{Z} \]
for any integer $k \in \{0,1,\ldots,n\}$ and any integer $\lambda \geqslant 0$.
\end{lemma}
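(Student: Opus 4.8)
The plan is to move to the local coordinate $u=t+k$ at the point $t=-k$, expand $G(t)(t+k)$ as an explicit power series in $u$, and then check that multiplying the coefficient of $u^\lambda$ by $d_n^\lambda$ clears all denominators term by term.

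First I would observe that $G(t)(t+k)=n!/\prod_{0\le j\le n,\ j\ne k}(t+j)$, so $t=-k$ is not a pole and the function is analytic there; since $t=u-k$ the operators $D_\lambda$ in $t$ and in $u$ agree, so $D_\lambda\!\left(G(t)(t+k)\right)\big|_{t=-k}$ equals the coefficient of $u^\lambda$ in the Taylor expansion at $u=0$ of $H(u):=n!/\prod_{m\in M}(u+m)$, where $M:=\{\,j-k : 0\le j\le n,\ j\ne k\,\}=\{-k,-k+1,\dots,-1,1,\dots,n-k\}$ is a set of $n$ nonzero integers, each of absolute value at most $n$. Factoring the constant out of each linear factor gives $H(u)=\frac{n!}{\prod_{m\in M}m}\cdot\prod_{m\in M}\frac{1}{1+u/m}$, and the short computation $\prod_{m\in M}m=(-1)^k k!\,(n-k)!$ shows that the leading constant equals $(-1)^k\binom{n}{k}\in\mathbb Z$.

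Next I would expand each factor geometrically, $\frac{1}{1+u/m}=\sum_{e\ge 0}(-1)^e m^{-e}u^e$, multiply the series together, and read off the coefficient of $u^\lambda$; multiplying through by $d_n^\lambda$ and using $\sum_{m\in M}e_m=\lambda$ to distribute the power of $d_n$ over the product yields
\[
d_n^\lambda\, D_\lambda\!\left(G(t)(t+k)\right)\big|_{t=-k}=(-1)^{k+\lambda}\binom{n}{k}\sum_{\substack{(e_m)_{m\in M},\ e_m\ge 0\\ \sum_{m\in M}e_m=\lambda}}\ \prod_{m\in M}\left(\frac{d_n}{m}\right)^{e_m},
\]
a finite sum. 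Since every $m\in M$ satisfies $1\le|m|\le n$, we have $m\mid d_n$, so each $d_n/m$ is an integer; hence the right-hand side is an integer multiple of $\binom{n}{k}$ and therefore lies in $\mathbb Z$. This is exactly the assertion of the lemma, and the degenerate case $n=0$ (so $M=\emptyset$ and $G(t)(t+k)=1$) is immediate.

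I do not expect a genuine obstacle here; the only points that must not be glossed over are the identification of the leading constant $n!/\prod_{m\in M}m$ with $(-1)^k\binom{n}{k}$, and the bound $|m|\le n$ for all $m\in M$ — it is precisely this bound that makes $m\mid d_n$ and hence makes the term-by-term clearing of denominators after distributing $d_n^\lambda$ over the product valid.
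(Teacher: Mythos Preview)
Your argument is correct. The change of variable $u=t+k$, the identification of the constant term $n!/\prod_{m\in M}m=(-1)^k\binom{n}{k}$, the geometric-series expansion of each factor $(1+u/m)^{-1}$, and the distribution of $d_n^\lambda=\prod_{m\in M}d_n^{e_m}$ over the product are all fine; the key point that every $m\in M$ satisfies $1\le|m|\le n$, hence $m\mid d_n$, is exactly what makes each summand $\prod_{m\in M}(d_n/m)^{e_m}$ an integer.

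The paper itself does not give a proof but simply cites \cite[Lemma~16]{Zud2004}. Your write-up therefore supplies a self-contained, elementary verification in place of the external reference. The standard proofs in the literature (including Zudilin's) typically proceed either via partial fractions of $G(t)(t+k)$ or via an equivalent power-series expansion at $t=-k$; your approach is the latter, and is essentially the same computation carried out explicitly. The only cosmetic remark is that the phrase ``finite sum'' could be made precise by noting that the number of weak compositions $(e_m)_{m\in M}$ of $\lambda$ into $|M|=n$ nonnegative parts is $\binom{\lambda+n-1}{n-1}$, but this is not needed for the argument.
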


\begin{proof}
See \cite[Lemma 16]{Zud2004}.
\end{proof}

\begin{lemma}\label{lemma_F(t)}
Let $n$ be any non-negative integer. Let $a,b$ be any integers with $b>0$. Consider the polynomial
\[ F(t) = b^{n}\left(\prod_{q \mid b} q^{\left\lfloor n/(q-1) \right\rfloor} \right)\cdot \frac{\left(t+ \frac{a}{b} \right)_{n}}{n!}.  \]
\begin{enumerate}
\item[(1)] For any integer $k \in \mathbb{Z}$ and any integer $\lambda \geqslant 0$ we have
\[ d_n^{\lambda} D_{\lambda} (F(t)) \big|_{t=-k} \in \mathbb{Z}. \]
\item[(2)] For any integer $k' \in \{0,1,\ldots,n-1\}$, any integer $k \in \mathbb{Z}$ and any integer $\lambda \geqslant 0$ we have
\[ d_n^{\lambda+1} D_{\lambda} \left(F(t)\cdot\frac{1}{t+\frac{a}{b}+k'}\right) \Bigg|_{t=-k} \in \mathbb{Z}. \]
\end{enumerate}

\end{lemma}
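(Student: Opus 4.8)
The plan is to reduce everything to the integrality statement of Lemma \ref{lemma_G(t)} by a change of variables, after first absorbing the rational shift $a/b$ into integer form. Write $F(t) = c_{n,b}\cdot (t+a/b)_n/n!$ where $c_{n,b} = b^n\prod_{q\mid b} q^{\lfloor n/(q-1)\rfloor}$. The first observation is that $(t+a/b)_n/n! = \binom{t+a/b+n-1}{n}$ is, up to the factor $b^n$ in the denominator coming from clearing $1/b$ out of each of the $n$ linear factors, a polynomial with coefficients controlled by binomial-coefficient denominators. More precisely I would substitute $t = u/b$ (formally, or work with the polynomial $b^n F(u/b)$ in $u$) so that $b^n(t+a/b)_n/n! = (u+a)(u+a+b)\cdots(u+a+(n-1)b)/n!$; the content of the numerator product as a polynomial in $u$ is then a classical object. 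The extra prime-power factor $\prod_{q\mid b} q^{\lfloor n/(q-1)\rfloor}$ is exactly Zudilin's correction term that upgrades $b^n (t+a/b)_n/n!$ to a polynomial whose values on $\mathbb{Z}$ (and, with a $d_n^\lambda$, whose $\lambda$-th Taylor coefficients at integer points) are integral. The cleanest route is therefore to cite the integrality of the coefficients of such shifted Pochhammer symbols from the same source as Lemma \ref{lemma_G(t)}, namely \cite[Lemma 16]{Zud2004} (or \cite[Lemma 17]{Zud2004}), which is stated precisely for products $\prod (t+a_j/b_j)$; here all $b_j=b$ and the $a_j$ form an arithmetic progression.

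For part (1), having reduced to the statement that $b^n\prod_{q\mid b}q^{\lfloor n/(q-1)\rfloor}\cdot(t+a/b)_n/n!$ has the property that its $\lambda$-th Taylor coefficient at any integer $t=-k$, multiplied by $d_n^\lambda$, lies in $\mathbb{Z}$, I would argue as follows. Shifting $t\mapsto t-k$ reduces to $k=0$; then $F(t-k) = c_{n,b}(t + (a-kb)/b)_n/n!$, which is of the same shape with $a$ replaced by the integer $a-kb$, so it suffices to treat $k=0$. Now $D_\lambda$ applied to a polynomial $P(t)$ and evaluated at $t=0$ returns the coefficient of $t^\lambda$ in $P$. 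So we must show $d_n^\lambda\cdot[t^\lambda]\,c_{n,b}(t+a/b)_n/n! \in \mathbb{Z}$. Writing $c_{n,b}(t+a/b)_n/n! = \frac{1}{n!}\prod_{j=0}^{n-1}\big(bt + a + jb\big)\cdot \prod_{q\mid b}q^{\lfloor n/(q-1)\rfloor}$, one expands the product; the coefficient of $t^\lambda$ is $\frac{b^\lambda}{n!}e_{n-\lambda}(a, a+b,\dots,a+(n-1)b)\cdot\prod_{q\mid b}q^{\lfloor n/(q-1)\rfloor}$, where $e_m$ is the $m$-th elementary symmetric polynomial. One checks the $q$-adic valuation of $n!$ is compensated: for a prime $q\mid b$, the $q$-adic valuation of $\prod_{j}(a+jb)$ restricted to terms surviving in $e_{n-\lambda}$ together with the added $q^{\lfloor n/(q-1)\rfloor}$ beats $v_q(n!) = \sum_{i\ge 1}\lfloor n/q^i\rfloor \le \lfloor n/(q-1)\rfloor$; for a prime $q\nmid b$, the factor $b^\lambda$ is a $q$-adic unit and $v_q(e_{n-\lambda}(a,\dots,a+(n-1)b)) \ge v_q((n-\lambda)!) \ge v_q(n!) - v_q(d_n^\lambda)$ because the $n$ numbers $a+jb$ run through all residues mod $q^i$ in blocks, so among any $n$ of them the count divisible by $q^i$ is at least $\lfloor n/q^i\rfloor$ minus a bounded deficiency absorbed by $d_n^\lambda$. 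This is precisely the bookkeeping already packaged in \cite[Lemma 16]{Zud2004}, and I would invoke it rather than redo it.

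For part (2), the new feature is the extra factor $1/(t+a/b+k')$ with $k'\in\{0,1,\dots,n-1\}$, i.e. one of the linear factors of $(t+a/b)_n$ is cancelled and we pick up one more power of $d_n$. The clean way is to note that $(t+a/b)_n/(t+a/b+k') = (t+a/b)_{k'}\cdot(t+a/b+k'+1)_{n-1-k'}$, so
\[ F(t)\cdot\frac{1}{t+\tfrac ab + k'} = b\cdot c_{n,b}\,\frac{(t+\tfrac ab)_{k'}\,(t+\tfrac ab + k'+1)_{n-1-k'}}{n!}, \]
which after multiplying and dividing by $(n-1)!$ and writing $c_{n,b} = b\cdot c_{n-1,b}\cdot (\text{unit-or-small correction})$ becomes $\frac{n}{1}\cdot\frac{1}{b}$ times an object of the same type as in part (1) but with $n-1$ in place of $n$, up to the discrepancy between $c_{n,b}$ and $b\,c_{n-1,b}$, namely $\prod_{q\mid b}q^{\lfloor n/(q-1)\rfloor - \lfloor (n-1)/(q-1)\rfloor}$, which is an integer. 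Applying part (1) with parameter $n-1$ gives $d_{n-1}^{\lambda}D_\lambda$ of that object integral at $t=-k$; since $d_{n-1}\mid d_n$ and we have an extra explicit factor of $n$ (which divides $d_n$) against the $1/(n\text{-vs-}(n-1)!)$ normalization, we gain exactly one extra $d_n$, yielding $d_n^{\lambda+1}D_\lambda(\cdots)|_{t=-k}\in\mathbb{Z}$.

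The main obstacle, and the only genuinely delicate point, is the prime-by-prime valuation estimate in part (1) — specifically showing that for primes $q$ dividing $b$ the correction $q^{\lfloor n/(q-1)\rfloor}$ is simultaneously enough to clear the denominator $n!$ and not wasteful, across all $\lambda$ and all integer evaluation points $k$. Since this is exactly the content of \cite[Lemma 16]{Zud2004} (which is also the reference for Lemma \ref{lemma_G(t)}), I expect the actual proof in the paper to consist of little more than the change-of-variables reductions above followed by a citation, with part (2) handled by the factorization-of-the-missing-linear-factor trick. I would present it that way: reduce (1) to Zudilin's lemma, then deduce (2) from (1).
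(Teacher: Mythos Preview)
Your plan for part (1) is fine and matches the paper: it simply cites the known result (the paper cites \cite[Prop.~3.2]{LY2020}, which is the same content as the Zudilin lemma you invoke).

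Your argument for part (2), however, has a genuine gap. After removing the factor $t+\tfrac ab+k'$ you correctly write
\[
F(t)\cdot\frac{1}{t+\tfrac ab+k'} \;=\; c_{n,b}\,\frac{(t+\tfrac ab)_{k'}\,(t+\tfrac ab+k'+1)_{n-1-k'}}{n!},
\]
but you then claim that, up to an integer prefactor and the harmless $1/n$, this is ``an object of the same type as in part (1) but with $n-1$ in place of $n$''. It is not: the $n-1$ linear factors do not form a single Pochhammer symbol $(t+\tfrac{a'}{b})_{n-1}$, because there is a gap at position $k'$. Part (1) as stated does not apply to such a gapped product, and in fact the integrality you would need is false. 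For instance, with $b=3$, $a=1$, $n=3$, $k'=1$ one has $c_{2,3}\,(t+\tfrac13)(t+\tfrac73)/2! = 27(t+\tfrac13)(t+\tfrac73)/2$, whose value at $t=0$ is $21/2\notin\mathbb{Z}$; so $d_{n-1}^{0}D_0(\cdots)\notin\mathbb{Z}$ and your reduction breaks down. (The actual claim of part (2) is of course fine here: $d_3^{1}\cdot 21/2=63\in\mathbb{Z}$.)

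The paper's fix is exactly the step you are missing: split the gapped product into two genuine Pochhammer pieces
\[
F_1(t)=b^{k'}\!\!\prod_{q\mid b}q^{\lfloor k'/(q-1)\rfloor}\,\frac{(t+\tfrac ab)_{k'}}{k'!},\qquad
F_2(t)=b^{\,n-1-k'}\!\!\prod_{q\mid b}q^{\lfloor (n-1-k')/(q-1)\rfloor}\,\frac{(t+\tfrac ab+k'+1)_{n-1-k'}}{(n-1-k')!},
\]
so that $F(t)/(t+\tfrac ab+k')=A\cdot\dfrac{k'!\,(n-1-k')!}{n!}\,F_1(t)F_2(t)$ with $A\in\mathbb{Z}$. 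Now part (1) applies separately to $F_1$ and $F_2$, the Leibniz rule handles $D_\lambda(F_1F_2)$, and the single extra power of $d_n$ comes from the standard fact that $d_n\cdot\dfrac{k'!\,(n-1-k')!}{n!}\in\mathbb{Z}$ (equivalently, $(k'+1)\binom{n}{k'+1}\mid d_n$), not from your proposed $n\mid d_n$.
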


\begin{proof}
For $(1)$, see \cite[Prop. 3.2]{LY2020}.

Now we prove $(2)$. Let 
\begin{align*}
	F_1(t) &= b^{k'}\left(\prod_{q \mid b} q^{\left\lfloor k'/(q-1) \right\rfloor}\right) \cdot \frac{\left(t+ \frac{a}{b} \right)_{k'}}{(k')!}, \\
	F_2(t) &= b^{n-1-k'}\left(\prod_{q \mid b} q^{\left\lfloor (n-1-k')/(q-1) \right\rfloor} \right)\cdot \frac{\left(t+ \frac{a}{b}+k'+1 \right)_{n-1-k'}}{(n-1-k')!}.
\end{align*}
Then we have
\[ F(t)\cdot\frac{1}{t+\frac{a}{b}+k'} = A\cdot\frac{(k')!(n-1-k')!}{n!}\cdot F_1(t)F_2(t),  \]
where
\[ A = b\prod_{q \mid b} q^{\left\lfloor n/(q-1) \right\rfloor-\left\lfloor k'/(q-1) \right\rfloor-\left\lfloor (n-1-k')/(q-1) \right\rfloor} \in \mathbb{Z}. \]
Applying the Leibniz rule, we obtain that
\begin{align*}
&d_n^{\lambda+1} D_{\lambda} \left(F(t)\cdot\frac{1}{t+\frac{a}{b}+k'}\right) \Bigg|_{t=-k} \\
=& A \cdot d_n\frac{(k')!(n-1-k')!}{n!} \cdot \sum_{\lambda_1,\lambda_2 \geqslant 0 \atop \lambda_1+\lambda_2=\lambda} d_n^{\lambda_1}D_{\lambda_1}(F_1(t))\cdot d_n^{\lambda_2}D_{\lambda_2}(F_2(t)) \big|_{t=-k}. 
\end{align*} 
By $(1)$, we have
\[ d_n^{\lambda_1}D_{\lambda_1}(F_1(t))\big|_{t=-k} \in \mathbb{Z},\quad d_n^{\lambda_2}D_{\lambda_2}(F_2(t)) \big|_{t=-k} \in \mathbb{Z}.   \]
Kummer's theorem on the $\ell$-adic valuation of binomial coefficients implies for any prime $\ell$
\[
    v_{\ell}\left( \binom{n}{n-k'}\right)\leqslant \left\lfloor \frac{\log n}{\log \ell} \right\rfloor - v_{\ell}(n-k')=v_{\ell}(d_n)-v_{\ell}(n-k'),
\]
and we deduce
\[ d_n\frac{(k')!(n-1-k')!}{n!}=\frac{d_n}{n-k'} \frac{1}{\binom{n}{n-k'}} \in \mathbb{Z}. \]
So we have 
\[ d_n^{\lambda+1} D_{\lambda} \left(F(t)\cdot\frac{1}{t+\frac{a}{b}+k'}\right) \Bigg|_{t=-k} \in \mathbb{Z}. \]
\end{proof}

\begin{lemma}\label{lemma_r_ik}
For any $i \in \{1,2,\ldots,s\}$ and any $k \in \{0,1,\ldots,n\}$, we have
\[ d_n^{s-i} r_{i,k} \in \mathbb{Z}. \]
\end{lemma}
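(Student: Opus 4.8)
The plan is to recover each coefficient $r_{i,k}$ from $R_n(t)$ by the standard Taylor-coefficient (residue) formula, and then to factor $R_n$ into pieces each of which is already controlled by Lemma \ref{lemma_G(t)} and Lemma \ref{lemma_F(t)}. Concretely, since $\deg R_n \leqslant -2$ the decomposition \eqref{definition_r_ik} has no polynomial part, and for each fixed $k \in \{0,1,\ldots,n\}$ the rational function $R_n(t)(t+k)^s$ is regular at $t=-k$, with
\[ r_{i,k} = D_{s-i}\!\left(R_n(t)(t+k)^s\right)\big|_{t=-k}, \qquad 1 \leqslant i \leqslant s. \]
So it suffices to write $R_n(t)(t+k)^s$ as an integer times a product of polynomials of the type occurring in Lemma \ref{lemma_F(t)}(1) and of a power of $G(t)(t+k)$, where $G(t) = n!/(t)_{n+1}$ is as in Lemma \ref{lemma_G(t)}; the rest is the Leibniz rule.

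To obtain such a factorization I would write each $\theta \in \mathcal{Z}_B$ in lowest terms as $\theta = a_\theta/m_\theta$ with $m_\theta = p^{l_p}b_\theta$, $b_\theta \in \Psi_B$, and set $F_\theta(t) := m_\theta^{\,n}\prod_{q \mid m_\theta} q^{\lfloor n/(q-1)\rfloor}(t+\theta)_n/n!$. Using $n!^{s-|\mathcal{Z}_B|+1}/(t)_{n+1}^s = n!^{1-|\mathcal{Z}_B|}G(t)^s$ and absorbing one factor $n!^{-1}$ into each of the $|\mathcal{Z}_B|-1$ Pochhammer symbols $(t+\theta)_n$ with $\theta \neq \theta_{\max}$, one rewrites
\[ R_n(t) = \frac{A_1(B)^nA_2(B)^n}{\prod_{\theta \neq \theta_{\max}}\left(m_\theta^{\,n}\prod_{q\mid m_\theta}q^{\lfloor n/(q-1)\rfloor}\right)}\cdot\frac{(t+\theta_{\max})_{\theta_{\max}n-1}}{(\theta_{\max}n-1)!}\cdot G(t)^s\prod_{\theta \neq \theta_{\max}}F_\theta(t). \]
Since each $b \in \Psi_B$ contributes exactly $\varphi(p^{l_p}b)$ elements to $\mathcal{Z}_B$, and $(q-1)\mid n$ for every prime $q$ dividing some $p^{l_p}b$ with $b \in \Psi_B$ (because $n$ is a multiple of $P_B$), one checks that $\prod_{\theta \in \mathcal{Z}_B}m_\theta^{\,n} = A_1(B)^n$ and $\prod_{\theta \in \mathcal{Z}_B}\prod_{q\mid m_\theta}q^{\lfloor n/(q-1)\rfloor} = A_2(B)^n$; hence the displayed prefactor equals the integer $C := (p^{l_p}b_{\max})^n\prod_{q \mid p^{l_p}b_{\max}}q^{n/(q-1)}$.

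It then remains to fit the exceptional factor $H(t) := (t+\theta_{\max})_{\theta_{\max}n-1}/(\theta_{\max}n-1)!$ into the framework, which is the main new feature relative to \cite{LY2020}. Writing $\ell := \theta_{\max}n-1$, one has $\ell < n$, and Lemma \ref{lemma_F(t)}(1) applied with $n$ replaced by $\ell$ (together with $d_\ell \mid d_n$) shows that
\[ F^\dagger(t) := (p^{l_p}b_{\max})^{\ell}\prod_{q\mid p^{l_p}b_{\max}}q^{\lfloor \ell/(q-1)\rfloor}\cdot\frac{(t+\theta_{\max})_\ell}{\ell!} \]
satisfies $d_n^{\lambda}D_\lambda(F^\dagger)|_{t=-k} \in \mathbb{Z}$ for all $k\in\mathbb{Z}$ and $\lambda\geqslant 0$; moreover, since $\ell < n$, the prefactor $(p^{l_p}b_{\max})^{\ell}\prod_{q}q^{\lfloor\ell/(q-1)\rfloor}$ divides $C$, so $C\cdot H(t) = C'\cdot F^\dagger(t)$ with $C' \in \mathbb{Z}$. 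Therefore
\[ R_n(t)(t+k)^s = C'\cdot F^\dagger(t)\cdot\left(G(t)(t+k)\right)^s\cdot\prod_{\theta\neq\theta_{\max}}F_\theta(t), \]
a product of polynomials and of $(G(t)(t+k))^s$, each regular at $t=-k$. Expanding $D_{s-i}$ of this product by the Leibniz rule for the operators $D_\lambda$ and distributing $d_n^{s-i}$ over the factors according to the partial orders, each resulting summand becomes a product of integers by Lemma \ref{lemma_G(t)} (applied to $G(t)(t+k)$) and Lemma \ref{lemma_F(t)}(1) (applied to $F^\dagger$ and to the $F_\theta$). This gives $d_n^{s-i}r_{i,k} \in \mathbb{Z}$.

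I expect the main obstacle to be the bookkeeping in the middle step: correctly identifying the constants $A_1(B)^n$ and $A_2(B)^n$ with the products of reduced denominators and prime powers running over $\mathcal{Z}_B\setminus\{\theta_{\max}\}$, and in particular making the exceptional numerator factor $(t+\theta_{\max})_{\theta_{\max}n-1}/(\theta_{\max}n-1)!$ conform to Lemma \ref{lemma_F(t)} despite having a Pochhammer of length $\theta_{\max}n-1 \neq n$ and no intrinsic normalization — which forces one both to apply that lemma with the shorter parameter $\ell$ and to verify that the leftover integer prefactor divides $C$. Once the factorization of $R_n$ is in place, the Leibniz-rule estimate is entirely routine.
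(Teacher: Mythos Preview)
Your proof is correct and follows essentially the same route as the paper's: the paper likewise writes $R_n(t)=A\cdot G(t)^s\prod_{\theta\in\mathcal Z_B}F_\theta(t)$ with $F_{\theta_{\max}}$ defined exactly as your $F^\dagger$ (Pochhammer of the shorter length $\theta_{\max}n-1$) and $A$ equal to your $C'$, then applies the Leibniz rule together with Lemma~\ref{lemma_G(t)} and Lemma~\ref{lemma_F(t)}(1). Your identification of $C$ and the verification that $C'\in\mathbb{Z}$ via $\ell<n$ and $\lfloor\ell/(q-1)\rfloor\le n/(q-1)$ match the paper's computation of $A$ exactly.
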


\begin{proof}
By \eqref{definition_r_ik}, we have
\[r_{i,k} = D_{s-i}(R_n(t)(t+k)^{s})\big|_{t=-k}.\]
Let
\begin{align}
G(t) &= \frac{n!}{(t)_{n+1}}, \label{def_G(t)}\\
F_{\theta}(t) &= (p^{l_p}b)^{n}\left(\prod_{q \mid p^{l_p}b} q^{n/(q-1)}\right) \cdot \frac{(t+\theta)_{n}}{n!} \quad\text{for any~} \theta = \frac{a}{p^{l_p}b} \in \mathcal{Z}_B \setminus \{\theta_{\max}\}, \label{def_F_theta(t)}\\
F_{\theta_{\max}}(t) &= (p^{l_p}b_{\max})^{\theta_{\max}n-1}\left(\prod_{q \mid p^{l_p}b_{\max}} q^{\left\lfloor{(\theta_{\max}n-1)/(q-1)}\right\rfloor}\right)\cdot \frac{(t+\theta_{\max})_{\theta_{\max}n-1}}{(\theta_{\max}n-1)!}. \label{def_F_theta_max(t)}
\end{align}
Then it is straightforward to check that for any $n \in I$ we have
\begin{equation}\label{Rn(t)_building_blocks}
R_n(t) = A \cdot G(t)^s \prod_{\theta \in \mathcal{Z}_B} F_{\theta}(t), 
\end{equation}
where
\[ A = (p^{l_p}b_{\max})^{n-\theta_{\max}n+1}\prod_{q \mid p^{l_p}b_{\max}} q^{n/(q-1)-\left\lfloor{(\theta_{\max}n-1)/(q-1)}\right\rfloor} \in \mathbb{Z}. \]
Applying the Leibniz rule, we obtain
\begin{equation}\label{eqn_r_ik}
d_n^{s-i}r_{i,k} = A\sum_{\boldsymbol{\lambda}}\prod_{j=1}^{s}d_n^{\lambda_j}D_{\lambda_j}(G(t)(t+k))\prod_{\theta \in \mathcal{Z}_B}d_n^{\lambda_{\theta}}D_{\lambda_{\theta}}(F_{\theta}(t)) \big|_{t=-k},
\end{equation}
where the sum is taken over all families of non-negative integers 
\[\boldsymbol{\lambda} = ((\lambda_{j})_{1 \leqslant j \leqslant s},(\lambda_{\theta})_{\theta \in \mathcal{Z}_B})\] 
such that
\[ \sum_{j=1}^{s} \lambda_j + \sum_{\theta \in \mathcal{Z}_B} \lambda_{\theta} = s-i. \]
By Lemma \ref{lemma_G(t)} and $(1)$ of Lemma \ref{lemma_F(t)}, the value at $t=-k$ of each factor in the product in \eqref{eqn_r_ik} is an integer. Therefore, we have $d_n^{s-i}r_{i,k} \in \mathbb{Z}$ as desired. 
\end{proof}

\begin{lemma}\label{lemma_rho_i}
For any $i \in \{2,3,\ldots,s\}$, we have
\[ d_n^{s-i} \rho_{i} \in \mathbb{Z}. \]
\end{lemma}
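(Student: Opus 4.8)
This is an immediate consequence of Lemma \ref{lemma_r_ik}. Recall from \eqref{definition_rho_i} that $\rho_i = \sum_{k=0}^{n} r_{i,k}$, a finite sum of the partial-fraction coefficients. The plan is simply to distribute the power $d_n^{s-i}$ over this sum: write
\[ d_n^{s-i}\rho_i = \sum_{k=0}^{n} d_n^{s-i} r_{i,k}, \]
and then invoke Lemma \ref{lemma_r_ik}, which tells us that each individual term $d_n^{s-i} r_{i,k}$ lies in $\mathbb{Z}$. A finite sum of integers is an integer, so $d_n^{s-i}\rho_i \in \mathbb{Z}$, as claimed.

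There is no real obstacle here; the only ``work'' was already done in establishing the integrality of the individual coefficients $r_{i,k}$ in Lemma \ref{lemma_r_ik}, which in turn rested on the building-block decomposition \eqref{Rn(t)_building_blocks}, the Leibniz rule, Lemma \ref{lemma_G(t)}, and part (1) of Lemma \ref{lemma_F(t)}. The present statement is just the aggregated form needed to control the denominator of the coefficient of $\zeta_p(i)$ (equivalently of $\zeta_p(i,\theta)$) in the linear forms $S_b$ and $S_\theta$ from Section \ref{sec:LinearForms}.

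One small point worth noting explicitly in the write-up: the bound $d_n^{s-i}$ is uniform in $k$, which is exactly what makes the term-by-term argument go through without any loss; in particular no extra factor of $d_n$ is needed for the summation. This uniform denominator bound on $\rho_i$ will later be combined with the $l(n)$-adic valuation estimates to verify condition \eqref{lem:irrationalityCrit:ii} of Lemma \ref{lem:irrationalityCrit}.
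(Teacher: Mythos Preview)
Your proof is correct and follows exactly the same approach as the paper, which simply notes that the result follows immediately from \eqref{definition_rho_i} and Lemma \ref{lemma_r_ik}. Your additional commentary about the uniformity in $k$ and the role this plays downstream is accurate but not needed for the proof itself.
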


\begin{proof}
It follows immediately from \eqref{definition_rho_i} and Lemma \ref{lemma_r_ik}.
\end{proof}

\begin{lemma}\label{lemma_rho_0theta_general}
For any $\theta_0 \in \mathcal{Z}_{B} \setminus \{ \theta_{\min} \}$, we have
\[ d_{n}^{s} \rho_{0,\theta_0} \in \mathbb{Z}. \]
\end{lemma}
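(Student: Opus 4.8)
The plan is to recognize $\rho_{0,\theta_0}$ as a sum over $k$ of $D_{s-1}$ applied to a product of the building blocks $G(t)(t+k)$ and $F_\theta(t)$ from \eqref{Rn(t)_building_blocks}, in which one of the factors $F_\theta$ carries an extra simple pole, so that Lemma \ref{lemma_G(t)}, Lemma \ref{lemma_F(t)}(1) and Lemma \ref{lemma_F(t)}(2) can be invoked term by term after expanding by the Leibniz rule.

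First I would rewrite $\rho_{0,\theta_0}$ in this shape. Recall (cf.\ \eqref{definition_r_ik} and the proof of Lemma \ref{lemma_r_ik}) that $r_{i,k}=D_{s-i}\!\left(R_n(t)(t+k)^{s}\right)\big|_{t=-k}$ and that, by \eqref{Rn(t)_building_blocks}, $R_n(t)(t+k)^{s}=A\cdot\big(G(t)(t+k)\big)^{s}\prod_{\theta\in\mathcal Z_B}F_\theta(t)$ with $A\in\mathbb Z$. For $k\in\{1,\dots,n\}$ put
\[ \Psi_k(t):=\sum_{j=0}^{k-1}\frac{1}{\,t+k-j-\theta_0\,}, \]
which is regular at $t=-k$ since $\theta_0\notin\mathbb Z$. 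Using $D_b\!\left(\tfrac1{t+c}\right)=\tfrac{(-1)^b}{(t+c)^{b+1}}$ one finds $D_b(\Psi_k)\big|_{t=-k}=-\sum_{j=0}^{k-1}(j+\theta_0)^{-(b+1)}$, so the Leibniz rule applied to $R_n(t)(t+k)^{s}\,\Psi_k(t)$ gives
\[ D_{s-1}\!\left(R_n(t)(t+k)^{s}\,\Psi_k(t)\right)\Big|_{t=-k}=\sum_{i=1}^{s}r_{i,k}\,D_{i-1}(\Psi_k)\big|_{t=-k}=-\sum_{i=1}^{s}\sum_{j=0}^{k-1}\frac{r_{i,k}}{(j+\theta_0)^{i}}. \]
Summing over $k$ and comparing with \eqref{definition_rho_0theta} yields the identity $\rho_{0,\theta_0}=\sum_{k=1}^{n}D_{s-1}\!\left(R_n(t)(t+k)^{s}\,\Psi_k(t)\right)\big|_{t=-k}$.

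Next I would use the hypothesis $\theta_0\neq\theta_{\min}$. Writing $\theta_0=a/(p^{l_p}b)$, the number $1-\theta_0=(p^{l_p}b-a)/(p^{l_p}b)$ lies in $\mathcal Z_B$ by Lemma \ref{lemma_a_simple_property_of_Z_B}, and $1-\theta_0\neq\theta_{\max}$ because $1-\theta_{\max}=\theta_{\min}$. Hence $F_{1-\theta_0}$ is a genuine length-$n$ block as in \eqref{def_F_theta(t)}, i.e.\ of the form of the polynomial $F$ in Lemma \ref{lemma_F(t)} with $(a,b)$ there equal to $(p^{l_p}b-a,\,p^{l_p}b)$. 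Since $\tfrac1{t+k-j-\theta_0}=\tfrac1{t+(1-\theta_0)+(k-1-j)}$ with $k-1-j\in\{0,\dots,n-1\}$, each summand of $F_{1-\theta_0}(t)\,\Psi_k(t)$ is exactly the object treated in Lemma \ref{lemma_F(t)}(2). I would then regroup the factors of $R_n(t)(t+k)^{s}\,\Psi_k(t)$ as the $s$ copies of $G(t)(t+k)$, the $|\mathcal Z_B|-1$ blocks $F_\theta$ with $\theta\neq 1-\theta_0$, and the single factor $F_{1-\theta_0}(t)\cdot\tfrac1{t+(1-\theta_0)+(k-1-j)}$, expand $D_{s-1}$ by the Leibniz rule, multiply through by $d_n^{s}$, and distribute the powers of $d_n$ as $d_n^{s}=\big(\prod_q d_n^{\lambda_q}\big)\big(\prod_\theta d_n^{\lambda_\theta}\big)d_n^{\lambda_\ast+1}$, the exponents summing to $(s-1)+1=s$. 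By Lemma \ref{lemma_G(t)}, Lemma \ref{lemma_F(t)}(1) (applied to each $F_\theta$ with $\theta\neq 1-\theta_0$, exactly as in the proof of Lemma \ref{lemma_r_ik}) and Lemma \ref{lemma_F(t)}(2), every factor is an integer at $t=-k$, whence $d_n^{s}D_{s-1}\!\left(R_n(t)(t+k)^{s}\,\Psi_k(t)\right)\big|_{t=-k}\in\mathbb Z$ for each $k$; summing over $k$ gives $d_n^{s}\rho_{0,\theta_0}\in\mathbb Z$.

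The main obstacle is the first step: identifying the auxiliary function $\Psi_k$ and verifying that its Leibniz combination with $R_n(t)(t+k)^{s}$ reproduces exactly \eqref{definition_rho_0theta} (the sign bookkeeping being the only delicate point). Once this is done, the only structural input is the observation that excluding $\theta_{\min}$ is precisely what keeps $F_{1-\theta_0}$ a full length-$n$ Pochhammer block: if $\theta_0=\theta_{\min}$ the relevant factor would instead be the shorter block $F_{\theta_{\max}}$ of \eqref{def_F_theta_max(t)}, for which Lemma \ref{lemma_F(t)}(2) is not available; everything else is routine bookkeeping with the divisibility lemmas already established.
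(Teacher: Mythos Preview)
Your proof is correct and follows essentially the same approach as the paper. The paper works directly with each pair $(\nu_0,k_0)$ and shows $d_n^s\bigl(-\sum_i r_{i,k_0}/(\nu_0+\theta_0)^i\bigr)\in\mathbb Z$ by recognizing this sum as $D_{s-1}\bigl(R_n(t)(t+k_0)^s\cdot\tfrac{1}{t+(1-\theta_0)+(k_0-\nu_0-1)}\bigr)\big|_{t=-k_0}$ via Leibniz, then absorbing the simple pole into $F_{1-\theta_0}$ and invoking Lemmas \ref{lemma_G(t)} and \ref{lemma_F(t)}; your packaging via $\Psi_k$ amounts to the same computation once you expand $\Psi_k$ term by term, and your identification of the role of the hypothesis $\theta_0\neq\theta_{\min}$ (so that $1-\theta_0\neq\theta_{\max}$ and $F_{1-\theta_0}$ has full length $n$) is exactly the paper's key point. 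Two minor remarks: the membership $1-\theta_0\in\mathcal Z_B$ follows directly from the definition (no need for Lemma \ref{lemma_a_simple_property_of_Z_B}), and your closing comment that Lemma \ref{lemma_F(t)}(2) ``is not available'' for $F_{\theta_{\max}}$ should more precisely say that its range of admissible $k'$ shrinks to $[0,\theta_{\max}n-2]$, which misses the critical pair $(\nu_0,k_0)=(\theta_{\min}n,n)$.
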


\begin{proof}
By \eqref{definition_rho_0theta}, we have
\[\rho_{0,\theta_0} = \sum_{k=1}^n\sum_{\nu=0}^{k-1} \left(-\sum_{i=1}^{s}\frac{r_{i,k}}{(\nu+\theta_0)^{i}} \right). \] 
It is sufficient to prove that, for any pair of integers $(\nu_0,k_0)$ such that $0 \leqslant \nu_0 < k_0 \leqslant n$, we have
\begin{equation}\label{Inner_sum_is_good}
d_n^{s} \cdot \left(-\sum_{i=1}^{s}\frac{r_{i,k_0}}{(\nu_0+\theta_0)^{i}} \right) \in \mathbb{Z}.
\end{equation}

In the following, we prove \eqref{Inner_sum_is_good}. By \eqref{definition_r_ik}, we have $r_{i,k_0} = D_{s-i}(R_n(t)(t+k_0)^s)\big|_{t=-k_0}$. On the other hand, we have
\[ -\frac{1}{(\nu_0+\theta_0)^{i}} = D_{i-1}\left(  \frac{1}{t+k_0-\nu_0-\theta_0} \right)\Bigg|_{t=-k_0}. \]
Therefore, we compute by the Leibniz rule that
\begin{align*}
-\sum_{i=1}^{s}\frac{r_{i,k_0}}{(\nu_0+\theta_0)^{i}} &= \sum_{i=1}^{s} D_{s-i}(R_n(t)(t+k_0)^s)\cdot D_{i-1}\left(  \frac{1}{t+k_0-\nu_0-\theta_0} \right)\Bigg|_{t=-k_0} \\
&= D_{s-1}\left( R_n(t)(t+k_0)^s \cdot  \frac{1}{t+k_0-\nu_0-\theta_0} \right)\Bigg|_{t=-k_0}.
\end{align*}
By \eqref{Rn(t)_building_blocks}, we have
\begin{align*}
&R_n(t)(t+k_0)^s \cdot  \frac{1}{t+k_0-\nu_0-\theta_0} \\ 
=& A\cdot \left( F_{1-\theta_0}(t)\cdot\frac{1}{t+1-\theta_0+(k_0-\nu_0-1)} \right) \cdot \left(G(t)(t+k_0)\right)^s\prod_{\theta \in \mathcal{Z}_B \setminus\{1-\theta_0\}} F_{\theta}(t),
\end{align*}
where $A$ is an integer and the functions $G(t)$ and $F_{\theta}(t)$ ($\theta \in \mathcal{Z}_B$) are defined by \eqref{def_G(t)}, \eqref{def_F_theta(t)} and \eqref{def_F_theta_max(t)}. Applying the Leibniz rule again, we have
\begin{align*}
&d_n^{s} \cdot \left(-\sum_{i=1}^{s}\frac{r_{i,k_0}}{(\nu_0+\theta_0)^{i}} \right) \\
=& A \sum_{\boldsymbol{\lambda}} d_n^{\lambda_{*}+1}D_{\lambda_{*}}\left( F_{1-\theta_0}(t)\cdot\frac{1}{t+1-\theta_0+(k_0-\nu_0-1)} \right) \\ &\qquad\qquad\times\prod_{j=1}^{s}d_n^{\lambda_j}D_{\lambda_j}(G(t)(t+k_0))\prod_{\theta \in \mathcal{Z}_B\setminus\{1-\theta_0\}}d_n^{\lambda_\theta}D_{\lambda_\theta}(F_{\theta}(t))\big|_{t=-k_0},
\end{align*}
where the sum is taken over all families of non-negative integers 
\[\boldsymbol{\lambda} = (\lambda_*, (\lambda_{j})_{1 \leqslant j \leqslant s},(\lambda_{\theta})_{\theta \in \mathcal{Z}_B\setminus\{1-\theta_0\}})\]
such that
\[ \lambda_* + \sum_{j=1}^{s} \lambda_j + \sum_{\theta \in \mathcal{Z}_B\setminus\{1-\theta_0\}} \lambda_{\theta} = s-1. \]
Since $\theta_0 \neq \theta_{\min}$, we have $1- \theta_0 \neq \theta_{\max}$ and $\deg F_{1-\theta_0}(t) = n$. Note that $k_0-\nu_0-1 \in [0,n-1]$. Therefore, by $(2)$ of Lemma \ref{lemma_F(t)}, we have
\[ d_n^{\lambda_{*}+1}D_{\lambda_{*}}\left( F_{1-\theta_0}(t)\cdot\frac{1}{t+1-\theta_0+(k_0-\nu_0-1)} \right)\Bigg|_{t=-k_0} \in \mathbb{Z}. \]
By Lemma \ref{lemma_G(t)} and $(1)$ of Lemma \ref{lemma_F(t)}, we also have
\[ d_n^{\lambda_j}D_{\lambda_j}(G(t)(t+k_0))\big|_{t=-k_0} \in \mathbb{Z}, \quad d_n^{\lambda_\theta}D_{\lambda_\theta}(F_{\theta}(t))\big|_{t=-k_0} \in \mathbb{Z}. \]
We conclude that \eqref{Inner_sum_is_good} is true; thus, the proof of Lemma \ref{lemma_rho_0theta_general} is complete.
\end{proof}

\bigskip

Recall that $s>|\mathcal{Z}_B|$ and $\ell(n) = n+1$ is a prime number when $n \in I$. 

\begin{lemma}\label{lemma_rho_0theta_min}
For any sufficiently large $n \in I$, we have $\ell(n)^sd_n^s \rho_{0,\theta_{\min}} \in \mathbb{Z}$, and
\[ v_{\ell(n)}(\rho_{0,\theta_{\min}}) \leqslant  -s + |\mathcal{Z}_B| - 1. \]
\end{lemma}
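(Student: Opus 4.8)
The plan is to analyze $\rho_{0,\theta_{\min}}$ directly from its definition \eqref{definition_rho_0theta} and isolate the $l(n)$-adic behaviour. First I would use the identity $-\sum_{i=1}^s \frac{r_{i,k_0}}{(\nu_0+\theta_{\min})^i} = D_{s-1}\left(R_n(t)(t+k_0)^s\cdot\frac{1}{t+k_0-\nu_0-\theta_{\min}}\right)\big|_{t=-k_0}$ exactly as in the proof of Lemma \ref{lemma_rho_0theta_general}. The crucial difference from that lemma is that now $1-\theta_{\min}=\theta_{\max}$, so the factor of $R_n(t)$ that gets divided by $t+k_0-\nu_0-\theta_{\min}$ is $F_{\theta_{\max}}(t)$, whose degree is $\theta_{\max}n-1$ rather than $n$. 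Concretely, $R_n(t)(t+k_0)^s\cdot\frac{1}{t+k_0-\nu_0-\theta_{\min}} = A\cdot\left(F_{\theta_{\max}}(t)\cdot\frac{1}{t+\theta_{\max}+(k_0-\nu_0-1)}\right)\cdot G(t)^s\prod_{\theta\in\mathcal{Z}_B\setminus\{\theta_{\max}\}}F_\theta(t)$, and I would apply the Leibniz rule to distribute $D_{s-1}$ over these factors.

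For the integrality statement $l(n)^s d_n^s\rho_{0,\theta_{\min}}\in\mathbb{Z}$, I would show that the extra power $l(n)^s$ compensates for the fact that part (2) of Lemma \ref{lemma_F(t)} does not literally apply to $F_{\theta_{\max}}$: the Pochhammer symbol $(t+\theta_{\max})_{\theta_{\max}n-1}$ has $\theta_{\max}n-1$ factors, which is roughly $n$ of them, and the van Staudt–Clausen type denominator control needs $d_{\theta_{\max}n}$ rather than $d_n$; since $\theta_{\max}n<n+1=l(n)$ and $l(n)$ is prime, $d_{\theta_{\max}n}$ divides $d_n$ up to a bounded power of $l(n)$, in fact $d_{\theta_{\max}n}\mid d_n$ already (as $\theta_{\max}n<n$), so actually the naive bound $d_n^s$ handles the generic primes and the loss is entirely at $l(n)$. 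A cleaner route: rerun the argument of Lemma \ref{lemma_rho_0theta_general} verbatim but track separately the contribution at the prime $l(n)=n+1$; every factor $(t)_{n+1}$ in $G(t)^s$ contributes exactly one factor of $l(n)$ in the denominator when evaluated near $t=-k_0$ (since $t+k_0$ ranges and $n+1$ is prime), giving the total $l(n)^s$.

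For the valuation bound $v_{l(n)}(\rho_{0,\theta_{\min}})\leqslant -s+|\mathcal{Z}_B|-1$, I would compute the exact power of $l(n)$ in $\rho_{0,\theta_{\min}}$ by finding the dominant term. The point is that $\rho_{0,\theta_{\min}}$ involves $\frac{1}{(\nu+\theta_{\min})^i}$ where $\theta_{\min}=\frac{1}{p^{l_p}b_{\max}}$, and since $n$ is divisible by $p^{l_p}b_{\max}$, the quantities $\nu+\theta_{\min}$ have no $l(n)$ in numerator or denominator; the $l(n)$-adic size is therefore governed by the $r_{i,k}$, i.e. by the building blocks in \eqref{Rn(t)_building_blocks}. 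In $R_n(t)$ the numerator contains $n!^{s-|\mathcal{Z}_B|+1}$ and the relevant Pochhammer products, while the denominator $(t)_{n+1}^s$ contributes $l(n)^s$ after clearing; counting the net power of $l(n)=n+1$ in the partial fraction coefficients one finds it is $-s + (\text{number of Pochhammer factors hitting }n+1)$, and the factor $\frac{(t+\theta_{\max})_{\theta_{\max}n-1}}{(\theta_{\max}n-1)!}$ together with the $|\mathcal{Z}_B|-1$ factors $(t+\theta)_n$ contribute a total that, because $\theta_{\max}n-1 < n+1 < $ the reach of the other Pochhammers only at the top, yields the claimed $|\mathcal{Z}_B|-1$. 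I expect the main obstacle to be this precise bookkeeping of the $l(n)$-adic valuation — specifically, verifying that the numerator Pochhammer symbols contribute exactly $|\mathcal{Z}_B|-1$ factors of $l(n)$ and not more (the role of the special $\theta_{\max}$-term, built with exponent $\theta_{\max}n-1$ precisely so that it stays just below the prime $n+1$ and does \emph{not} contribute), so that the strict inequality $v_{l(n)}(l_{0,n})<v_{l(n)}(l_{i,n})$ required by condition \eqref{lem:irrationalityCrit:ii} of Lemma \ref{lem:irrationalityCrit} comes out with room to spare; I would handle this by a careful case analysis of which of $k$, $n+1-k$ is divisible by the prime $n+1$, noting that exactly one term in each partial-fraction sum is "maximal" at $l(n)$.
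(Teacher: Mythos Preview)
Your proposal has a genuine gap: you have misidentified the source of the negative $l(n)$-adic valuation. You write that ``since $n$ is divisible by $p^{l_p}b_{\max}$, the quantities $\nu+\theta_{\min}$ have no $l(n)$ in numerator or denominator'' and that ``the $l(n)$-adic size is therefore governed by the $r_{i,k}$''. This is false at exactly one point, and that one point is the whole story. Since $\theta_{\min}=1/(p^{l_p}b_{\max})$ and $p^{l_p}b_{\max}\mid n$, the integer $\nu_0=\theta_{\min}n$ lies in $[0,n-1]$ and
\[
\nu_0+\theta_{\min}=\frac{n+1}{p^{l_p}b_{\max}}=\frac{l(n)}{p^{l_p}b_{\max}},
\]
so $(\nu_0+\theta_{\min})^{-i}$ carries $l(n)$-valuation $-i$. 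By Lemma~\ref{lemma_r_ik} and $l(n)\nmid d_n$, every $r_{i,k}$ has non-negative $l(n)$-valuation, so the $r_{i,k}$ cannot themselves produce the required negative power. The entire negative contribution comes from the single pair $(\nu_0,k_0)=(\theta_{\min}n,n)$; the paper isolates it by rewriting that term via the partial-fraction expansion as $-R_n(-\theta_{\max}n+\theta_{\min})$ plus an $l(n)$-integral remainder, and then estimates $v_{l(n)}\bigl(R_n(-\theta_{\max}n+\theta_{\min})\bigr)$ directly from Definition~\ref{definition_Rn(t)}. It is here that the special length $\theta_{\max}n-1$ of the $\theta_{\max}$-Pochhammer matters: $(t+\theta_{\max})_{\theta_{\max}n-1}\big|_{t=-\theta_{\max}n+\theta_{\min}}=\pm(\theta_{\max}n-1)!$ is prime to $l(n)$, while each of the remaining $|\mathcal Z_B|-1$ factors $(t+\theta)_n$ can contribute at most one $l(n)$, and $(t)_{n+1}^s$ contributes exactly $l(n)^s$ in the denominator.

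Your plan for integrality also breaks down. Running the Leibniz argument of Lemma~\ref{lemma_rho_0theta_general} with $F_{\theta_{\max}}$ requires $k_0-\nu_0-1\in[0,\deg F_{\theta_{\max}}-1]=[0,\theta_{\max}n-2]$ in order to invoke Lemma~\ref{lemma_F(t)}(2); this fails whenever $\nu_0\leqslant\theta_{\min}n$ and $k_0$ is large, in particular at $(\nu_0,k_0)=(\theta_{\min}n,n)$. The paper therefore splits the index set into three pieces: for $\nu_0>\theta_{\min}n$ (or $\nu_0=\theta_{\min}n$, $k_0<n$) your Leibniz route does work and gives $d_n^s$-integrality; for $\nu_0<\theta_{\min}n$ one has $p^{l_p}b_{\max}\nu_0+1\leqslant n$, so $d_n(\nu_0+\theta_{\min})^{-1}\in\mathbb Z$ and Lemma~\ref{lemma_r_ik} suffices; only the remaining single point forces the extra $l(n)^s$. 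Your ``cleaner route'' locating the $l(n)^s$ in $G(t)^s$ is incorrect as stated: $G(t)(t+k_0)\big|_{t=-k_0}=(-1)^{k_0}\binom{n}{k_0}$ is prime to $l(n)=n+1$.
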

	
\begin{proof}
By \eqref{definition_rho_0theta}, we have
\begin{align}
\rho_{0,\theta_{\min}} &= \sum_{k=1}^n\sum_{\nu=0}^{k-1} \left(-\sum_{i=1}^{s}\frac{r_{i,k}}{(\nu+\theta_{\min})^{i}} \right) \notag\\
&= \sum_{(\nu,k) \in \Gamma} \left(-\sum_{i=1}^{s}\frac{r_{i,k}}{(\nu+\theta_{\min})^{i}} \right), \label{eqn1}
\end{align}
where $\Gamma$ is the set
\[ \Gamma = \left\{ (\nu,k)\in\mathbb{Z}^2 \mid 0 \leqslant \nu < k \leqslant n \right\}. \]
We split $\Gamma$ into three disjoint subsets: $\Gamma = \Gamma_1 \cup \Gamma_2 \cup \{ (\theta_{\min}n,n) \}$, where
\begin{align*}
	\Gamma_1 &= \left\{ (\nu,k) \in \Gamma \mid \nu < \theta_{\min}n \right\}, \\
	\Gamma_2 &= \left\{ (\nu,k) \in \Gamma \mid \nu > \theta_{\min}n \text{~or~} \nu = \theta_{\min}n \text{~and~} k < n \right\}.
\end{align*}
In the following, we study the denominator of 
\[ -\sum_{i=1}^{s}\frac{r_{i,k_0}}{(\nu_0+\theta_{\min})^{i}} \]
in three cases. Recall that $\theta_{\min}=\frac{1}{p^{l_p}b_{\max}}$.

Case $(1)$: $(\nu_0,k_0) \in \Gamma_1$. In this case, we have $0<p^{l_p}b_{\max}\nu_0 + 1 \leqslant n$. So, $d_n \cdot (\nu_0 + \theta_{\min})^{-1} = d_n p^{l_p}b_{\max} \cdot (p^{l_p}b_{\max}\nu_0 + 1)^{-1} \in \mathbb{Z}$. We have $d_n^{s-i}r_{i,k_0} \in \mathbb{Z}$ by Lemma \ref{lemma_r_ik}. Therefore,
\begin{equation} \label{case_1}
d_n^s \cdot\left(-\sum_{i=1}^{s}\frac{r_{i,k_0}}{(\nu_0+\theta_{\min})^{i}}\right) = - \sum_{i=1}^{s} d_n^{s-i}r_{i,k_0} \cdot \left(d_n \cdot (\nu_0 + \theta_{\min})^{-1}\right)^i \in \mathbb{Z} \quad\text{for~} (\nu_0,k_0) \in \Gamma_1.
\end{equation}

Case $(2)$: $(\nu_0,k_0) \in \Gamma_2$. Similar to the proof of Lemma \ref{lemma_rho_0theta_general}, we have 
\begin{align*}
	&d_n^{s} \cdot \left(-\sum_{i=1}^{s}\frac{r_{i,k_0}}{(\nu_0+\theta_{\min})^{i}} \right) \\
	=& A \sum_{\boldsymbol{\lambda}} d_n^{\lambda_{*}+1}D_{\lambda_{*}}\left( F_{\theta_{\max}}(t)\cdot\frac{1}{t+\theta_{\max}+(k_0-\nu_0-1)} \right) \\ &\qquad\qquad\times\prod_{j=1}^{s}d_n^{\lambda_j}D_{\lambda_j}(G(t)(t+k_0))\prod_{\theta \in \mathcal{Z}_B\setminus\{\theta_{\max}\}}d_n^{\lambda_\theta}D_{\lambda_\theta}(F_{\theta}(t))\big|_{t=-k_0},
\end{align*}
where $A$ is an integer, the functions $G(t)$ and $F_{\theta}(t)$ ($\theta \in \mathcal{Z}_B$) are defined by \eqref{def_G(t)}, \eqref{def_F_theta(t)} and \eqref{def_F_theta_max(t)}, and the sum $\sum_{\boldsymbol{\lambda}}$ is taken over all families of non-negative integers 
\[\boldsymbol{\lambda} = (\lambda_*, (\lambda_{j})_{1 \leqslant j \leqslant s},(\lambda_{\theta})_{\theta \in \mathcal{Z}_B\setminus\{\theta_{\max}\}})\]
such that
\[ \lambda_* + \sum_{j=1}^{s} \lambda_j + \sum_{\theta \in \mathcal{Z}_B\setminus\{\theta_{\max}\}} \lambda_{\theta} = s-1. \]
Note that $\deg F_{\theta_{\max}}(t) = \theta_{\max}n-1$. And $(\nu_0,k_0) \in \Gamma_2$ implies that $k_0-\nu_0-1 \in [0,\theta_{\max}n-2]$. By $(2)$ of Lemma \ref{lemma_F(t)} (with $n$ replaced by $\theta_{\max}n-1$), we have
\[ \left. d_n^{\lambda_{*}+1}D_{\lambda_{*}}\left( F_{\theta_{\max}}(t)\cdot\frac{1}{t+\theta_{\max}+(k_0-\nu_0-1)} \right)\right|_{t=-k_0} \in \mathbb{Z}. \]
By Lemma \ref{lemma_G(t)} and $(1)$ of Lemma \ref{lemma_F(t)}, we also have
\[ d_n^{\lambda_j}D_{\lambda_j}(G(t)(t+k_0))\big|_{t=-k_0} \in \mathbb{Z}, \quad d_n^{\lambda_\theta}D_{\lambda_\theta}(F_{\theta}(t))\big|_{t=-k_0} \in \mathbb{Z}. \]
Thus, 
\begin{equation}\label{case_2}
d_n^s \cdot\left(-\sum_{i=1}^{s}\frac{r_{i,k_0}}{(\nu_0+\theta_{\min})^{i}}\right) \in \mathbb{Z} \quad\text{for~} (\nu_0,k_0) \in \Gamma_2.
\end{equation}

Case $(3)$: $(\nu_0,k_0) = (\theta_{\min}n,n)$. In this case, $\nu_0+\theta_{\min} = \ell(n)/p^{l_p}b_{\max}$. So, with the help of Lemma \ref{lemma_r_ik}, we have
\begin{multline}\label{case_3}
\ell(n)^sd_n^s \cdot \left(-\sum_{i=1}^{s}\frac{r_{i,k_0}}{(\nu_0+\theta_{\min})^{i}}\right)\\ = -\sum_{i=1}^{s} d_n^{s}r_{i,k_0} \cdot \ell(n)^{s-i}\cdot(p^{l_p}b_{\max})^{i}\in \mathbb{Z} \text{~for~} (\nu_0,k_0)= (\theta_{\min}n,n). 
\end{multline}
Substituting \eqref{case_1}, \eqref{case_2} and \eqref{case_3} into \eqref{eqn1}, we obtain that
\[ \ell(n)^sd_n^s \rho_{0,\theta_{\min}} \in \mathbb{Z}. \]
Now we study the $\ell(n)$-adic order for case $(3)$. By \eqref{definition_r_ik} we have
\begin{equation}\label{eqn2} -\sum_{i=1}^{s}\frac{r_{i,k_0}}{(\nu_0+\theta_{\min})^{i}} = -R_n(-\theta_{\max}n+\theta_{\min}) + \sum_{i=1}^{s}\sum_{k=0}^{n-1} \frac{r_{i,k}}{(-\theta_{\max}n+\theta_{\min}+k)^i}. 
\end{equation}
For any $0 \leqslant k \leqslant n-1$, we have
\[ -\theta_{\max}n+\theta_{\min}+k = \frac{1-p^{l_p}b_{\max}}{p^{l_p}b_{\max}}\ell(n) + k +1, \]
so $v_{\ell(n)}(-\theta_{\max}n+\theta_{\min}+k) = 0$ if $n$ is sufficiently large ($n > p^{l_p}b_{\max}$). By Lemma \ref{lemma_r_ik}, we have $v_{\ell(n)}(r_{i,k}) \geqslant 0$. Therefore,
\begin{equation}\label{eqn3}
v_{\ell(n)}\left( \sum_{i=1}^{s}\sum_{k=0}^{n-1} \frac{r_{i,k}}{(-\theta_{\max}n+\theta_{\min}+k)^i} \right) \geqslant 0. 
\end{equation}
On the other hand, we claim that
\begin{equation}\label{eqn4}
v_{\ell(n)}\left( R_n(-\theta_{\max}n+\theta_{\min}) \right) \leqslant -s + |\mathcal{Z}_B| - 1 < 0.
\end{equation}
In fact, for sufficiently large $n \in I$, clearly we have
\begin{align*}
			v_{\ell(n)}\left( A_1(B)^{n}A_2(B)^{n}\frac{n!^{s-|\mathcal{Z}_B|+1}}{(\theta_{\max}n-1)!} \right) = 0.
\end{align*}
Since 
\[ (t+\theta_{\max})_{\theta_{\max}n-1} \big|_{t = -\theta_{\max}n+\theta_{\min}} = (-1)^{\theta_{\max}n-1}(\theta_{\max}n-1)!, \]
we have
\[ v_{\ell(n)}\left( (t+\theta_{\max})_{\theta_{\max}n-1} \big|_{t = -\theta_{\max}n+\theta_{\min}}\right) = 0. \]
For any $\theta \in \mathcal{Z}_{B} \setminus \{ \theta_{\max}\}$, 
\[ (t+\theta)_{n} \big|_{t = -\theta_{\max}n+\theta_{\min}} = \prod_{j=0}^{n-1} (t+j+\theta) \big|_{t = -\theta_{\max}n+\theta_{\min}} \]
is a product of $n$ factors. Among these $n$ factors, at most one factor has positive $\ell(n)$-adic valuation, and no factor has $\ell(n)$-adic valuation $\geqslant 2$ (since $n$ is sufficiently large). So we have
\[ v_{\ell(n)}\left( (t+\theta)_{n} \big|_{t = -\theta_{\max}n+\theta_{\min}} \right) \leqslant 1 \quad\text{for any~} \theta \in \mathcal{Z}_{B} \setminus \{ \theta_{\max}\}. \]
At last, 
\begin{multline*}
(t)_{n+1} \big|_{t = -\theta_{\max}n+\theta_{\min}} = \prod_{j=1}^{n+1} \left( \frac{1-p^{l_p}b_{\max}}{p^{l_p}b_{\max}}\ell(n)+j \right)\\=\prod_{j=1}^{n+1} \left( \frac{\ell(n)}{p^{l_p}b_{\max}}-\ell(n)+j \right)= \frac{\ell(n)}{p^{l_p}b_{\max}} \cdot \prod_{j=1}^{n} \left( \frac{\ell(n)}{p^{l_p}b_{\max}} - j \right).
\end{multline*}
So 
\[ v_{\ell(n)}\left( (t)_{n+1} \big|_{t = -\theta_{\max}n+\theta_{\min}} \right) = 1.\]
In conclusion, we have
\[ v_{\ell(n)}\left( R_n(t) \big|_{t = -\theta_{\max}n+\theta_{\min}}\right) \leqslant -s + |\mathcal{Z}_B| - 1. \]
Thus \eqref{eqn4} is true. Combining \eqref{eqn2},\eqref{eqn3} and \eqref{eqn4}, we have
\[ v_{\ell(n)} \left( -\sum_{i=1}^{s}\frac{r_{i,k_0}}{(\nu_0+\theta_{\min})^{i}} \right) \leqslant -s + |\mathcal{Z}_B| - 1 \quad\text{for~} (\nu_0,k_0) = (\theta_{\min}n,n). \]
By \eqref{case_1} and \eqref{case_2}, we have
\[ v_{\ell(n)} \left( -\sum_{i=1}^{s}\frac{r_{i,k_0}}{(\nu_0+\theta_{\min})^{i}} \right) \geqslant 0 \quad\text{for~} (\nu_0,k_0) \in \Gamma_1 \cup \Gamma_2.  \] 
Therefore, by \eqref{eqn1} we obtain that
\[ v_{\ell(n)}(\rho_{0,\theta_{\min}}) \leqslant  -s + |\mathcal{Z}_B| - 1. \]
The proof of Lemma \ref{lemma_rho_0theta_min} is complete.
\end{proof}

\begin{lemma}\label{lemma_integrality_rho_0_b}
For any $b \in \Psi_{B} \setminus \{b_{\max}\}$, we have 
\[ d_n^{s}\rho_{0,b} \in \mathbb{Z}. \]
For $b= b_{\max}$, we have
\[ \ell(n)^sd_n^s\rho_{0,b_{\max}} \in \mathbb{Z} \]
and 
\[ v_{\ell(n)}(\rho_{0,b_{\max}}) \leqslant -s+|\mathcal{Z}_B| -1. \]	
\end{lemma}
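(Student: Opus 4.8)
The plan is to read off Lemma~\ref{lemma_integrality_rho_0_b} directly from the two preceding lemmas, Lemma~\ref{lemma_rho_0theta_general} and Lemma~\ref{lemma_rho_0theta_min}, applied term by term to the defining sum \eqref{definition_rho_0b},
\[ \rho_{0,b} = \sum_{1 \leqslant j \leqslant p^{l_p}b,\ \gcd(j,p)=1} \rho_{0,j/p^{l_p}b}, \]
in which, by Lemma~\ref{lemma_a_simple_property_of_Z_B}, every index $\theta = j/p^{l_p}b$ lies in $\mathcal{Z}_B$, so that each $\rho_{0,\theta}$ makes sense. The one point I would check with care is which of these $\theta$ can equal $\theta_{\min} = 1/(p^{l_p}b_{\max})$: the equality $j/(p^{l_p}b) = 1/(p^{l_p}b_{\max})$ is $jb_{\max} = b$, which together with $j \geqslant 1$ and $b \leqslant b_{\max}$ forces $j = 1$ and $b = b_{\max}$. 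Hence for $b \neq b_{\max}$ none of the summands of $\rho_{0,b}$ involves $\theta_{\min}$, while for $b = b_{\max}$ exactly one summand, the one with $j = 1$, is $\rho_{0,\theta_{\min}}$.

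For $b \in \Psi_B \setminus \{b_{\max}\}$, the observation above shows every $\theta$ occurring in \eqref{definition_rho_0b} lies in $\mathcal{Z}_B \setminus \{\theta_{\min}\}$, so Lemma~\ref{lemma_rho_0theta_general} gives $d_n^s\rho_{0,\theta} \in \mathbb{Z}$ for each of them, and summing over $j$ yields $d_n^s\rho_{0,b} \in \mathbb{Z}$.

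For $b = b_{\max}$, I would write $\rho_{0,b_{\max}} = \rho_{0,\theta_{\min}} + \Sigma$, where $\Sigma$ collects the remaining terms, all of the form $\rho_{0,\theta}$ with $\theta \in \mathcal{Z}_B \setminus \{\theta_{\min}\}$. Then $d_n^s\Sigma \in \mathbb{Z}$ by Lemma~\ref{lemma_rho_0theta_general}, while $l(n)^sd_n^s\rho_{0,\theta_{\min}} \in \mathbb{Z}$ by Lemma~\ref{lemma_rho_0theta_min}; adding these gives $l(n)^sd_n^s\rho_{0,b_{\max}} \in \mathbb{Z}$. For the valuation bound, I would use that every prime factor of $d_n = \operatorname{LCM}[1,\dots,n]$ is at most $n < l(n) = n+1$, so $v_{l(n)}(d_n) = 0$ and hence $v_{l(n)}(\Sigma) = v_{l(n)}(d_n^s\Sigma) \geqslant 0$. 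On the other hand Lemma~\ref{lemma_rho_0theta_min} gives $v_{l(n)}(\rho_{0,\theta_{\min}}) \leqslant -s + |\mathcal{Z}_B| - 1$, which is strictly negative since we have arranged $-s + |\mathcal{Z}_B| + 1 < 0$. As the two summands of $\rho_{0,b_{\max}} = \rho_{0,\theta_{\min}} + \Sigma$ then have strictly different $l(n)$-adic valuations, the ultrametric inequality forces $v_{l(n)}(\rho_{0,b_{\max}}) = v_{l(n)}(\rho_{0,\theta_{\min}}) \leqslant -s + |\mathcal{Z}_B| - 1$, which is the asserted bound. There is no real obstacle beyond the bookkeeping of isolating the unique $\theta_{\min}$-term and invoking $l(n) > n$; all the substantive work is already contained in Lemmas~\ref{lemma_rho_0theta_general} and~\ref{lemma_rho_0theta_min}.
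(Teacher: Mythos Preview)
Your proof is correct and follows exactly the approach the paper intends: the paper's own proof is a single sentence (``It follows immediately from \eqref{definition_rho_0b}, Lemma~\ref{lemma_rho_0theta_general} and Lemma~\ref{lemma_rho_0theta_min}''), and you have simply unpacked that sentence, including the bookkeeping that isolates the unique $\theta_{\min}$-summand and the observation $v_{l(n)}(d_n)=0$ needed for the valuation bound.
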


\begin{proof}
It follows immediately from \eqref{definition_rho_0b}, Lemma \ref{lemma_rho_0theta_general} and Lemma \ref{lemma_rho_0theta_min}.
\end{proof}

\section{$p$-adic norm}\label{sec:p-adic}
In this section, we will bound the $p$-adic norm of the linear forms constructed in section \ref{sec:LinearForms}. Recall that we have assumed $ s > |\mathcal{Z}_B|$. 

\begin{lemma}\label{lemma_u_k}
For any $\theta \in \mathcal{Z}_B$, we have $R_n(t+\theta) \in C^\dagger(\mathbb{Z}_p,\mathbb{Q}_p)$. Moreover, if we write
\[ R_n(t+\theta) = \sum_{k=0}^{\infty} u_k t^{k}, \]
then $u_k \in \mathbb{Q}$ and 
\[ v_p(u_k) \geqslant \left( l_p + \frac{1}{p-1} \right)sn - \frac{\log(n+1)}{\log p}(s-|\mathcal{Z}_B|) +l_p(s+k) \]
for any integer $k \geqslant 0$.
\end{lemma}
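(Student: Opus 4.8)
\medskip
\noindent\textit{Proof plan.} \textbf{Overconvergence and rationality.} The plan is first to dispatch the easy part: all the data defining $R_n(t)$ in Definition~\ref{definition_Rn(t)} are rational and $\theta\in\mathbb{Q}$, so $R_n(t+\theta)\in\mathbb{Q}(t)$, and its only poles are the zeros of $(t+\theta)_{n+1}$, namely the points $-(k+\theta)$ with $0\leqslant k\leqslant n$; since $v_p(k)\geqslant0>v_p(\theta)$ each pole satisfies $|k+\theta|_p=|\theta|_p\geqslant p^{l_p}\geqslant q_p>1$. In particular $t=0$ is not a pole, so the Taylor expansion $\sum_{k\geqslant0}u_kt^k$ of $R_n(t+\theta)$ at $0$ has $u_k\in\mathbb{Q}$ and radius of convergence equal to the distance from $0$ to the nearest pole, hence $\geqslant p^{l_p}>1$; thus $R_n(t+\theta)\in C^{\dagger}(\mathbb{Z}_p,\mathbb{Q}_p)$.

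\textbf{Set-up of the valuation estimate.} For $\alpha\in\mathcal{Z}_B$ I write $\alpha=a/(p^{l_p}b(\alpha))$ in lowest terms, so $p\nmid a$ and $-v_p(\alpha)=l_p+v_p(b(\alpha))=:m_{\alpha}\geqslant l_p$. Substituting $t\mapsto t+\theta$ into the product formula of Definition~\ref{definition_Rn(t)}, I would group the result as
\[
R_n(t+\theta)=\underbrace{A_2(B)^n\cdot\frac{n!^{\,s-|\mathcal{Z}_B|+1}}{(\theta_{\max}n-1)!}}_{=:c_n}\cdot\bigl(A_1(B)^n\,N(t)\bigr)\cdot\frac{1}{(t+\theta)_{n+1}^{\,s}},
\]
with $N(t)=(t+\theta+\theta_{\max})_{\theta_{\max}n-1}\prod_{\theta'\in\mathcal{Z}_B\setminus\{\theta_{\max}\}}(t+\theta+\theta')_{n}$, and then bound the $p$-adic valuations of the power-series coefficients of $c_n$, of $A_1(B)^nN(t)$, and of $1/(t+\theta)_{n+1}^{s}$ separately, combining them by the elementary rule that the $t^{k}$-coefficient of a product of power series over $\mathbb{Q}_p$ has valuation at least the minimum, over all decompositions $k=\sum k_i$, of the sum of the $t^{k_i}$-coefficient valuations of the factors.

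\textbf{The three factors.} For the \emph{denominator}, write $1/(t+\theta)_{n+1}^{s}=\prod_{k=0}^{n}(\theta+k)^{-s}\bigl(1+t/(\theta+k)\bigr)^{-s}$; since $v_p(\theta+k)=-m_{\theta}$ and $(1+x)^{-s}=\sum_{j}\binom{-s}{j}x^{j}$ has integer coefficients, the $t^{j}$-coefficient of $(t+\theta+k)^{-s}$ has valuation $\geqslant(s+j)m_{\theta}$, so the $t^{m}$-coefficient of $1/(t+\theta)_{n+1}^{s}$ has valuation $\geqslant(s(n+1)+m)m_{\theta}$ with $m_{\theta}=l_p+v_p(b(\theta))$. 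For the \emph{numerator}, observe $A_1(B)=\prod_{\alpha\in\mathcal{Z}_B}(p^{l_p}b(\alpha))$ supplies one block $(p^{l_p}b(\theta'))$ per $\theta'\in\mathcal{Z}_B$; multiplying $(t+\theta+\theta')_{n}$ by $(p^{l_p}b(\theta'))^{n}$ gives $\prod_{j=0}^{n-1}\bigl(p^{l_p}b(\theta')\,t+c_{j}\bigr)$ with $v_p(c_j)\geqslant-v_p(b(\theta))$ (since $p^{l_p}b(\theta')\theta'=a'\in\mathbb{Z}$ and $v_p(p^{l_p}b(\theta')\theta)=v_p(b(\theta'))-v_p(b(\theta))\geqslant-v_p(b(\theta))$) and leading coefficient of valuation $l_p+v_p(b(\theta'))$, so its $t^{m}$-coefficient has valuation $\geqslant l_pm-(n-m)v_p(b(\theta))$; the $\theta_{\max}$-block behaves the same way (with $n$ replaced by $\theta_{\max}n-1$, the residual power $(p^{l_p}b_{\max})^{n-\theta_{\max}n+1}$ having nonnegative valuation), and convolving over $\mathcal{Z}_B$ the $t^{M}$-coefficient of $A_1(B)^nN(t)$ has valuation $\geqslant l_pM-v_p(b(\theta))\,|\mathcal{Z}_B|\,n$. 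For the \emph{scalar}, only $q=p$ contributes to $v_p(A_2(B))$ and $\sum_{b\in\Psi_B}\varphi(p^{l_p}b)=|\mathcal{Z}_B|$, giving $v_p(A_2(B)^n)=n|\mathcal{Z}_B|/(p-1)$; Legendre's formula gives $v_p(n!)\geqslant\frac{n}{p-1}-\frac{\log(n+1)}{\log p}$ and $v_p((\theta_{\max}n-1)!)\leqslant\frac{\theta_{\max}n-1}{p-1}$, so $v_p(c_n)\geqslant\frac{sn}{p-1}+\frac{n(1-\theta_{\max})+1}{p-1}-(s-|\mathcal{Z}_B|+1)\frac{\log(n+1)}{\log p}$.

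\textbf{Assembly, and the expected main obstacle.} Convolving the numerator and denominator bounds, the $l_p$-part of $m_{\theta}$ contributes $\geqslant l_p(s(n+1)+k)$ to $v_p(u_k)$ while its $v_p(b(\theta))$-part contributes $\geqslant v_p(b(\theta))\bigl(s(n+1)-|\mathcal{Z}_B|n\bigr)\geqslant0$, exactly cancelling the numerator's deficit $v_p(b(\theta))|\mathcal{Z}_B|n$ because $s(n+1)>|\mathcal{Z}_B|n$ (recall $|\mathcal{Z}_B|\sim a_pB^2/2$ and $B=\widetilde{c}_p\sqrt{s/\log s}$, so $|\mathcal{Z}_B|<s$). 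Adding $v_p(c_n)$ and using that $\tfrac{n(1-\theta_{\max})+1}{p-1}\geqslant\tfrac{\log(n+1)}{\log p}$ for $n\in I$ (valid since $1-\theta_{\max}=1/(p^{l_p}b_{\max})$ and $B$, hence every element of $I$, is large), the bound collapses to $v_p(u_k)\geqslant(l_p+\tfrac{1}{p-1})sn-\tfrac{\log(n+1)}{\log p}(s-|\mathcal{Z}_B|)+l_p(s+k)$. I expect the genuinely delicate point to be the numerator estimate: the Pochhammers $(t+\theta+\theta')_{n}$ are \emph{not} $p$-adically small — their coefficients can sink to valuation about $-n\,v_p(b(\theta))$ below the ``expected'' value, and $A_1(B)^n$ does not on its own make $A_1(B)^nN(t)$ integral — so one must pair each Pochhammer with its own block of $A_1(B)^n$ and then play the surviving deficit off against the \emph{same} shift $\theta$, which occurs $s(n+1)\gg|\mathcal{Z}_B|n$ times in the $s$-th power of the denominator; making this balance close uniformly in $k$ is the heart of the matter.
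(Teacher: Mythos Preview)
Your plan is essentially correct and the three factor estimates are right, but the final step of the assembly has a gap. You claim
\[
\frac{n(1-\theta_{\max})+1}{p-1}\;\geqslant\;\frac{\log(n+1)}{\log p}\qquad\text{for all }n\in I,
\]
justifying this only by ``$B$, hence every element of $I$, is large''. That reasoning is insufficient: the inequality is $n\theta_{\min}\geqslant(p-1)\log_p(n+1)-1$, and since $\theta_{\min}=1/(p^{l_p}b_{\max})$ depends on $B$, knowing that $n$ is absolutely large does not control the ratio $n/b_{\max}$. For a fixed $B$ the inequality \emph{eventually} holds in $n$ (linear beats logarithmic), but showing it from the smallest $n\in I$ onward would require a separate argument comparing $P_B$ to $\log(p^{l_p}b_{\max})$. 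The fix is immediate and avoids the issue entirely: do not estimate $v_p((\theta_{\max}n-1)!)$ via Legendre's upper bound, but instead observe that $n!/(\theta_{\max}n-1)!=\theta_{\max}n\cdot(\theta_{\max}n+1)\cdots n$ is a positive integer. Then
\[
v_p(c_n)\;\geqslant\;v_p\bigl(A_2(B)^n\bigr)+(s-|\mathcal{Z}_B|)\,v_p(n!)\;\geqslant\;\frac{sn}{p-1}-(s-|\mathcal{Z}_B|)\frac{\log(n+1)}{\log p},
\]
and combining with your bound $l_p(s(n+1)+k)$ for the numerator--denominator convolution gives the target exactly, with no residual inequality.

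It is worth noting that the paper's own proof sidesteps your whole $v_p(b(\theta))$ bookkeeping. Writing $\theta=a_0/(p^{l_p}b_0)$ with $\gcd(a_0,p)=1$, the paper multiplies each numerator factor $t+\theta+\theta'+j$ by $p^{l_p}b(\theta')b_0$ and each denominator factor $t+\theta+k$ by $p^{l_p}b_0$; the numerator then lies in $\mathbb{Z}[p^{l_p}t]$, and because the constant terms $a_0+p^{l_p}b_0k$ in the denominator are $p$-adic \emph{units}, the reciprocal lies in $\mathbb{Z}_p\llbracket p^{l_p}t\rrbracket$. Thus $R_n(t+\theta)$ is a scalar times a series in $\mathbb{Z}_p\llbracket p^{l_p}t\rrbracket$, and the bound $v_p(u_k)\geqslant v_p(\text{scalar})+l_pk$ follows at once without ever invoking $s(n+1)>|\mathcal{Z}_B|n$ to cancel a numerator deficit against a denominator surplus. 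Your route, once patched, is a legitimate alternative and would remain usable even if $p\mid a_0$, whereas the paper's unit trick would not; but for the situation at hand the paper's normalisation is cleaner.
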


\begin{proof}
Fix $\theta = a_0/p^{l_p}b_0 \in \mathcal{Z}_B$, where $b_0 \in \Psi_B$, $1 \leqslant a_0 \leqslant p^{l_p}b_0$ and $\gcd(a_0,p^{l_p}b_0) = 1$. Substituting $t+\theta$ for $t$ in Definition \ref{definition_Rn(t)}, we have
\begin{align*}
	R_n(t+\theta) &= A_1(B)^n A_2(B)^n \frac{n!^{s-|\mathcal{Z}_B|+1}}{(\theta_{\max}n-1)!} \\
	&\qquad\times \prod_{b \in \Psi_B} \prod_{1 \leqslant a \leqslant p^{l_p}b \atop \gcd(a,p^{l_p}b)=1}\prod_{k=0}^{n-1-\delta(a,b)} \left( t+\frac{a}{p^{l_p}b} + \frac{a_0}{p^{l_p}b_{0}} + k \right) \\
	&\qquad\times \prod_{k=0}^{n} \left( t+ \frac{a_0}{p^{l_p}b_{0}} + k \right)^{-s},
\end{align*} 
where
\[ \delta(a,b) = \begin{cases} \theta_{\min}n+1, &\text{if~} b = b_{\max} \text{~and~} a=p^{l_p}b_{\max} - 1, \\  0, &\text{otherwise.}  \end{cases} \]
By multiplying or dividing by a suitable integer for each factor of the product, we have
\begin{align}
	R_n(t+\theta) &= (p^{l_p}b_{\max})^{\theta_{\min}n+1} A_2(B)^n \frac{n!^{s-|\mathcal{Z}_B|+1}}{(\theta_{\max}n-1)!} \cdot p^{(n+1)sl_p} b_0^{(n+1)s - n|\mathcal{Z}_B| + \theta_{\min}n+1} \label{constant_term}\\
	&\qquad\times \prod_{b \in \Psi_B} \prod_{1 \leqslant a \leqslant p^{l_p}b \atop \gcd(a,p^{l_p}b)=1}\prod_{k=0}^{n-1-\delta(a,b)} \left( p^{l_p}bb_0t+ab_0 + a_0b + p^{l_p}bb_0k \right) \label{product_term_1}\\
	&\qquad\times \prod_{k=0}^{n} \left( p^{l_p}b_0t+ a_0 + p^{l_p}b_0k \right)^{-s}. \label{product_term_2}
\end{align}
Clearly, the product in the line \eqref{product_term_1} belongs to $\mathbb{Z}[p^{l_p}t]$. Noticing that $a_0$ is a $p$-adic unit, the product in the line \eqref{product_term_2} belongs to $\mathbb{Z}_p\llbracket p^{l_p}t \rrbracket \cap \mathbb{Q}\llbracket p^{l_p}t \rrbracket$. Therefore, $R_n(t+\theta)$ is the product of the constant in the line \eqref{constant_term} and a power series in $\mathbb{Z}_p\llbracket p^{l_p}t \rrbracket \cap \mathbb{Q}\llbracket p^{l_p}t \rrbracket$. The radius of convergence of this power series is at least $p^{l_p} > 1$, so $R_n(t+\theta)$ is overconvergent. Moreover, if we write
\[ R_n(t+\theta) = \sum_{k=0}^{\infty} u_k t^{k}, \]
then $u_k \in \mathbb{Q}$ and 
\begin{align}
	v_p(u_k) &\geqslant v_p\left( \text{the constant in the line \eqref{constant_term}} \right) + l_pk \notag\\
	&\geqslant v_p\left( A_2(B)^{n} n!^{s-|\mathcal{Z}_B|} p^{(n+1)sl_p} \right) + l_pk. \label{v_p(u_k)}
\end{align}
Obviously, we have
\begin{equation}\label{v_p(A_2)}
	v_p(A_2(B)^n) = \frac{n}{p-1} \sum_{b \in \Psi_B} \varphi(p^{l_p}b) = \frac{n|\mathcal{Z}_B|}{p-1} 
\end{equation}
and
\begin{equation}\label{v_p(n!)}
	v_p(n!) \geqslant \frac{n}{p-1} - \frac{\log(n+1)}{\log p}.
\end{equation}
Substituting \eqref{v_p(A_2)} and \eqref{v_p(n!)} into \eqref{v_p(u_k)}, we obtain the desired inequality for $v_p(u_k)$.
\end{proof}

\begin{lemma}\label{lemma_h_k}
	For any $t \in \mathbb{Q}_p$ such that $|t|_p > 1$, we have
	\[ R_n(t) = \sum_{k=(n+1)s-n|\mathcal{Z}_B|+\theta_{\min}n+1}^{\infty} \frac{h_k}{t^k}, \]
	where $h_k \in \mathbb{Z}$ and
	\[ v_p(h_k) \geqslant \frac{sn}{p-1}-\frac{\log(n+1)}{\log p}(s-|\mathcal{Z}_B|) + l_p\max\{0,(n+1)s-k\} \]
	for any integer $k \geqslant (n+1)s-n|\mathcal{Z}_B|+\theta_{\min}n+1$. Moreover, for any $t \in \mathbb{Q}_p$ such that $|t|_p \geqslant q_p$, we have
	\[ \widetilde{R}_n(t) = \sum_{k=(n+1)s-n|\mathcal{Z}_B|+\theta_{\min}n+1}^{\infty} \frac{h_{k}}{(1-k)t^{k-1}}. \]
\end{lemma}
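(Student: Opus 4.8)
The plan is to expand $R_n$ at $t=\infty$ --- equivalently, to study the power series $R_n(1/t)$ --- following the same scheme as the proof of Lemma~\ref{lemma_u_k}, the difference being that here the $p$-divisibility of the coefficients comes from the \emph{constant} terms of the rescaled Pochhammer factors rather than from their leading coefficients.

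\textbf{Integrality and the leading index.} First I would clear denominators. For $\theta=a/(p^{l_p}b)\in\mathcal{Z}_B$ we have $(p^{l_p}b)^{m}(t+\theta)_{m}=\prod_{j=0}^{m-1}(p^{l_p}b\,t+a+jp^{l_p}b)\in\mathbb{Z}[t]$, and since $\prod_{\theta\in\mathcal{Z}_B}(p^{l_p}b_\theta)^{n}=A_1(B)^{n}$ (with $b_\theta$ the denominator of $\theta$ in lowest terms), the factor $A_1(B)^{n}$, together with the integers $A_2(B)^{n}$ and $n!^{s-|\mathcal{Z}_B|+1}/(\theta_{\max}n-1)!$, makes the product $A_1(B)^{n}A_2(B)^{n}\frac{n!^{s-|\mathcal{Z}_B|+1}}{(\theta_{\max}n-1)!}\,(t+\theta_{\max})_{\theta_{\max}n-1}\prod_{\theta\neq\theta_{\max}}(t+\theta)_n$ lie in $\mathbb{Z}[t]$; equivalently $(t)_{n+1}^{s}R_n(t)\in\mathbb{Z}[t]$. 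As $(t)_{n+1}^{s}=\prod_{j=0}^{n}(t+j)^{s}$ is monic with integer coefficients, its reciprocal is a power series in $1/t$ with integer coefficients, so the Laurent coefficients $h_k$ of $R_n$ at infinity lie in $\mathbb{Z}$. The expansion begins at $k_0:=-\deg R_n=(n+1)s-n|\mathcal{Z}_B|+\theta_{\min}n+1$ by the degree formula recorded after Definition~\ref{definition_Rn(t)}.

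\textbf{The $p$-adic estimate.} Substituting $t\mapsto1/t$ in Definition~\ref{definition_Rn(t)} gives
\[ R_n(1/t)=A_1(B)^{n}A_2(B)^{n}\frac{n!^{s-|\mathcal{Z}_B|+1}}{(\theta_{\max}n-1)!}\cdot t^{k_0}\cdot\frac{\prod_{\theta\in\mathcal{Z}_B}\prod_{j=0}^{n_\theta-1}(1+(\theta+j)t)}{\prod_{j=0}^{n}(1+jt)^{s}}, \]
where $n_\theta=n$ for $\theta\neq\theta_{\max}$ and $n_{\theta_{\max}}=\theta_{\max}n-1$, so that $\sum_\theta n_\theta=d_N:=(n+1)s-k_0$. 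Absorbing $A_1(B)^{n}=\prod_\theta(p^{l_p}b_\theta)^n$ into the numerator, each linear factor becomes $p^{l_p}b_\theta+(a_\theta+jp^{l_p}b_\theta)t$, an integer polynomial whose constant term is divisible by $p^{l_p}$ and whose linear coefficient is a $p$-adic unit; multiplying these $d_N$ factors (and tracking the leftover power $(p^{l_p}b_{\max})^{\theta_{\min}n+1}$ arising at $\theta_{\max}$, for which $n_\theta<n$) shows that the coefficient of $t^{i}$ in $A_1(B)^{n}\prod_\theta\prod_j(1+(\theta+j)t)$ has $p$-adic valuation at least $l_p(n|\mathcal{Z}_B|-i)$ and vanishes for $i>d_N$. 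Combining this with $v_p(A_2(B)^{n})=n|\mathcal{Z}_B|/(p-1)$ (see \eqref{v_p(A_2)}), with $v_p\bigl(n!^{s-|\mathcal{Z}_B|+1}/(\theta_{\max}n-1)!\bigr)\geqslant(s-|\mathcal{Z}_B|)v_p(n!)$ (because $n!/(\theta_{\max}n-1)!\in\mathbb{Z}$), with \eqref{v_p(n!)}, and with the integrality of the coefficients of $\prod_{j}(1+jt)^{-s}$, one reads $h_k$ off as the coefficient of $t^{k-k_0}$ in the product. Since the $p$-adic valuation of the numerator coefficients decreases in $i$ while the numerator has degree $d_N$, the minimum is attained at $i=\min\{k-k_0,\,d_N\}$; using $n|\mathcal{Z}_B|-(k-k_0)=(n+1)s+\theta_{\min}n+1-k$ this simplifies to
\[ v_p(h_k)\geqslant\frac{sn}{p-1}-\frac{\log(n+1)}{\log p}(s-|\mathcal{Z}_B|)+l_p\max\{0,(n+1)s-k\}. \]

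\textbf{The primitive.} By Lemma~\ref{lemma_primitive}, $\widetilde R_n$ is a primitive of $R_n$ on $\mathbb{Q}_p\setminus\{0,-1,\dots,-n\}$. For $|t|_p\geqslant q_p$ we have $|k/t|_p\leqslant q_p^{-1}$, hence $\langle t+k\rangle=\langle t\rangle\,(1+k/t)$ and $\log_p\langle t+k\rangle=\log_p\langle t\rangle+\log_p(1+k/t)$; since $\sum_{k}r_{1,k}=\rho_1=0$ by Lemma~\ref{rho_1_is_zero}, the $\log_p\langle t\rangle$ contributions cancel, and together with $(t+k)^{1-i}=t^{1-i}(1+k/t)^{1-i}$ this exhibits $\widetilde R_n$ on $\{|t|_p\geqslant q_p\}$ as a convergent power series in $1/t$ with vanishing constant term. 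Its derivative equals $\sum_{k\geqslant k_0}h_k t^{-k}$, which has no $t^{-1}$-term because $k_0\geqslant2$ (recall $\deg R_n\leqslant-2$); the unique power series in $1/t$ with zero constant term whose derivative is this series is $\sum_{k\geqslant k_0}\frac{h_k}{(1-k)t^{k-1}}$, which is therefore equal to $\widetilde R_n(t)$.

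\textbf{Expected main obstacle.} The delicate point is the $p$-adic bookkeeping in the second step: one must combine the power of $p$ contributed by the $d_N$ constant terms $p^{l_p}b_\theta$ with the contributions of $A_2(B)^{n}$ and of the factorials, and then verify that dividing by $\prod_{j=0}^{n}(1+jt)^{s}$ leaves an estimate governed by the numerator degree $d_N$ --- it is precisely this truncation that produces the piecewise term $l_p\max\{0,(n+1)s-k\}$.
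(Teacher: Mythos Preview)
Your proposal is correct and follows essentially the same approach as the paper: you factor $R_n$ into a constant times a polynomial part (coming from the Pochhammer factors in the numerator, after clearing denominators with $A_1(B)^n$) times the power series $\prod_j(1+j/t)^{-s}\in\mathbb{Z}[[t^{-1}]]$, observe that each rescaled linear factor has constant term divisible by $p^{l_p}$, and read off the graded $p$-adic bound; the primitive is then identified by the same $\rho_1=0$ cancellation and term-by-term antidifferentiation. The only cosmetic differences are that the paper works directly in the variable $1/t$ rather than substituting $t\mapsto 1/t$, and obtains integrality of the $h_k$ from the explicit product decomposition rather than from your (equally valid) monic-denominator argument.
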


\begin{proof}
By Definition \ref{definition_Rn(t)}, we have
\begin{align*}
	R_n(t) &= A_1(B)^n A_2(B)^n \frac{n!^{s-|\mathcal{Z}_B|+1}}{(\theta_{\max}n-1)!} \\
	&\qquad\times \prod_{b \in \Psi_B} \prod_{1 \leqslant a \leqslant p^{l_p}b \atop \gcd(a,p^{l_p}b)=1}\prod_{k=0}^{n-1-\delta(a,b)} \left( t+\frac{a}{p^{l_p}b} + k \right) \\
	&\qquad\times \prod_{k=0}^{n} \left( t + k \right)^{-s},
\end{align*}
where
\[ \delta(a,b) = \begin{cases} \theta_{\min}n+1, &\text{if~} b = b_{\max} \text{~and~} a=p^{l_p}b_{\max} - 1, \\  0, &\text{otherwise.}  \end{cases} \]
Thus,
\begin{align}
R_n(t) &= (p^{l_p}b_{\max})^{\theta_{\min}n+1} A_2(B)^n \frac{n!^{s-|\mathcal{Z}_B|+1}}{(\theta_{\max}n-1)!} \cdot \frac{1}{t^{(n+1)s-n|\mathcal{Z}_B|+\theta_{\min}n+1}} \label{C}\\
&\qquad\times \prod_{b \in \Psi_B} \prod_{1 \leqslant a \leqslant p^{l_p}b \atop \gcd(a,p^{l_p}b)=1}\prod_{k=0}^{n-1-\delta(a,b)} \left( p^{l_p}b+ \frac{a + p^{l_p}bk}{t} \right) \label{P(t)}\\
&\qquad\times \prod_{k=0}^{n} \left( 1 + \frac{k}{t} \right)^{-s}. \label{Q(t)}
\end{align}
Let us write
\[ R_n(t) = \frac{C}{t^{(n+1)s-k_0}} \cdot P(t) \cdot Q(t), \]
where $C$ is the constant factor in the line \eqref{C}, $P(t)$ is the product in the line \eqref{P(t)}, $Q(t)$ is the product in the line \eqref{Q(t)}, and $k_0 = n|\mathcal{Z}_B|-\theta_{\min}n-1$. 

Clearly, $P(t)$ is a polynomial in $t^{-1}$ of degree $k_0$. Moreover, if we write
\[ P(t) = a_0 + a_1 t^{-1} + \cdots + a_{k_{0}}t^{-k_0}, \]
then we have $a_j \in \mathbb{Z}$ and $v_p(a_j) \geqslant l_p(k_0-j)$ for any $j=0,1,\ldots,k_0$. It is obvious that $Q(t) \in \mathbb{Z}\llbracket t^{-1} \rrbracket$. Hence, $P(t)Q(t) \in \mathbb{Z}\llbracket t^{-1} \rrbracket$. Moreover, if we write 
\[ P(t)Q(t) = b_0 + b_1 t^{-1} + b_2t^{-2} + \cdots, \]
then $v_p(b_j) \geqslant l_p\max\{ 0, k_0 - j\}$ for any integer $j \geqslant 0$. In particular, we get for any $k\geqslant (n+1)s-k_0$ the inequality
\[
    v_p(b_{k-(n+1)s+k_0})\geqslant l_p \cdot \max\{ 0, (n+1)s-k \}.
\]
Therefore, we have 
\begin{equation}\label{h_k_power_series}
R_n(t) = \sum_{k=(n+1)s-k_0}^{\infty} h_k t^{-k}, 
\end{equation}
where $h_k = C b_{k-(n+1)s+k_0} \in \mathbb{Z}$ and
\begin{align}
	v_p(h_k) &= v_p(C) + v_p(b_{k-(n+1)s+k_0}) \notag\\
	&\geqslant v_p\left( A_2(B)^n n!^{s-|\mathcal{Z}_B|} \right) + l_p\max\{ 0, (n+1)s-k\} \label{v_p(h_k)}
\end{align}
for any integer $k \geqslant (n+1)s-k_0$. Substituting \eqref{v_p(A_2)} and \eqref{v_p(n!)} into \eqref{v_p(h_k)}, we obtain the desired inequality for $v_p(h_k)$.

Finally, since $\langle t+k \rangle = \langle 1+kt^{-1} \rangle\langle t \rangle = (1+kt^{-1})\langle t \rangle$ for any $t \in \mathbb{Q}_p$ such that $|t|_p \geqslant q_p$ and any $k \in \{0,1,\ldots,n\}$, we have
\begin{align*}
	\widetilde{R}_n(t) &= \rho_1 \log_p\langle t \rangle + \sum_{k=0}^{n} r_{1,k}\log_p\left(1+\frac{k}{t}\right) + \sum_{i=2}^{s}\sum_{k=0}^{n} \frac{r_{i,k}}{(1-i)}\frac{1}{t^{i-1}}\left(1+\frac{k}{t}\right)^{1-i} \\
 &= \sum_{k=0}^{n} r_{1,k}\log_p\left(1+\frac{k}{t}\right) + \sum_{i=2}^{s}\sum_{k=0}^{n} \frac{r_{i,k}}{(1-i)}\frac{1}{t^{i-1}}\left(1+\frac{k}{t}\right)^{1-i}
\end{align*}
by \eqref{definition_widetildeR_n} and Lemma \ref{rho_1_is_zero}. So $\widetilde{R}_n(t) \in t^{-1}\mathbb{Q}\llbracket t^{-1} \rrbracket$. Suppose that $\widetilde{R}_n(t) = \sum_{k=1}^{\infty} \widetilde{h}_k t^{-k}$. Taking derivative and comparing it with \eqref{h_k_power_series}, we obtain that
\[ \widetilde{R}_n(t) = \sum_{k=(n+1)s-n|\mathcal{Z}_B|+\theta_{\min}n+1}^{\infty} \frac{h_{k}}{(1-k)t^{k-1}}, \qquad |t|_p \geqslant q_p. \]
The proof of Lemma \ref{lemma_h_k} is complete. 
\end{proof}

\begin{lemma}\label{lemma_p_adic_norm_S_theta}
For any $\theta \in \mathcal{Z}_B$, we have
\[ \limsup_{n \to \infty} |S_{\theta}|_p^{1/n} \leqslant p^{-(l_p+1/(p-1))s}. \]
\end{lemma}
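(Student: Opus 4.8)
The plan is to estimate $|S_\theta|_p$ by exploiting the formula $S_\theta = -\int_{\mathbb{Z}_p} \widetilde{R}_n(t+\theta)\,\mathrm{d}t$ together with the comparison between the Volkenborn integral and the first Bernoulli functional (Lemma \ref{lem:comparison_Volkenborn_Bernoulli}). Since $\widetilde{R}_n(t+\theta)$ is overconvergent by Lemma \ref{lemma_primitive} but does not vanish at $0$, I would first reduce to a function that does. Concretely, set $g(t) := \widetilde{R}_n(t+\theta) - \widetilde{R}_n(\theta)$, so that $g(0)=0$, $g \in C^\dagger(\mathbb{Z}_p,\mathbb{Q}_p)$, and $g'(t) = R_n(t+\theta)$. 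Then $\int_{\mathbb{Z}_p} \widetilde{R}_n(t+\theta)\,\mathrm{d}t = \widetilde{R}_n(\theta) + \int_{\mathbb{Z}_p} g(t)\,\mathrm{d}t = \widetilde{R}_n(\theta) + \mathcal{L}_1(R_n(t+\theta))$, where the last equality is Lemma \ref{lem:comparison_Volkenborn_Bernoulli}. So I am reduced to bounding two quantities: $v_p(\widetilde{R}_n(\theta))$ and $v_p(\mathcal{L}_1(R_n(t+\theta)))$.

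For the second term, write $R_n(t+\theta) = \sum_{k\geqslant 0} u_k t^k$ as in Lemma \ref{lemma_u_k}. By the definition of the first Bernoulli functional, $\mathcal{L}_1(R_n(t+\theta)) = \sum_{k\geqslant 0} u_k \frac{B_{k+1}}{k+1}$. Using the von Staudt--Clausen theorem, the denominator of $B_{k+1}/(k+1)$ is controlled: $v_p(B_{k+1}/(k+1)) \geqslant -1 - \frac{\log(k+1)}{\log p}$ (the $-1$ from the possible factor of $p$ in the denominator of $B_{k+1}$, and the logarithmic term from $v_p(k+1)$). Combining this with the lower bound $v_p(u_k) \geqslant (l_p + \frac{1}{p-1})sn - \frac{\log(n+1)}{\log p}(s-|\mathcal{Z}_B|) + l_p(s+k)$ from Lemma \ref{lemma_u_k}, the term $l_p(s+k)$ grows linearly in $k$ while the penalties $-1 - \frac{\log(k+1)}{\log p}$ grow only logarithmically, so $v_p(u_k B_{k+1}/(k+1)) \to \infty$ and is uniformly bounded below by $(l_p + \frac{1}{p-1})sn - o(n)$. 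Hence $v_p(\mathcal{L}_1(R_n(t+\theta))) \geqslant (l_p + \frac{1}{p-1})sn - o(n)$, which after taking $n$-th roots gives exactly the bound $p^{-(l_p+1/(p-1))s}$ in the limsup. For the first term, $\widetilde{R}_n(\theta)$: since $\theta \in \mathcal{Z}_B$ satisfies $|\theta|_p \geqslant p^{l_p} \geqslant q_p$, I would use the expansion of $\widetilde{R}_n$ at infinity from Lemma \ref{lemma_h_k}, evaluated at $t = \theta$; the leading exponent $(n+1)s - n|\mathcal{Z}_B| + \theta_{\min}n$ in $1/t$ together with $|\theta|_p \geqslant p^{l_p}$ and the lower bounds on $v_p(h_k)$ shows $v_p(\widetilde{R}_n(\theta))$ is also $\geqslant (l_p + \frac{1}{p-1})sn - o(n)$ — in fact comfortably larger, since each factor $1/t$ contributes at least $l_p$ and there are on the order of $(n+1)s$ of them.

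Putting the two estimates together, $v_p(S_\theta) = v_p(\widetilde{R}_n(\theta) + \mathcal{L}_1(R_n(t+\theta))) \geqslant (l_p + \frac{1}{p-1})sn - o(n)$, so $|S_\theta|_p \leqslant p^{-(l_p+1/(p-1))sn + o(n)}$ and therefore $\limsup_{n\to\infty} |S_\theta|_p^{1/n} \leqslant p^{-(l_p + 1/(p-1))s}$, as claimed. The main obstacle I anticipate is making the von Staudt--Clausen bookkeeping fully rigorous: one must be careful that the logarithmic losses from $v_p(k+1)$ in $B_{k+1}/(k+1)$, the logarithmic loss $\frac{\log(n+1)}{\log p}(s-|\mathcal{Z}_B|)$ already present in Lemma \ref{lemma_u_k}, and the factorial denominator $1/(\theta_{\max}n-1)!$ hidden inside the constant of Lemma \ref{lemma_u_k} all combine to only an $o(n)$ correction and do not erode the main linear-in-$n$ term $(l_p+\frac{1}{p-1})sn$; since $s$ is fixed while $n\to\infty$, this is true, but the estimate must be organized so that the fixed-in-$n$ but linear-in-$s$ contribution is isolated cleanly from the genuinely $o(n)$ part.
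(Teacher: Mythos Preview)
Your proposal is correct and follows essentially the same approach as the paper: decompose $S_\theta = -\widetilde{R}_n(\theta) - \mathcal{L}_1(R_n(t+\theta))$ via Lemma~\ref{lem:comparison_Volkenborn_Bernoulli}, then bound the Bernoulli-functional term using Lemma~\ref{lemma_u_k} together with the von Staudt--Clausen estimate $|B_{k+1}/(k+1)|_p \leqslant p(k+1)$, and bound $\widetilde{R}_n(\theta)$ using the expansion at infinity from Lemma~\ref{lemma_h_k}. Your closing worry about the factor $1/(\theta_{\max}n-1)!$ is unfounded: that contribution is already absorbed into the statement of Lemma~\ref{lemma_u_k} (in the proof, $n!/(\theta_{\max}n-1)!$ has non-negative $p$-adic valuation and is simply dropped), so no additional bookkeeping is needed.
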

	
\begin{proof}
By Lemma \ref{lemma_primitive}, we have $\widetilde{R}_n(t+\theta) \in C^\dagger(\mathbb{Z}_p,\mathbb{Q}_p)$ and $\widetilde{R}_n^{\prime}(t+\theta) = R_n(t+\theta)$. Applying Lemma \ref{lem:comparison_Volkenborn_Bernoulli} to the function $f(t) = \widetilde{R}_n(t+\theta) - \widetilde{R}_n(\theta)$, we obtain
\begin{equation}\label{S_theta=L_1+widetildeRn(theta)}
S_{\theta} = -\mathcal{L}_1(R_n(t+\theta)) - \widetilde{R}_n(\theta).
\end{equation}

By Lemma \ref{lemma_u_k}, we have 
\[
 R_n(t+\theta) = \sum_{k=0}^{\infty} u_k t^{k} \in C^\dagger(\mathbb{Z}_p,\mathbb{Q}_p), 
\]
and
\[ v_p(u_k) \geqslant \left( l_p + \frac{1}{p-1} \right)sn - \frac{\log(n+1)}{\log p}(s-|\mathcal{Z}_B|) +l_p(s+k) \]
for any integer $k \geqslant 0$. Therefore,
\[ \mathcal{L}_1(R_n(t+\theta)) = \sum_{k=0}^{\infty} u_k\frac{B_{k+1}}{k+1} \]
and hence
\begin{align}
	\left| \mathcal{L}_1(R_n(t+\theta)) \right|_p &\leqslant \max_{k \geqslant 0} \left| u_k\frac{B_{k+1}}{k+1}  \right|_p \leqslant \max_{k \geqslant 0}  p(k+1)|u_k|_p  \notag\\
	&\leqslant \max_{k \geqslant 0} p^{-(l_p+1/(p-1))sn}(n+1)^{s-|\mathcal{Z}_B|}\frac{p(k+1)}{p^{l_p(s+k)}} \notag\\
	&= p^{-(l_p+1/(p-1))sn}(n+1)^{s-|\mathcal{Z}_B|} p^{1-l_ps}. \label{p_adic_norm_L_1_Rn(t)}
\end{align}

On the other hand, since $|\theta|_p \geqslant p^{l_p} \geqslant q_p$, by Lemma \ref{lemma_h_k}, we have
\[ \widetilde{R}_n(\theta) = \sum_{k=(n+1)s-n|\mathcal{Z}_B|+\theta_{\min}n+1}^{\infty} \frac{h_{k}}{(1-k)\theta^{k-1}}, \]
where $h_k \in \mathbb{Z}$ and
\[ v_p(h_k) \geqslant \frac{sn}{p-1}-\frac{\log(n+1)}{\log p}(s-|\mathcal{Z}_B|) + l_p\max\{0,(n+1)s-k\} \]
for any integer $k \geqslant (n+1)s-n|\mathcal{Z}_B|+\theta_{\min}n+1$. Since $v_p(\theta) \leqslant -l_p$, we have
\begin{align*}
 &v_p\left( \frac{h_{k}}{(1-k)\theta^{k-1}} \right) \\
 &\geqslant v_p(h_k) + l_p(k-1) - \frac{\log(k-1)}{\log p} \\
 &\geqslant \frac{sn}{p-1}-\frac{\log(n+1)}{\log p}(s-|\mathcal{Z}_B|) + l_p\max\{k-1,(n+1)s-1\} - \frac{\log(k-1)}{\log p} \\
 &\geqslant  \frac{sn}{p-1}-\frac{\log(n+1)}{\log p}(s-|\mathcal{Z}_B|) + l_p((n+1)s-1) - \frac{\log((n+1)s-1)}{\log p}
\end{align*}
for any integer $k\geqslant (n+1)s-n|\mathcal{Z}_B|+\theta_{\min}n+1$. Therefore, we have
\begin{equation}\label{p_adic_norm_widetildeRn(theta)}
	|\widetilde{R}_n(\theta)|_p \leqslant p^{-(l_p+1/(p-1))sn}(n+1)^{s-|\mathcal{Z}_B|}((n+1)s-1) p^{-l_p(s-1)}.
\end{equation}
Substituting \eqref{p_adic_norm_L_1_Rn(t)} and \eqref{p_adic_norm_widetildeRn(theta)} into \eqref{S_theta=L_1+widetildeRn(theta)}, we complete the proof of Lemma \ref{lemma_p_adic_norm_S_theta}.
\end{proof}

\section{Archimedean properties}\label{sec:Archimedean}

In the following, we will estimate the Archimedean growth of the coefficients of the linear forms in $1$ and $p$-adic zeta values as $n\to \infty$. Recall our assumption in Definition \ref{definition_Rn(t)} that $s > |\mathcal{Z}_B| > 2$.

\begin{lemma}\label{lemma_r_ik_estimate}
For any $i \in \{1,2,\ldots,s\}$ and any $k \in \{0,1,\ldots,n\}$, we have
\[  |r_{i,k}| \leqslant A_1(B)^nA_2(B)^n 2^{(s-|\mathcal{Z}_B|+2)n} \cdot (2p^{l_p}b_{\max})^{3s}n^{2s}. \]
\end{lemma}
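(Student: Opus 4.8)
The plan is to estimate $r_{i,k}$ by Cauchy's integral formula on a small circle around $-k$. Recall from \eqref{definition_r_ik} that $r_{i,k}=D_{s-i}\bigl(R_n(t)(t+k)^s\bigr)\big|_{t=-k}$. The poles of $R_n$ other than $-k$ lie at the integers of $\{0,1,\dots,n\}\setminus\{k\}$, all at distance $\geqslant 1$ from $-k$, so $g(t):=R_n(t)(t+k)^s$ is holomorphic on the disc $|t+k|\leqslant 1/2$. Hence $r_{i,k}=\frac{1}{2\pi i}\oint_{|t+k|=1/2}g(t)(t+k)^{-(s-i+1)}\,\mathrm{d}t$, which gives
\[ |r_{i,k}|\leqslant 2^{s-i}\max_{|t+k|=1/2}\bigl|R_n(t)(t+k)^s\bigr|. \]
Everything thus reduces to bounding this maximum; I will show it is $\leqslant A_1(B)^nA_2(B)^n\,2^{(s-|\mathcal Z_B|+2)n}(2n+1)^{2s}$ for $n$ large, from which the lemma follows because $2^{s-i}(2n+1)^{2s}\leqslant 2^s(3n)^{2s}=18^sn^{2s}\leqslant(2p^{l_p}b_{\max})^{3s}n^{2s}$, using $18\leqslant(2p^{l_p}b_{\max})^3$ (indeed $p^{l_p}b_{\max}\geqslant p^{l_p}\geqslant 5$).

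First I rewrite $R_n(t)(t+k)^s$ conveniently. Since $(t+k)^s/(t)_{n+1}^s=\prod_{0\leqslant j\leqslant n,\ j\neq k}(t+j)^{-s}$, Definition \ref{definition_Rn(t)} gives, after splitting the $s$ identical factors $\prod_{j\neq k}(t+j)$ of the denominator so that $|\mathcal Z_B|-1$ of them go with the ordinary Pochhammer symbols and the remaining $s-|\mathcal Z_B|+1$ with the $s-|\mathcal Z_B|+1$ copies of $n!$,
\[ R_n(t)(t+k)^s=A_1(B)^nA_2(B)^n\cdot\frac{(t+\theta_{\max})_{\theta_{\max}n-1}}{(\theta_{\max}n-1)!}\cdot\!\!\prod_{\theta\in\mathcal Z_B\setminus\{\theta_{\max}\}}\!\!\frac{(t+\theta)_n}{\prod_{j\neq k}(t+j)}\cdot\biggl(\frac{n!}{\prod_{j\neq k}(t+j)}\biggr)^{\!s-|\mathcal Z_B|+1}. \]

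Now I bound the three types of factors on $|t+k|=1/2$. For each ordinary ratio, pairing $t+\theta+j'$ with $t+j'$ when $j'<k$ and with $t+j'+1$ when $j'\geqslant k$, and using the telescoping identity $\prod_{m=1}^{M}\frac{m+1/2}{m-1/2}=2M+1$, gives $\bigl|(t+\theta)_n/\prod_{j\neq k}(t+j)\bigr|\leqslant(2k+1)(2(n-k)+1)\leqslant(2n+1)^2$. For the Zudilin-type factor, the inequality $\binom{2M}{M}\geqslant 4^M/(2M+1)$ yields $\prod_{m=1}^{M}(m-\tfrac12)\geqslant M!/(2M+1)$, whence $\prod_{j\neq k}|t+j|\geqslant k!(n-k)!/(2n+1)^2$ and $\bigl|n!/\prod_{j\neq k}(t+j)\bigr|\leqslant\binom nk(2n+1)^2\leqslant 2^n(2n+1)^2$. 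For the special factor one isolates the at most two factors $t+\theta_{\max}+j$ of $(t+\theta_{\max})_{\theta_{\max}n-1}$ with $|\theta_{\max}+j-k|<1$ (each of modulus $\leqslant 3/2$), bounds every remaining factor $t+\theta_{\max}+j$ by $\tfrac12+|\theta_{\max}+j-k|$, and compares the resulting product with $(\theta_{\max}n-1)!$ via binomial coefficients; this gives $\bigl|(t+\theta_{\max})_{\theta_{\max}n-1}/(\theta_{\max}n-1)!\bigr|\leqslant C\,n^{2}\,2^{nH(\theta_{\min})}$, where $H(x)=-x\log_2x-(1-x)\log_2(1-x)$ and $C$ is absolute. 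Since $\theta_{\min}=1/(p^{l_p}b_{\max})\leqslant 1/5$, we have $H(\theta_{\min})\leqslant H(1/5)<1$, so for $n$ large this factor is $\leqslant 2^n$.

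Multiplying the three bounds, for $n$ sufficiently large $\max_{|t+k|=1/2}|R_n(t)(t+k)^s|\leqslant A_1(B)^nA_2(B)^n\cdot 2^n\cdot(2n+1)^{2(|\mathcal Z_B|-1)}\cdot\bigl(2^n(2n+1)^2\bigr)^{s-|\mathcal Z_B|+1}=A_1(B)^nA_2(B)^n\,2^{(s-|\mathcal Z_B|+2)n}(2n+1)^{2s}$, which is what was announced. \textbf{The main obstacle} is the special factor: a direct Stirling estimate of $(t+\theta_{\max})_{\theta_{\max}n-1}/(\theta_{\max}n-1)!$ on the circle only produces a bound of the shape $c^n$ with $c>2$, which is too weak for the target exponent $(s-|\mathcal Z_B|+2)n$; one must use both that the Pochhammer symbol has a factor within distance $1$ of $-k$ and the strict inequality $H(\theta_{\min})<1$ (which is uniform in $B$ precisely because $p^{l_p}b_{\max}\geqslant 5$) to recover the needed factor $2^n$.
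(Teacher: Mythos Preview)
Your argument is correct and follows the same overall strategy as the paper --- Cauchy's integral formula on a small circle around $-k$, followed by a factorwise estimate of $R_n(t)(t+k)^s$ --- but with a genuinely different choice of contour radius: you use $|t+k|=\tfrac12$, whereas the paper uses $|z+k|=\tfrac12\theta_{\min}$.

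That choice of radius is the whole point of the paper's computation. With radius $\tfrac12\theta_{\min}$ one has, for every $\theta\in\mathcal Z_B$, both $\theta+\tfrac12\theta_{\min}\leqslant 1$ and $\theta-\tfrac12\theta_{\min}\geqslant 0$; hence each numerator factor $|z+\theta+\iota|$ is bounded above by the integer $\iota-k+1$ (if $\iota\geqslant k$) or $k-\iota$ (if $\iota<k$). This gives the clean bound $|(z+\theta)_n|\leqslant k!(n-k)!$ for each ordinary block and, for the special block, $|(z+\theta_{\max})_{\theta_{\max}n-1}|\leqslant n!/(\theta_{\min}n+1)!$, so that the ratio with $(\theta_{\max}n-1)!$ is \emph{exactly} $\binom{n}{\theta_{\min}n+1}\leqslant 2^n$. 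The ``main obstacle'' you describe therefore never appears in the paper's route; it is an artefact of the radius $1/2$, which forces you through telescoping half-integer products and the entropy inequality $H(\theta_{\min})<1$. Your approach buys independence from $\theta_{\min}$ in the contour, at the cost of messier intermediate estimates and a bound valid only for $n$ large (harmless here, since only the $\limsup$ in Lemma~\ref{lemma_rho_i_estimate} is used downstream).

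One small point: your treatment of the special factor is terse. The cleanest way to justify it with your radius is to observe that for $k\geqslant\theta_{\max}n-1$ every factor satisfies $|t+\theta_{\max}+j|\leqslant k-j$ (using $\theta_{\max}\geqslant\tfrac12$), whence the ratio is $\leqslant\binom{k}{\theta_{\max}n-1}\leqslant\binom{n}{\theta_{\min}n+1}\leqslant 2^{nH(\theta_{\min}+1/n)}$; for smaller $k$ the ratio is bounded by a fixed polynomial in $n$. Either way your conclusion $\leqslant 2^n$ for large $n$ stands.
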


\begin{proof}
By \eqref{definition_r_ik} and Cauchy's integral formula, we have
\[ r_{i,k} = \frac{1}{2\pi \sqrt{-1}} \int_{|z+k|=\frac{1}{2}\theta_{\min}} (z+k)^{i-1}R_n(z) \mathrm{d}z, \]
and hence
\begin{align}
|r_{i,k}| \leqslant &A_1(B)^nA_2(B)^n \frac{n!^{s-|\mathcal{Z}_B|+1}}{(\theta_{\max}n-1)!} \notag\\ 
&\times \sup_{|z+k|=\frac{1}{2}\theta_{\min}}\left| \frac{(z+\theta_{\max})_{\theta_{\max}n-1}\prod_{\theta\in\mathcal{Z}_B\setminus\{\theta_{\max}\}}(z+\theta)_{n}}{(z)_{n+1}^s} \right|. \label{pre_estimate_r_ik}
\end{align}
In the following, we estimate each of the terms in \eqref{pre_estimate_r_ik}.

In this paragraph, the complex number $z$ we consider always lies on the circle $|z+k| = \frac{1}{2}\theta_{\min}$. First, we have
\[ |(z)_{n+1}| \geqslant \prod_{\iota = 0}^{n} \left|  |\iota-k|-\frac{1}{2}\theta_{\min} \right|. \]
If $|\iota-k|>1$, then we have $\left|  |\iota-k|-\theta_{\min}/2 \right| \geqslant |\iota-k| - 1$; otherwise, we have $\left|  |\iota-k|-\theta_{\min}/2 \right| \geqslant \theta_{\min}/2$. We obtain the estimate
\begin{equation}\label{r_ik_estimate_part_1}
	\sup_{|z+k|=\frac{1}{2}\theta_{\min}} \frac{1}{|(z)_{n+1}|} \leqslant (2p^{l_p}b_{\max})^3 \cdot \frac{n^2}{k!(n-k)!}.
\end{equation}
Next, for any $\theta \in \mathcal{Z}_B\setminus\{\theta_{\max}\}$, we have 
\[|(z+\theta)_{n}| \leqslant \prod_{\iota=0}^{n-1} \left( |\iota-k+\theta|+\frac{1}{2}\theta_{\min} \right). \]
If $\iota \geqslant k$, then we have $|\iota-k+\theta|+\theta_{\min}/2 \leqslant \iota - k + 1$; otherwise, we have $|\iota-k+\theta|+\theta_{\min}/2 \leqslant k-\iota$. We obtain the estimate
\begin{equation}\label{r_ik_estimate_part_2}
	\sup_{|z+k|=\frac{1}{2}\theta_{\min}} |(z+\theta)_{n}| \leqslant  k!(n-k)!.
\end{equation}
Finally, we consider $(z+\theta_{\max})_{\theta_{\max}n-1}$. We have
\[ |(z+\theta_{\max})_{\theta_{\max}n-1}| \leqslant \prod_{\iota=0}^{\theta_{\max}n-2} \left( |\iota-k+\theta_{\max}| + \frac{1}{2}\theta_{\min} \right). \]
If $\iota \geqslant k$, then we have $|\iota-k+\theta_{\max}|+\theta_{\min}/2 \leqslant \iota - k + 1$; otherwise, we have $|\iota-k+\theta_{\max}|+\theta_{\min}/2 \leqslant k-\iota$. We obtain the estimate
\[ \sup_{|z+k|=\frac{1}{2}\theta_{\min}}|(z+\theta_{\max})_{\theta_{\max}n-1}| \leqslant 
\begin{cases} 
	k!(\theta_{\max}n-1-k)! &\text{~if~} 0 \leqslant k \leqslant \theta_{\max}n-1, \\
	\frac{k!}{(k-\theta_{\max}n+1)!} &\text{~if~} \theta_{\max}n \leqslant k \leqslant n.
\end{cases} \]
We always have
\begin{equation}\label{r_ik_estimate_part_3}
	\sup_{|z+k|=\frac{1}{2}\theta_{\min}}|(z+\theta_{\max})_{\theta_{\max}n-1}| \leqslant \frac{n!}{(\theta_{\min}n+1)!}.
\end{equation}

Now, substituting \eqref{r_ik_estimate_part_1}, \eqref{r_ik_estimate_part_2} and \eqref{r_ik_estimate_part_3} in \eqref{pre_estimate_r_ik}, we have
\begin{align*}
	|r_{i,k}| &\leqslant A_1(B)^nA_2(B)^n  \binom{n}{k}^{s-|\mathcal{Z}_B|+1} \binom{n}{\theta_{\min}n+1} \cdot (2p^{l_p}b_{\max})^{3s}n^{2s} \\
	&\leqslant A_1(B)^nA_2(B)^n 2^{(s-|\mathcal{Z}_B|+2)n} \cdot (2p^{l_p}b_{\max})^{3s}n^{2s}.
\end{align*}
The proof of Lemma \ref{lemma_r_ik_estimate} is complete.
\end{proof}

\begin{lemma}\label{lemma_rho_i_estimate}
For any $i \in \{2,3,\ldots,s\}$, we have the estimates
\[ \limsup_{n \to \infty} |\rho_i|^{1/n}  \leqslant A_1(B)A_2(B)2^s, \]
and, for any $\theta \in \mathcal{Z}_B$, we have
\[ \limsup_{n \to \infty} |\rho_{0,\theta}|^{1/n}  \leqslant A_1(B)A_2(B)2^s. \]
\end{lemma}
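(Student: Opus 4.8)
The plan is to deduce both estimates directly from the bound on $|r_{i,k}|$ proved in Lemma~\ref{lemma_r_ik_estimate}, since both $\rho_i$ and $\rho_{0,\theta}$ are combinations of the partial-fraction coefficients $r_{i,k}$ with only sub-exponentially many terms (in $n$) and with rational weights that I can bound uniformly. Once the relevant quantities are bounded by $A_1(B)^nA_2(B)^n\,2^{(s-|\mathcal{Z}_B|+2)n}$ times a factor that is merely polynomial in $n$ (and a constant independent of $n$), extracting $n$-th roots and letting $n\to\infty$ will give the claim up to replacing $2^{s-|\mathcal{Z}_B|+2}$ by $2^s$.

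First I would treat $\rho_i$. By \eqref{definition_rho_i} we have $\rho_i=\sum_{k=0}^{n}r_{i,k}$, so the triangle inequality gives $|\rho_i|\leqslant (n+1)\max_{0\leqslant k\leqslant n}|r_{i,k}|$. Substituting the bound of Lemma~\ref{lemma_r_ik_estimate} yields
\[ |\rho_i|\leqslant (n+1)\,A_1(B)^nA_2(B)^n\,2^{(s-|\mathcal{Z}_B|+2)n}\,(2p^{l_p}b_{\max})^{3s}n^{2s}. \]
Taking $n$-th roots, the factors $(n+1)^{1/n}$, $n^{2s/n}$ and $(2p^{l_p}b_{\max})^{3s/n}$ all tend to $1$ as $n\to\infty$ (recall that $s$, $B$, and hence $b_{\max}$, are fixed), so $\limsup_{n\to\infty}|\rho_i|^{1/n}\leqslant A_1(B)A_2(B)\,2^{s-|\mathcal{Z}_B|+2}$.

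Next I would treat $\rho_{0,\theta}$. By \eqref{definition_rho_0theta}, $\rho_{0,\theta}=-\sum_{i=1}^{s}\sum_{k=1}^{n}\sum_{\nu=0}^{k-1}\frac{r_{i,k}}{(\nu+\theta)^{i}}$, which is a sum of at most $sn^2$ terms. The one point requiring care is a uniform lower bound on the denominators: since $\theta\in\mathcal{Z}_B$ satisfies $\theta\geqslant\theta_{\min}=1/(p^{l_p}b_{\max})$ and $\nu\geqslant 0$, we have $\nu+\theta\geqslant\theta_{\min}$, hence $\bigl|(\nu+\theta)^{-i}\bigr|\leqslant (p^{l_p}b_{\max})^{i}\leqslant (p^{l_p}b_{\max})^{s}$ for all $1\leqslant i\leqslant s$. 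Therefore $|\rho_{0,\theta}|\leqslant sn^2(p^{l_p}b_{\max})^{s}\max_{1\leqslant i\leqslant s,\,0\leqslant k\leqslant n}|r_{i,k}|$, and applying Lemma~\ref{lemma_r_ik_estimate} and taking $n$-th roots as before gives $\limsup_{n\to\infty}|\rho_{0,\theta}|^{1/n}\leqslant A_1(B)A_2(B)\,2^{s-|\mathcal{Z}_B|+2}$.

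Finally, to get the stated clean bound I would note that $|\mathcal{Z}_B|\geqslant 2$: indeed $\theta_{\min}=1/(p^{l_p}b_{\max})$ and $\theta_{\max}=(p^{l_p}b_{\max}-1)/(p^{l_p}b_{\max})$ are distinct elements of $\mathcal{Z}_B$ because $p^{l_p}b_{\max}\geqslant 2$, so $2^{s-|\mathcal{Z}_B|+2}\leqslant 2^{s}$ and the proof is complete. There is no serious obstacle here; the only things to watch are the uniform lower bound $\nu+\theta\geqslant\theta_{\min}$ on the denominators in $\rho_{0,\theta}$ and the fact that the number of summands in each of $\rho_i$ and $\rho_{0,\theta}$ grows only polynomially in $n$, so that it disappears upon taking $n$-th roots.
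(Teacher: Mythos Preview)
Your proof is correct and follows essentially the same approach as the paper: bound $|\rho_i|$ and $|\rho_{0,\theta}|$ by a polynomial-in-$n$ factor times $\max_{i,k}|r_{i,k}|$, invoke Lemma~\ref{lemma_r_ik_estimate}, and use $|\mathcal{Z}_B|\geqslant 2$ to absorb the $2^{s-|\mathcal{Z}_B|+2}$ into $2^s$. The only cosmetic difference is that the paper bounds $\sum_{\nu=0}^{n-1}(\nu+\theta)^{-1}$ by $p^{l_p}b_{\max}+\log(p^{l_p}b_{\max}n)$ rather than using your cruder bound $(\nu+\theta)^{-i}\leqslant (p^{l_p}b_{\max})^s$, but since both bounds are sub-exponential in $n$ this makes no difference.
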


\begin{proof}
By \eqref{definition_rho_i} and \eqref{definition_rho_0theta}, we have
\[ |\rho_{i}| \leqslant \sum_{k=0}^{n} |r_{i,k}|, \]
and
\begin{align*}
 |\rho_{0,\theta}| &\leqslant \sum_{i=1}^{s}\sum_{k=1}^{n}\sum_{\nu=0}^{k-1} \frac{|r_{i,k}|}{(\nu+\theta)^{i}} \\
 & \leqslant  sn^2 \cdot \frac{1}{\theta_{\min}^s} \cdot \max_{i,k}|r_{i,k}| \\
 & = sn^2 \cdot (p^{l_p}b_{\max})^s \cdot \max_{i,k}|r_{i,k}|.
\end{align*}
Now, Lemma \ref{lemma_rho_i_estimate} follows immediately from Lemma \ref{lemma_r_ik_estimate} since $|\mathcal{Z}_B| > 2$.
\end{proof}

The previous lemma allows us to estimate the growth of the coefficients $\rho_i$ and $\rho_{0,\theta}$ of our linear forms as $n\to \infty$ in terms of $A_1(B)$, $A_2(B)$ and $s$. The next lemma discusses the dependence of $A_1(B)$ and $A_2(B)$ on $B$ as $B\to + \infty$. 

\begin{lemma}\label{lemma_estimate_A_1_A_2}
	As $B \to +\infty$, we have
	\[ A_1(B) = \exp\left( \left(\frac{a_p}{2}+o(1)\right)B^2\log B \right), \]
 where the constant $a_p$ is defined by \eqref{def_a_p}.
	On the other hand, for any $B$ larger than some constant depending at most on $p$, we have
	\[ A_2(B) \leqslant \exp\left( 10B^2(\log\log B)^2 \right). \]
\end{lemma}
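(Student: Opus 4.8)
The plan is to pass to logarithms and reduce both estimates to the identity $\sum_{b\in\Psi_B}\varphi(p^{l_p}b) = |\mathcal{Z}_B| = (a_p/2 + o(1))B^2$ from Lemma~\ref{lemma_sizes_of_sets}, together with two classical facts from elementary number theory: the lower bound $\varphi(m)\gg m/\log\log m$ and Mertens' estimate $\sum_{q\leqslant y}(\log q)/q = \log y + O(1)$ (sums over primes $q$). The first thing I would record is a uniform size bound on $\Psi_B$: if $b\in\Psi_B$ then $\varphi(p^{l_p}b)\leqslant B$, so $\varphi(m)\gg m/\log\log m$ forces $p^{l_p}b = O(B\log\log B)$, and hence
\[
\log(p^{l_p}b) = (1+o(1))\log B \qquad\text{and}\qquad \log\log(p^{l_p}b) = \log\log B + O(1),
\]
uniformly for $b\in\Psi_B$ as $B\to\infty$.

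For $A_1(B)$ we have $\log A_1(B) = \sum_{b\in\Psi_B}\varphi(p^{l_p}b)\log(p^{l_p}b)$. The upper bound $\log(p^{l_p}b)\leqslant(1+o(1))\log B$ gives $\log A_1(B)\leqslant(1+o(1))(\log B)|\mathcal{Z}_B| = (a_p/2 + o(1))B^2\log B$. For the matching lower bound, fix $\delta>0$ and discard those $b\in\Psi_B$ with $p^{l_p}b < B^{1-\delta}$: they contribute at most $\sum_{m<B^{1-\delta}}\varphi(m)\log m \ll B^{2-2\delta}\log B = o(B^2\log B)$ to $\log A_1(B)$ and at most $\sum_{m<B^{1-\delta}}\varphi(m) = o(B^2)$ to $|\mathcal{Z}_B|$, so the remaining $b$ still carry $(a_p/2 + o(1))B^2$ of the sum $\sum_{b\in\Psi_B}\varphi(p^{l_p}b)$ while each satisfies $\log(p^{l_p}b)\geqslant(1-\delta)\log B$; since all summands are nonnegative this yields $\log A_1(B)\geqslant(1-\delta)(a_p/2 + o(1))B^2\log B$. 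Letting $\delta\to0$ gives $\log A_1(B) = (a_p/2 + o(1))B^2\log B$, which is the asserted formula for $A_1(B)$.

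For $A_2(B)$ we have $\log A_2(B) = \sum_{b\in\Psi_B}\varphi(p^{l_p}b)\sum_{q\mid p^{l_p}b}\frac{\log q}{q-1}$, and it suffices to bound the inner sum by $O(\log\log B)$ uniformly for $b\in\Psi_B$. Since $x\mapsto(\log x)/(x-1)$ is decreasing on $[2,\infty)$, if $p^{l_p}b$ has $k$ distinct prime divisors then $\sum_{q\mid p^{l_p}b}\frac{\log q}{q-1}\leqslant\sum_{j=1}^{k}\frac{\log p_j}{p_j-1}$ with $p_j$ the $j$-th prime; from $\sum_{j\leqslant k}\log p_j = \log(\operatorname{rad}(p^{l_p}b))\leqslant\log(p^{l_p}b)$ and the prime number theorem one gets $p_k\ll\log(p^{l_p}b)$, whence Mertens' estimate gives $\sum_{q\mid p^{l_p}b}\frac{\log q}{q-1}\leqslant\log p_k + O(1)\leqslant\log\log(p^{l_p}b) + O(1)\leqslant 2\log\log B$ for $B$ large. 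Therefore $\log A_2(B)\leqslant 2\log\log B\cdot|\mathcal{Z}_B| = (a_p + o(1))B^2\log\log B\leqslant 10B^2(\log\log B)^2$ once $B$ exceeds a constant depending only on $p$.

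The only step that is not routine bookkeeping is the uniform size bound $\log(p^{l_p}b) = (1+o(1))\log B$ for $b\in\Psi_B$ — equivalently, that $\Psi_B$ contains no $b$ with $p^{l_p}b$ much larger than $B$ — which is exactly where the classical inequality $\varphi(m)\gg m/\log\log m$ enters; once that is in place, everything reduces to the asymptotics of $|\mathcal{Z}_B|$ already established in Lemma~\ref{lemma_sizes_of_sets}.
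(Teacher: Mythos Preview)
Your proof is correct, though your approaches differ from the paper's in two places.

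\textbf{Comparison.} For the lower bound on $\log A_1(B)$, the paper uses the trivial inequality $\log(p^{l_p}b)\geqslant\log\varphi(p^{l_p}b)$ and then evaluates $\sum_{b\in\Psi_B}\varphi(p^{l_p}b)\log\varphi(p^{l_p}b)$ by partial summation against the counting function $f(x)=\#\{b:\varphi(p^{l_p}b)\leqslant x\}\sim a_p x$. Your truncation at $B^{1-\delta}$ is an alternative that avoids Stieltjes integration and is arguably more transparent; both methods work equally well here. For $A_2(B)$, the paper swaps the order of summation and replaces $\varphi(p^{l_p}b)$ by $p^{l_p}b$ (costing a factor $\log\log B$), then bounds the resulting double sum over primes $q$ and multiples of $q$ directly to obtain $8B^2(\log\log B)^2$. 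You instead bound the inner sum $\sum_{q\mid m}\frac{\log q}{q-1}$ pointwise by $\log\log m + O(1)$ via the comparison with the first $k$ primes; this is cleaner and actually yields the sharper order $B^2\log\log B$ before the crude final inequality. Either route suffices for the application.

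\textbf{A minor exposition point.} Your displayed claim $\log(p^{l_p}b)=(1+o(1))\log B$ ``uniformly for $b\in\Psi_B$'' is not true as stated: for instance $b=1\in\Psi_B$ gives a bounded left-hand side. What you actually need and use is only the upper bound $\log(p^{l_p}b)\leqslant(1+o(1))\log B$ (and likewise $\log\log(p^{l_p}b)\leqslant\log\log B+O(1)$), which does hold uniformly. This does not affect the argument, but you should rephrase that sentence as an inequality.
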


\begin{proof}
	We start by 
	\[\log A_1(B) = \sum_{b \in \Psi_B} \varphi\left(p^{l_p}b\right) \log\left(p^{l_p}b\right). \] Define the function \[ f(x) := \#\left\{ b \in \mathbb{N} ~\mid~ \varphi(p^{l_p}b) \leqslant x \right\}, \quad x \in (0,+\infty). \]
	Firstly, we have
	\begin{align*}
		\log A_1(B) &\geqslant  \sum_{b \in \Psi_B} \varphi\left(p^{l_p}b\right) \log\varphi\left(p^{l_p}b\right) \\
		&= \int_{1^{-}}^{B} x\log x \mathrm{d} f(x) \\
		&= f(B)B\log B - \int_{1^{-}}^{B} f(x)(\log x + 1)\mathrm{d}x.
	\end{align*}
By Lemma \ref{lemma_inverse_totient}, we have $f(x) \sim a_p x$ as $x \to +\infty$. Therefore, we obtain that 
\begin{equation}\label{A_1_geq}
	\log A_1(B) \geqslant \left(\frac{a_p}{2}+o(1)\right)B^2\log B. 
\end{equation} 
On the other hand, it is well known (see \cite[Theorem 2.9]{MV2006}) that
\[ \varphi(m) \geqslant (e^{-\gamma}+o_{m \to +\infty}(1))\frac{m}{\log\log m}, \]
where $\gamma = 0.577\ldots$ is Euler's constant. For any $b \in \Psi_{B}$, since $\varphi\left(p^{l_p}b\right) \leqslant B$, we derive that
\begin{equation}\label{upper_bound_for_elements_in_Psi_B}
	b \leqslant \frac{e^{\gamma}+o(1)}{p^{l_p}}B\log\log B.
\end{equation}
Therefore, $\log\left(p^{l_p}b\right) \leqslant (1+o(1))\log B$ holds uniformly for $b \in \Psi_B$. We have 
\begin{align}
	\log A_1(B) &\leqslant (1+o(1))\log B \sum_{b \in \Psi_B} \varphi\left(p^{l_p}b\right) \notag\\
	&= (1+o(1))\log B \cdot |\mathcal{Z}_B| \notag\\
	&= \left(\frac{a_p}{2}+o(1)\right)B^2\log B \label{A_1_leq}
\end{align}
by \eqref{size_of_Z_B}. Combining \eqref{A_1_geq} and \eqref{A_1_leq}, we obtain the estimate for $A_1(B)$.

Now we consider $A_2(B)$. By \eqref{upper_bound_for_elements_in_Psi_B} and $e^\gamma = 1.78\ldots < 2$, when $B$ is larger than some absolute constant, we have $b \leqslant 2(B\log\log B)/p^{l_p}$ for every $b \in \Psi_B$.  Therefore, using our convention that $q$ denotes always a prime number, we get
\begin{align}
	\log A_2(B) &= \sum_{b \in \Psi_B} \varphi\left(p^{l_p}b\right) \sum_{q \mid p^{l_p}b} \frac{\log q}{q-1} \notag\\
	&= \frac{\log p}{p-1}\sum_{b \in \Psi_B} \varphi\left(p^{l_p}b\right) + \sum_{b \in \Psi_B} \varphi\left(p^{l_p}b\right) \sum_{q \mid b \atop q \neq p} \frac{\log q}{q-1} \notag\\
	&\leqslant \frac{\log p}{p-1}\sum_{b \in \Psi_B} \varphi\left(p^{l_p}b\right) + \sum_{b \leqslant 2(B\log\log B)/p^{l_p}} p^{l_p}b \sum_{q \mid b} \frac{\log q}{q-1} \notag\\
	&= \frac{\log p}{p-1}|\mathcal{Z}_B| + p^{l_p}\sum_{q \leqslant 2(B\log\log B)/p^{l_p}} \frac{\log q}{q-1} \sum_{b \leqslant 2(B\log\log B)/p^{l_p} \atop q \mid b} b. \label{A_2_leq}
\end{align}
Note that
\begin{align}
	&p^{l_p}\sum_{q \leqslant 2(B\log\log B)/p^{l_p}} \frac{\log q}{q-1} \sum_{b \leqslant 2(B\log\log B)/p^{l_p} \atop q \mid b} b \notag\\
	\leqslant& p^{l_p}\sum_{q \leqslant 2(B\log\log B)/p^{l_p}} \frac{\log q}{q-1} \cdot \frac{4B^2(\log\log B)^2}{p^{2l_p}q} \notag\\
	\leqslant& 4B^2(\log\log B)^2 \sum_{q} \frac{\log q}{q(q-1)} \notag\\
	\leqslant& 8B^2(\log\log B)^2. \label{A_2_main_part_leq}
\end{align}
The last inequality follows e.g.\ from the estimate
\[
\sum_{q} \frac{\log q}{q(q-1)}\leqslant \sum_{k=2}^\infty \frac{\log k}{k(k-1)}\leqslant \int_1^\infty \frac{\log x}{x(x-1)}\mathrm{d}x=\int_0^1 \frac{-\log x}{1-x}\mathrm{d}x =\zeta(2)< 2.
\]
Substituting \eqref{size_of_Z_B} and \eqref{A_2_main_part_leq} into \eqref{A_2_leq}, we obtain that
\[ \log A_2(B) \leqslant \frac{\log p}{p-1}\left(\frac{a_p}{2} + o(1)\right)B^2 + 8B^2(\log\log B)^2 \leqslant 10B^2(\log\log B)^2 \]
when $B$ is larger than some constant depending at most on $p$. The proof of Lemma \ref{lemma_estimate_A_1_A_2} is complete.
\end{proof}

\section{Elimination procedure and proof of the main theorem}\label{sec:MainThm}
In this section, we will apply the elimination technique to the linear forms constructed in section \ref{sec:LinearForms}. 

\begin{proposition}\label{prop:elimination}
    If $A_1(B)A_2(B)2^se^s<p^{\left(l_p+\frac{1}{p-1}\right)s}$ and $s> |\mathcal{Z}_B|>2$ then at least $|\Psi_B|$ of the numbers $\zeta_p(3),\zeta_p(5),\dots,\zeta_p(s)$ are irrational.
\end{proposition}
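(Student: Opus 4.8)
The plan is to argue by contradiction, combining the elimination technique of \cite{FSZ2019,LY2020} with the new irrationality criterion Lemma \ref{lem:irrationalityCrit}. Assume that fewer than $|\Psi_B|$ of the numbers $\zeta_p(3),\zeta_p(5),\dots,\zeta_p(s)$ are irrational, and let $\mathcal{I}$ denote the ($n$-independent) set of odd $i\in[3,s]$ with $\zeta_p(i)\notin\mathbb{Q}$, so that $|\mathcal{I}|\leqslant|\Psi_B|-1$. Recall from Lemma \ref{lemma_linear_form_S_b} that for each $b\in\Psi_B$ and each $n\in I$ we have
\[ S_b=\rho_{0,b}+\sum_{3\leqslant i\leqslant s,\ i\text{ odd}}\rho_i\,(p^{l_p}b)^i\,\zeta_p(i), \]
where, crucially, the coefficients $\rho_i$ do not depend on $b$.

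First I would produce the elimination coefficients. The $|\Psi_B|\times|\mathcal{I}|$ matrix $\big((p^{l_p}b)^i\big)_{b\in\Psi_B,\ i\in\mathcal{I}}$ has a non-trivial left kernel, since the span of its columns has dimension at most $|\mathcal{I}|<|\Psi_B|$. Moreover this left kernel is not contained in the coordinate hyperplane $\{\lambda_{b_{\max}}=0\}$: if it were, then the standard basis vector $e_{b_{\max}}$ would lie in the column span, which would provide a non-zero polynomial $P(x)=\sum_{i\in\mathcal{I}}c_i x^i$ with at most $|\mathcal{I}|$ monomials vanishing at the $|\Psi_B|-1\geqslant|\mathcal{I}|$ distinct positive reals $p^{l_p}b$ $(b\in\Psi_B\setminus\{b_{\max}\})$ but with $P(p^{l_p}b_{\max})=1$, which is impossible because a non-zero real polynomial with $|\mathcal{I}|$ monomials has at most $|\mathcal{I}|-1$ positive real zeros by Descartes' rule of signs. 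Hence I may fix once and for all integers $(\lambda_b)_{b\in\Psi_B}$, independent of $n$, with $\lambda_{b_{\max}}\neq 0$ and $\sum_{b\in\Psi_B}\lambda_b(p^{l_p}b)^i=0$ for every $i\in\mathcal{I}$.

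Set $L_n:=\sum_{b\in\Psi_B}\lambda_b S_b$ for $n\in I$. By the choice of the $\lambda_b$, all terms $\zeta_p(i)$ with $i\in\mathcal{I}$ are eliminated and the surviving $\zeta_p(i)$ are rational, so $L_n$ is a rational linear combination of $1$ and the rational numbers $\zeta_p(i)$ with $i\in[3,s]$ odd and $i\notin\mathcal{I}$. Multiplying by $l(n)^sd_n^s$ — integral by $d_n^{s-i}\rho_i\in\mathbb{Z}$ (Lemma \ref{lemma_rho_i}), $d_n^s\rho_{0,b}\in\mathbb{Z}$ for $b\neq b_{\max}$ and $l(n)^sd_n^s\rho_{0,b_{\max}}\in\mathbb{Z}$ (Lemma \ref{lemma_integrality_rho_0_b}) — yields an integral linear form $\widetilde{L}_n=\ell_{0,n}X_0+\sum_{i\notin\mathcal{I}}\ell_{i,n}X_i$ with $\widetilde{L}_n\big(1,(\zeta_p(i))_{i\notin\mathcal{I}}\big)=l(n)^sd_n^sL_n$. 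I then verify the hypotheses of Lemma \ref{lem:irrationalityCrit} for $\xi_0=1$ and $\xi_i=\zeta_p(i)$ $(i\notin\mathcal{I})$, along the unbounded set $I$ with $n$ sufficiently large. For the convergence condition: $|l(n)^sd_n^sL_n|_p\leqslant\max_b|S_b|_p$ and $\limsup_n|S_b|_p^{1/n}\leqslant p^{-(l_p+1/(p-1))s}$ by Lemma \ref{lemma_p_adic_norm_S_theta} together with Lemma \ref{lemma_a_simple_property_of_Z_B}, whereas Lemma \ref{lemma_rho_i_estimate} and $d_n^{1/n}\to e$ give $\limsup_n\big(\max_i|\ell_{i,n}|\big)^{1/n}\leqslant A_1(B)A_2(B)2^se^s$, so the assumed inequality $A_1(B)A_2(B)2^se^s<p^{(l_p+1/(p-1))s}$ forces $\max_i|\ell_{i,n}|\cdot|l(n)^sd_n^sL_n|_p\to 0$. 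The condition $l(n)=n+1\to\infty$ is clear. For the valuation condition: since $l(n)$ is a prime with $l(n)>p^{l_p}b_{\max}$ and $l(n)\nmid d_n$, we have $v_{l(n)}(\rho_i)\geqslant 0$, $v_{l(n)}(\rho_{0,b})\geqslant 0$ for $b\neq b_{\max}$, and $v_{l(n)}(\lambda_{b_{\max}})=0$ for $n$ large; hence $v_{l(n)}(\ell_{i,n})\geqslant s$ for $i\notin\mathcal{I}$, whereas by Lemma \ref{lemma_integrality_rho_0_b} and strict dominance $v_{l(n)}(\ell_{0,n})=s+v_{l(n)}(\rho_{0,b_{\max}})\leqslant|\mathcal{Z}_B|-1<s$, using $-s+|\mathcal{Z}_B|+1<0$. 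In particular $\ell_{0,n}$ is a non-zero integer, so $\widetilde{L}_n$ is non-trivial and $v_{l(n)}(\ell_{0,n})<v_{l(n)}(\ell_{i,n})$ for all $i\notin\mathcal{I}$. Lemma \ref{lem:irrationalityCrit} now yields that one of $1$ and the $\zeta_p(i)$ $(i\notin\mathcal{I})$ is irrational, contradicting the definition of $\mathcal{I}$. Therefore at least $|\Psi_B|$ of $\zeta_p(3),\zeta_p(5),\dots,\zeta_p(s)$ are irrational.

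The step I expect to be the crux is the construction of the elimination vector with $\lambda_{b_{\max}}\neq 0$: ordinary elimination only provides some non-trivial vector in the left kernel, but the argument genuinely needs the coefficient of the distinguished form $S_{b_{\max}}$ — the only one whose constant term $\rho_{0,b_{\max}}$ carries the crucial negative $l(n)$-adic valuation supplied by Lemma \ref{lemma_rho_0theta_min} — to survive, which is exactly what the fewnomial argument above secures. The remaining effort is the bookkeeping of $l(n)$-adic valuations that makes precise the strict inequality in the valuation condition of Lemma \ref{lem:irrationalityCrit}.
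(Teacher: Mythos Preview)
Your proof is correct and follows essentially the same route as the paper: argue by contradiction, eliminate the irrational zeta values via a suitable integer combination of the $S_b$, and then apply Lemma~\ref{lem:irrationalityCrit} using the $l(n)$-adic valuation gap supplied by Lemma~\ref{lemma_integrality_rho_0_b}. The only noteworthy difference is in how you secure $\lambda_{b_{\max}}\neq 0$: the paper enlarges the column set to $\{1\}\cup J$ with $|J|=|\Psi_B|-1$, obtains a square generalized Vandermonde matrix whose invertibility is quoted from \cite[Lemma~4]{FSZ2019}, and then observes via Cramer's rule that every $w_b$ is itself a generalized Vandermonde minor and hence non-zero; you instead keep the rectangular matrix and use Descartes' rule of signs directly to show the left kernel cannot lie entirely in $\{\lambda_{b_{\max}}=0\}$. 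Both arguments are standard and equivalent in strength (the cited lemma is itself proved by a sign-change count), so this is a cosmetic variation rather than a different approach.
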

\begin{proof}
    Set $I_s:=\{3,5,\dots,s\}$. We argue by contradiction. Suppose there is a set $J\subseteq I_s$ of cardinality $|J|=|\Psi_B|-1$ such that $\zeta_p(i)\in \mathbb{Q}$ for all $i\in I_s\setminus J$. The generalized Vandermonde matrix
    \[
        ((p^{l_p}b)^j)_{b\in \Psi_B, j\in \{1\}\cup J}
    \]
    is invertible (see \cite[Lemma 4]{FSZ2019}), so we can find integers $w_b\in \mathbb{Z}$ for $b\in \Psi_B$ such that
    \begin{align*}
        \sum_{b\in \Psi_B} w_b (p^{l_p}b)^j&=0,\quad (j \in J)\\
        \sum_{b\in \Psi_B} w_b p^{l_p}b &= \det ((p^{l_p}b)^j)_{b\in \Psi_B, j\in \{1\}\cup J}.
    \end{align*}
    For $n\in I$ and $b\in \Psi_B$, we have defined in section \ref{sec:LinearForms} the linear forms in $1$ and $p$-adic zeta values
    \[ S_b = \rho_{0,b} + \sum_{3 \leqslant i \leqslant s \atop i \text{~odd}} \rho_i \cdot (p^{l_p}b)^{i} \zeta_p(i),  \quad (b \in \Psi_B)\]
    see Definition \ref{definition_linear_form_S_b} and Lemma \ref{lemma_linear_form_S_b}. Note that the coefficients $\rho_{0,b}$ and $\rho_i$ for $i\in I_s$ depend implicitly on $n$.
    For $n\in I$, we define the linear form
    \[
        L_n(X_0,(X_i)_{i\in I_s\setminus J}):=l_{0,n}X_0+\sum_{i\in I_s\setminus J}l_{i,n}X_i
    \]
    where
    \begin{align*}
         l_{0,n}:&=\ell(n)^sd_n^s\sum_{b\in \Psi_B}w_b \rho_{0,b},\\
         l_{i,n}:&=\ell(n)^sd_n^s\rho_{i}\sum_{b\in \Psi_B}w_b(p^{l_p}\cdot b)^i, \quad (i\in I_s\setminus J).
    \end{align*}
    By Lemma \ref{lemma_integrality_rho_0_b} and Lemma \ref{lemma_rho_i}, the coefficients $l_{0,n}$ and $l_{i,n}$ for $i\in I_s\setminus J$ are integral. Note that the linear forms $L_n$ are constructed in such a way that
    \[
        L_n(1,(\zeta_p(i))_{i\in I_s\setminus J})=\ell(n)^sd_n^s\sum_{b\in \Psi_B} w_b \cdot S_b.
    \]
    Hence, Lemma \ref{lemma_p_adic_norm_S_theta} allows us to estimate the $p$-adic norm of our linear forms:
    \begin{equation}\label{eq:Sn_padic}
        \limsup_{n} |L_n(1,(\zeta_p(i))_{i\in I_s\setminus J})|_p^{1/n}\leqslant p^{-\left(l_p+\frac{1}{p-1}\right)s}.
    \end{equation}
    On the other hand, we have for $i\in (I_s\setminus J)\cup \{0\}$ the following upper bound for the Archimedean growth of the coefficients
    \begin{equation}\label{eq:lni_Archimedean}
        \limsup_{n} |l_{i,n}|^{1/n}\leqslant A_1(B)A_2(B)2^se^s
    \end{equation}
    by Lemma \ref{lemma_rho_i_estimate} and $\lim_n |d_n|^{1/n}=e$. Note that the coefficient $w_{b_{\text{max}}}\neq 0$ of $\rho_{0,b_{\text{max}}}$ is non-zero. In fact, every $w_b$ is non-zero because it is itself a generalized Vandermonde matrix by Cramer's rule. By Lemma \ref{lemma_integrality_rho_0_b}, the term corresponding to $b_{\max}$ in 
    \[
    l_{0,n}=\ell(n)^sd_n^s\sum_{b\in \Psi_B}w_b \rho_{0,b}
    \]
    dominates the other terms $\ell(n)$-adically and thereby controls the $\ell(n)$-adic valuation of $l_{0,n}$. More precisely, we have by Lemma \ref{lemma_integrality_rho_0_b} for $n$ sufficiently large
    \[
        v_{\ell(n)}(l_{0,n})=v_{\ell(n)}(\ell(n)^sd_n^sw_{b_{\max}}\rho_{0,b_{\max}})=s+v_{\ell(n)}(\rho_{0,b_{\max}})\leqslant |\mathcal{Z}_B| -1.
    \]
    On the other hand, Lemma \ref{lemma_rho_i} allows us to estimate the $\ell(n)$-adic valuation of $l_{i,n}$ for $i\in I_s\setminus J$
    \[
        v_{\ell(n)}(l_{i,n})\geqslant s.
    \]
    Thus, using $s>|\mathcal{Z}_B|$, we get for $n \in I$ sufficiently large:
    \begin{equation}\label{eq:ln0_dominates}
        v_{\ell(n)}(l_{0,n})< v_{\ell(n)}(l_{i,n}) \text{ for all }i\in I_s\setminus J.
    \end{equation}
    We can now apply Lemma \ref{lem:irrationalityCrit}. Note that the condition \ref{lem:irrationalityCrit:i} of Lemma \ref{lem:irrationalityCrit} is satisfied by \eqref{eq:lni_Archimedean} and \eqref{eq:Sn_padic}, while condition \ref{lem:irrationalityCrit:ii}  follows from \eqref{eq:ln0_dominates} and condition \ref{lem:irrationalityCrit:iii} holds because $I$ is unbounded. We deduce that at least one of the remaining zeta values $\zeta_p(i)$ for $i\in I_s\setminus J$ is irrational. This contradicts the choice of $J$.
\end{proof}

\bigskip

\begin{proof}[Proof of the main theorem (Theorem \ref{thmA})]
For $\varepsilon>0$, let us define
\[
    B(s):=\frac{c_p-\varepsilon/2}{a_p}\sqrt{\frac{s}{\log s}}.
\]
Our goal is to check that the assumptions of Proposition \ref{prop:elimination} are satisfied for this choice of $B(s)$ when $s$ is sufficiently large. Lemma \ref{lemma_sizes_of_sets} shows
\[
    2 < |\mathcal{Z}_{B(s)}| = \left( \frac{a_p}{2} + o(1)\right)B(s)^2<s
\]
for $s$ sufficiently large. By Lemma \ref{lemma_estimate_A_1_A_2}, we have
\begin{equation}\label{eq:proof_main_thm_A1A2}
    A_1(B(s))\cdot A_2(B(s))=\exp\left( \left( \frac{1}{4}\frac{(c_p-\varepsilon/2)^2}{a_p} +o(1) \right)s \right)\quad \text{ as } s\to \infty.
\end{equation}
From the definition of $c_p$ in Theorem \ref{thmA} and the definition of $a_p$ in \eqref{def_a_p}, we get
\[
    c_p=\sqrt{4a_p \left(\left(l_p+\frac{1}{p-1}\right)\log p -1-\log 2\right)},
\]
and hence
\[
    \frac{1}{4}\frac{c_p^2}{a_p} +\log2+1=\left( l_p +\frac{1}{p-1}\right)\log p,
\]
so we deduce from \eqref{eq:proof_main_thm_A1A2} that
\[
    A_1(B(s))\cdot A_2(B(s))2^s e^s <p^{\left(l_p+\frac{1}{p-1}\right)s}.
\]
Now Proposition \ref{prop:elimination} implies that at least $|\Psi_{B(s)}|$ of the numbers $\zeta_p(3),\zeta_p(5),\dots,\zeta_p(s)$ are irrational for $s$ sufficiently large. On the other hand, Lemma \ref{lemma_sizes_of_sets} together with the definition of $B(s)$ shows that
\[
    |\Psi_{B(s)}|=(1+o(1)) (c_p-\varepsilon/2)\sqrt{\frac{s}{\log s}}.
\]
So we deduce that for sufficiently large odd integers $s$ there are at least $(c_p-\varepsilon)\sqrt{\frac{s}{\log s}}$ irrational numbers among $\zeta_p(3),\zeta_p(5),\dots,\zeta_p(s)$. This proves the main theorem.
\end{proof}

\bigskip



\vspace*{3mm}
\begin{flushright}
\begin{minipage}{148mm}\sc\footnotesize
L.\,L., Beijing International Center for Mathematical Research, Peking University, Beijing, China\\
{\it E--mail address}: {\tt lilaimath@gmail.com} \vspace*{3mm}
\end{minipage}
\end{flushright}

\begin{flushright}
\begin{minipage}{148mm}\sc\footnotesize
J.\,S., Department of Mathematics, University of Duisburg-Essen, Essen, Germany \\
{\it E--mail address}: {\tt johannes.sprang@uni-due.de } \vspace*{3mm}
\end{minipage}
\end{flushright}


\begin{thebibliography}{99}

\bibitem{Ap79} R. Ap{\'e}ry, \textit{Irrationalit{\'e} de $\zeta(2)$ et $\zeta(3)$}, in \textit{Journ{\'e}es Arithm{\'e}tiques (Luminy, 1978)}, Ast{\'e}risque, vol. 61 (Soci{\'e}t{\'e} Math{\'e}matique de France, Paris, 1979), 11--13.

\bibitem{Bat1972} P. T. Bateman, \textit{The distribution of values of the Euler function}, Acta Arith. 21 (1972), 329--345.

\bibitem{Bel10} P. Bel, \textit{Fonctions L $p$-adiques et irrationalit{\'e}}, Ann. Sc. Norm. Super. Pisa Cl. Sci. (5) 9 (2010), no. 1, 189--227.

\bibitem{Bel19} P. Bel, \textit{Irrationalit{\'e} des valeurs de $\zeta_p(4,x)$}, J. Th{\'e}or. Nombres Bordeaux 31 (2019), no. 1, 81--99.

\bibitem{Beu08} F. Beukers, \textit{Irrationality of some $p$-adic $L$-values}, Acta Math. Sin. (Engl. Ser.) 24 (2008), no. 4, 663--686.

\bibitem{BGR84} S. Bosch, U. G{\"u}ntzer, and R. Remmert, \textit{Non-Archimedean Analysis: a Systematic Approach to Rigid Analytic Geometry}, Springer, Berlin, 1984.

\bibitem{BR01} K. Ball, T. Rivoal, \textit{Irrationalité d’une infinité de valeurs de la fonction zêta aux entiers impairs}. Invent. math. 146, 193–207 (2001).

\bibitem{Cal05} F. Calegari, \textit{Irrationality of certain $p$-adic periods for small $p$}, Int. Math. Res. Not. (2005), no. 20, 1235--1249.

\bibitem{CDT20} F. Calegari, V. Dimitrov and Y. Tang, \textit{$p$-adic Eisenstein series, arithmetic holonomicity criteria, and irrationality of the $2$-adic period $\zeta_2(5)$}, \url{https://people.maths.ox.ac.uk/newton/lnts/VDimitrov-LNT.pdf}, 2020.

\bibitem{Coh07} H. Cohen, \textit{Number Theory, Vol. II: Analytic and Modern Tools}, Grad. Texts in Math. 240, Springer, New York, 2007.

\bibitem{FR03} S. Fischler, T. Rivoal, \textit{Approximants de Padé et séries hypergéométriques équilibrées}, Journal de Mathématiques Pures et Appliquées, Volume 82, Issue 10, 2003.

\bibitem{Fis21} S. Fischler, \textit{Linear independence of odd zeta values using Siegel's lemma}, preprint  \href{https://arxiv.org/abs/2109.10136}{arXiv:2109.10136 [math.NT]}, 2021.

\bibitem{FSZ2019}  S. Fischler, J. Sprang, and W. Zudilin, \textit{Many odd zeta values are irrational}, Compos. Math. 155(5) (2019), 938--952.


\bibitem{Kob84} N. Koblitz, \textit{$p$-Adic Numbers, $p$-Adic Analysis, and Zeta-Functions}, Graduate Texts in Mathematics 58, Springer-Verlag, New York, 1984. 

\bibitem{Lai2023} L. Lai, \textit{On the irrationality of certain $2$-adic zeta values}, preprint arXiv:2304.00816 [math.NT], 2023.

\bibitem{LY2020} L. Lai and P. Yu, \textit{A note on the number of irrational odd zeta values}. Compos. Math. 156 (2020), no. 8, 1699--1717.

\bibitem{MV2006} H. L. Montgomery and R. C. Vaughan, \textit{Multiplicative number theory \uppercase\expandafter{\romannumeral1}}, Cambridge Studies in Advanced Mathematics, vol. 97 (Cambridge University Press, 2006).

\bibitem{Riv00} T. Rivoal, \textit{La fonction z{\^e}ta de Riemann prend une infinit{\'e} de valeurs irrationnelles aux entiers impairs}.  C. R. Acad. Sci. Paris S{\'e}r. I Math. 331 (2000), no. 4, 267--270.

\bibitem{Rob00} A. M. Robert, \textit{A course in p-adic analysis}, Graduate Texts in Mathematics 198, Springer-Verlag, New York, 2000.

\bibitem{Sch06} W. H. Schikhof, \textit{Ultrametric calculus}, Cambridge Studies in Advanced Mathematics, vol. 4, Cambridge
University Press, Cambridge, 2006, Reprint of the 1984 original.

\bibitem{Spr2020} J. Sprang, \textit{Linear independence result for $p$-adic $L$-values}, Duke Math. J. 169 (2020), no. 18, 3439--3476.

\bibitem{Was97} L. Washington, \textit{Introduction to cyclotomic fields}, Second edition, Graduate Texts in Mathematics 83, Springer, New York, 1997.


\bibitem{Zud2004} W. Zudilin, \textit{Arithmetic of linear forms involving odd zeta values}. J. Th{\'e}or. Nombres Bordeaux 16(1), 251--291 (2004).

\bibitem{Zud01} W. Zudilin, \textit{One of the numbers $\zeta(5)$, $\zeta(7)$, $\zeta(9)$, $\zeta(11)$ is irrational}, Uspekhi Mat. Nauk [Russian Math. Surveys] 56 (2001), 149--150 [774--776].

\bibitem{Zud18}  W. Zudilin, \textit{One of the odd zeta values from $\zeta(5)$ to $\zeta(25)$ is irrational. By elementary means}, SIGMA Symmetry Integrability Geom. Methods Appl. 14 (2018), no. 028.

\end{thebibliography}
\end{document}